\newcommand{\B}{\mathbb{B}}
\newcommand{\E}{\mathbb{E}} 
\newcommand{\Z}{\mathbb{Z}}
\newcommand{\R}{\mathbb{R}}
\newcommand{\SIR}{\mathsf{SIR}}
\DeclareMathOperator*{\esssup}{ess\,sup}
\def\eq{\begin{equation}}
\def\en{\end{equation}}
\newtheorem{Theorem}{Theorem}[section]
\newtheorem{Corollary}[Theorem]{Corollary}
\newtheorem{Lemma}[Theorem]{Lemma}
\newtheorem{Proposition}[Theorem]{Proposition}
\theoremstyle{definition}
\newtheorem{Remark}[Theorem]{Remark}
\def\rr{{\varrho}}
\def\e{{\varepsilon}}
\def\a{\alpha}
\def\b{\beta}
\def\e{\varepsilon}
\def\phi{\varphi}
\def\g{\gamma}
\def\la{\lambda}
\def\k{\kappa}
\def\r{\rho}
\def\de{\delta}
\def\s{\sigma}
\def\t{{\ttau}}
\def\L{\Lambda}
\def\G{\Gamma}
\def\P{{\Phi}}
\def\T{\T}
\def\BB{{\mc{B}}}
\def\BBB{{\mc{K}}}
\def\LL{{\mc L}}
\def\V|{{\Vert}}
\def\bb{{\bf{b}}}
\def\aa{{\bf{a}}}
\def\cc{{\bf{c}}}
\def\ttau{\boldsymbol{\tau}}
\def\nuu{{\bar\nu}}
\def\muu{{\bar\mu}}
\def\usc{\text{u.s.c.~}}
\def\lsc{\text{l.s.c.~}}
\def\d{{\rm d}}
\def\E{\mathbb{E}}
\def\I{\mathsf{I}}
\def\one{\mathds{1}}
\def\mc{\mathcal}
\def\ms{\mathsf}
\def\N{\mathbb{N}}
\def\P{\mathbb{P}}
\def\R{\mathbb{R}}
\def\Z{\mathbb{Z}}
\begin{document}
\author{Christian Hirsch, Benedikt Jahnel, Paul Keeler, Robert Patterson}
\thanks{Weierstrass Institute Berlin, Mohrenstr. 39, 10117 Berlin, Germany; E-mail: \{{\tt christian.hirsch}, {\tt benedikt.jahnel}, {\tt paul.keeler}, {\tt robert.patterson}\}{\tt @wias-berlin.de}.}

\title{Large deviations in relay-augmented wireless networks}

\date{\today}

\begin{abstract}
We analyze a model of relay-augmented cellular wireless networks. The network users, who move according to a general mobility model based on a Poisson point process of continuous trajectories in a bounded domain, try to communicate with a base station located at the origin. Messages can be sent either directly or indirectly by relaying over a second user. We show that in a scenario of an increasing number of users, the probability that an atypically high number of users experiences bad quality of service over a certain amount of time, decays at an exponential speed. This speed is characterized via a constrained entropy minimization problem.  
Further, we provide simulation results indicating that solutions of this problem are potentially non-unique due to symmetry breaking. Also two general sources for bad quality of service can be detected, which we refer to as isolation and screening.
\end{abstract}

\maketitle

\section{Model definition and main results}
\label{intSec}
In classical cellular networks users communicate directly with a base station over a wireless channel. This network paradigm has been the foundation of modern cellular telecommunication and, so far, electrical engineers have managed to adapt this model to new technological developments. However, as the growing number of user devices makes it increasingly difficult to provide adequate \textit{quality of service} (QoS) to all users within a certain cell, LTE-A is the first standard to allow for augmenting the classical cellular set-up by the concept of relays \cite{3GPPRelay}. That is, instead of communicating directly with a possibly distant base station, user devices can now connect to the base station indirectly by routing via a nearby relay. Hence, using relays allows for extension of the coverage area of the base station and for offloading traffic from direct connections. 

\medskip
In this paper, we investigate a probabilistic model for the effect of relaying in a single cell  in the asymptotic setting of a large number of mobile users. Note that this is different from the thermodynamic limit considered in~\cite{adHoc} where both, the number of users and the size of the domain, tend to infinity. For related work in various non-asymptotic settings, we refer the reader to~\cite{decrRel,m2mHop,sinrPerc,fogComp}. We assume the existence of a single base station located at the origin. 
The mobile users in the associated cell are given by a Poisson point process $X^\lambda$ of trajectories with intensity function $\lambda\muu(\cdot)$, where $\lambda>0$. We assume that the distribution of the initial points of trajectories is absolutely continuous with respect to the Lebesgue measure. 
Moreover, $\muu$ is assumed to be a finite Borel measure on  the set of Lipschitz-continuous trajectories $\LL=\LL_{J_1}({I},W)$, with Lipschitz parameter $J_1$, from the time interval $I=[0,T)$ to a window $W$. Here $\LL$ is equipped with the supremum norm and $W$ is of the form $W=[-r,r]^d$ for some integer $r\ge1$. For instance these conditions would be satisfied for a random-waypoint model with bounded velocities, as described in~\cite{adHoc}.

\medskip
For the network model, we follow the classical approach based on the \textit{signal-to-interference ratio} (SIR)~\cite{baccelli2009stochastic1}. To be more precise, we let $\ell:[0,\infty)\to(0,\infty)$ denote the \emph{path-loss function}, which is a Lipschitz-continuous function, with parameter $J_2$, that describes the decay of the signal strength over distance, hence it can be quite general as long as there is no singularity. Additionally, the ability of a receiver to decode a message is reduced by interference coming from other users. In the literature, it is often assumed that interference caused by relays can be neglected~\cite{relAss1,relAss2} since it is small when compared to the interference generated by actively transmitting users. In contrast, in this paper we consider a scenario where extensive relaying may occur and therefore we also take the relay-induced interference into account. Hence, our approach is related to the scenario considered in~\cite{relAss3}.
Moreover, let us mention, that in our model we do not consider any form of medium access control, which would attempt to reduce interference by coordinating the periods of active user transmissions. 
In other words, in our model, the interference is generated by \textit{all} users.

\medskip
To be more precise, at time $t\in{I}$ consider a fixed location $\eta\in W$. Then the interference at $\eta$ is given by 
$$\I(\eta,X^\lambda_t)=\sum_{X_{i}\in X^\lambda}\ell(|X_{i,t}-\eta|)$$
where $X_{i,t}$ denotes the $i$-th trajectory in $X^\la$ at time $t$.
Introducing the empirical measures
$$L_\lambda=\frac{1}{\lambda}\sum_{X_i\in X^{\lambda}}\delta_{X_i}\hspace{1cm}\text{ and }\hspace{1cm}L_{\la,t}=\frac{1}{\lambda}\sum_{X_i\in X^{\lambda}}\delta_{X_{i,t}}$$ 
respectively as a random element in $\mc{M}(\LL)$, the space of finite Borel measures on $\LL$, and in $\mc{M}(W)$, the space of finite Borel measures on $W$. We note that the interference $\I(\eta,X^\lambda)$ can be conveniently expressed as 
$$\I(\eta,X^\lambda_t)=\lambda L_{\la,t}(\ell(|\cdot-\eta|)).$$
Now, at time $t\in{I}$ we define the SIR of a transmitter at $\xi\in W$ and measured, at the same time, at a receiver position $\eta\in W$ as
$$\SIR_{\la}(\xi,\eta,L_{\lambda,t})=\frac{\ell(|\xi-\eta|)}{\la L_{\la,t}(\ell(|\cdot-\eta|))}.$$
Note that the denominator consists of the superposition of signal strengths coming from all network users, even if the transmission originates at some user $x=X_i\in X^\lambda$. This does not comply with the standard convention of omitting the signal of interest from the interference in the denominator. However, as we work in the limit where $\lambda$ tends to infinity, contributions from a finite number of users can be removed or added without influencing the final result. For the same reason, our model does not include noise.

\medskip
By Shannon's formula~\cite[Section 16]{baccelli2009stochastic2}, minimum data transmission rate requirements are equivalent to lower bounds on the SIR. That is, a connection between $\xi,\eta\in W$ is useful only if
$$\SIR_{\la}(\xi,\eta,L_{\lambda,t})=\frac{\ell(|\xi-\eta|)}{\la L_{\la,t}(\ell(|\cdot-\eta|))}\ge \r.$$
In particular, if $\r$ is of the form $\r=\lambda^{-1}\r'$, then the above requirement can be re-expressed as $\SIR(\xi,\eta,L_{\lambda,t})\ge \r'$, where $\SIR(\xi,\eta,L_{\lambda,t})=\la\SIR_{\la}(\xi,\eta,L_{\la,t})$. 

\medskip
This mathematical setting can model different types of telecommunication systems. First, it can be interpreted in the setting of machine-to-machine networks, where the number of devices is large but the amount of data in each transmission is small~\cite{m2mHop}. Hence, a comparatively small SIR threshold can be sufficient to transmit messages successfully. Second, our model can also be considered within a spread-spectrum setting with interference cancellation factor $\lambda^{-1}$. Hence, the limit $\lambda$ tending to infinity describes a scenario approaching perfect interference cancellation. We refer the reader to~\cite{sinrPerc,vazIyer,weberSIC,Keeler15} for further investigations of scenarios with substantial interference cancellation. 

In the following, we conduct level-2 large-deviation analysis of certain frustration events. In particular, we will see that the most likely option for a rare event to occur can be described by a certain finite Borel measure $\nu\in\mc{M}(W)$ that describes the asymptotic configuration of users under conditioning on the rare event.
Therefore, we extend the definition of SIR to arbitrary finite, positive Borel measures $\nu\in\mc{M}(W)$ and also write $$\SIR(\xi,\eta,\nu)=\frac{\ell(|\xi-\eta|)}{\nu(\ell(|\cdot-\eta|))}$$
for any $\xi,\eta\in W$.

\medskip
In order to keep the model flexible, we assume that the QoS of the direct link between $\xi$ and $\eta$ is given by
$$D(\xi,\eta,L_{\lambda,t})=g(\SIR(\xi,\eta,L_{\lambda,t})),$$
where $g:[0,\infty)\to[0,\infty)$ is a Lipschitz-continuous function which is strictly increasing on $[0,\r_+)$ and constant equal to $c_+$ on $[\r_+,\infty)$ for some $\r_+,c_+>0$.
As for the SIR we define $D$ for general $\nu\in\mc{M}(W)$ by $D(\xi,\eta,\nu)=g(\SIR(\xi,\eta,\nu))$. Moreover, we set $D(\xi,\eta,\nu)=c_+$ if $\nu(W)=0$.
For instance, possible choices of $g$ include $g(r)=\min\{r,K\}$ or, motivated by Shannon's capacity formula, $g(r)=\min\{\log(1+r),K\}$ for some fixed $K>0$.

\medskip
If, a message is sent out from a user $\xi$ to a user $\eta$ by routing via a relay at $\zeta$, then the quality of the relayed message transmission depends on both, the SIR from $\xi$ to $\zeta$ as well as on the SIR from $\zeta$ to $\eta$. We will assume that message transmissions are successful if the SIR of both links are above a certain threshold. In other words, we assume that the connection quality experienced when relaying from $\xi$ via $\zeta$ to $\eta$ can be expressed as 
$$\Gamma(\xi,\zeta,\eta,L_{\lambda,t})=\min\{D(\xi,\zeta,L_{\lambda,t}),D(\zeta,\eta,L_{\lambda,t})\}.$$
In Figure~\ref{modFig} we give a snap-shot illustration of this communication model using relays at time zero.
\begin{figure}[!htpb]
 \centering 
\begin{tikzpicture}[scale=2.0]
\fill[black] (3,3) circle (1.5pt);
\fill[red] (2.020,2.804) circle (0.8pt);
\fill[red] (3.296,4.495) circle (0.8pt);
\fill[red] (2.498,1.099) circle (0.8pt);
\fill[red] (1.052,1.185) circle (0.8pt);
\fill[red] (4.203,1.117) circle (0.8pt);
\fill[red] (0.803,3.531) circle (0.8pt);
\fill[red] (2.849,3.220) circle (0.8pt);
\fill[red] (2.261,3.491) circle (0.8pt);
\fill[red] (1.702,3.067) circle (0.8pt);
\fill[red] (0.993,4.293) circle (0.8pt);
\fill[red] (0.859,2.107) circle (0.8pt);
\fill[red] (2.279,2.432) circle (0.8pt);
\fill[red] (4.912,3.431) circle (0.8pt);
\fill[red] (4.418,1.599) circle (0.8pt);
\fill[red] (3.583,2.666) circle (0.8pt);
\fill[red] (1.752,4.332) circle (0.8pt);
\fill[red] (4.534,1.758) circle (0.8pt);
\fill[red] (1.574,0.987) circle (0.8pt);
\fill[red] (3.423,4.529) circle (0.8pt);
\fill[red] (3.502,1.996) circle (0.8pt);
\fill[red] (2.137,1.242) circle (0.8pt);
\fill[red] (4.253,2.455) circle (0.8pt);
\fill[red] (4.820,4.621) circle (0.8pt);
\fill[red] (1.043,4.836) circle (0.8pt);
\fill[red] (2.205,3.371) circle (0.8pt);
\fill[red] (3.699,3.074) circle (0.8pt);
\fill[red] (1.967,4.629) circle (0.8pt);
\fill[red] (4.802,0.662) circle (0.8pt);
\fill[red] (3.720,3.441) circle (0.8pt);
\fill[red] (5.485,4.327) circle (0.8pt);
\fill[red] (2.541,2.527) circle (0.8pt);
\fill[red] (0.857,5.155) circle (0.8pt);
\fill[red] (5.365,4.172) circle (0.8pt);
\fill[red] (2.083,3.176) circle (0.8pt);
\fill[red] (3.346,3.316) circle (0.8pt);
\draw[blue](2.020,2.804)-- (3,3);
\draw[blue](2.849,3.220)-- (3,3);
\draw[blue](2.261,3.491)-- (3,3);
\draw[blue](2.279,2.432)-- (3,3);
\draw[blue](3.583,2.666)-- (3,3);
\draw[blue](2.205,3.371)-- (3,3);
\draw[blue](3.699,3.074)-- (3,3);
\draw[blue](3.720,3.441)-- (3,3);
\draw[blue](2.541,2.527)-- (3,3);
\draw[blue](2.083,3.176)-- (3,3);
\draw[blue](3.346,3.316)-- (3,3);
\draw[dashed](3.296,4.495)-- (3.720,3.441);
\draw[dashed](3.296,4.495)-- (3.346,3.316);
\draw[dashed](1.702,3.067)-- (2.020,2.804);
\draw[dashed](1.702,3.067)-- (2.261,3.491);
\draw[dashed](1.702,3.067)-- (2.279,2.432);
\draw[dashed](1.702,3.067)-- (2.205,3.371);
\draw[dashed](1.702,3.067)-- (2.541,2.527);
\draw[dashed](1.702,3.067)-- (2.083,3.176);
\draw[dashed](4.912,3.431)-- (3.720,3.441);
\draw[dashed](1.752,4.332)-- (2.261,3.491);
\draw[dashed](1.752,4.332)-- (2.205,3.371);
\draw[dashed](3.423,4.529)-- (3.720,3.441);
\draw[dashed](3.502,1.996)-- (3.583,2.666);
\draw[dashed](3.502,1.996)-- (3.699,3.074);
\draw[dashed](3.502,1.996)-- (2.541,2.527);
\draw[dashed](4.253,2.455)-- (3.583,2.666);
\draw[dashed](4.253,2.455)-- (3.699,3.074);
\draw[dashed](4.253,2.455)-- (3.720,3.441);
\end{tikzpicture}
  \caption{Realization of the network model. Points connected to the origin via blue solid lines represent users with direct connection to the base station. Black dashed lines indicate possible connections of users that cannot directly communicate with the base station but can communicate with users with direct connection to the base station.}\label{modFig}  
\end{figure}
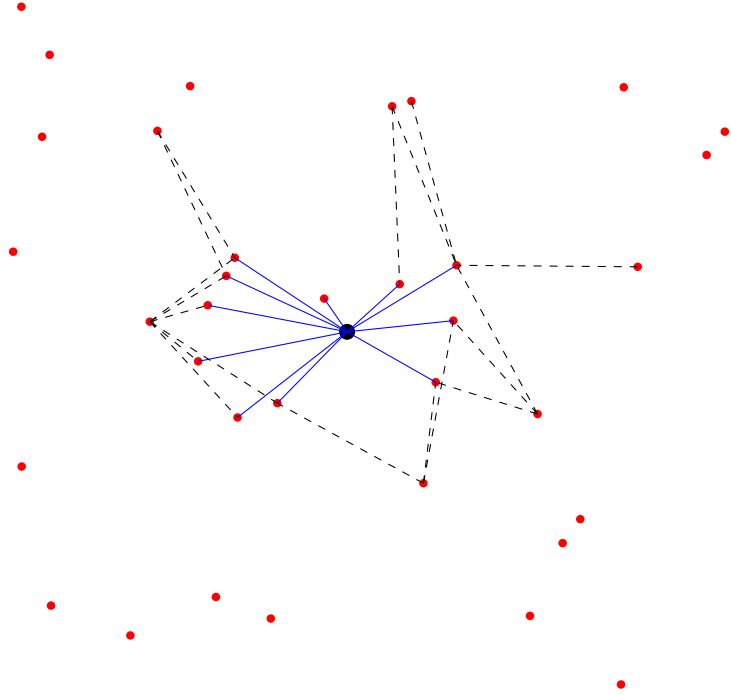

On the technical level of relays, the definition of $\G$ means that we consider full-duplex relaying. That is messages are sent and received over the same frequency channel. Although half-duplex relays are often used today, advances in techniques for canceling self-interference indicate that full-duplex relays will become an important component in fifth-generation networks~\cite{fullDuplex1}.

\medskip
In the following we introduce several characteristics that describe the QoS in a relay setting.

\subsection{Uplink and downlink quality of service}
In the uplink scenario, the destination of messages sent out from $X_i\in X^{\lambda}$ by routing via a relay $X_j\in X^{\lambda}$ is the origin $o$. 
Under an optimum relay decision, the QoS for the relayed uplink communication can be expressed as 
\begin{align}
\label{tnDef}
R(X_{i,t},o,L_{\lambda,t})=\max\{D(X_{i,t},o,L_{\lambda,t}),\max_{X_j\in X^{\lambda}}\Gamma(X_{i,t},X_{j,t},o,L_{\lambda,t})\}
\end{align}
In other words in \eqref{tnDef}, the user $X_i$ has the possibility to try to connect to the base station also directly. However, if there is any other user $X_j$ such that relaying via $X_j$ offers a better connection, then relaying leads to a higher QoS. A similar criterion has also been suggested in the engineering literature, see~\cite{bletsas1,bletsas2,relAss2}.

\medskip
Similarly, if a message is sent out from the origin $o$ to a user $X_i\in X^{\lambda}$, at time $t$, by routing via a relay $X_j\in X^{\lambda}$, then the quality of the relayed message transmission depends on both
$\SIR(o,X_{j,t},L_{\la,t})$ and $\SIR(X_{j,t},X_{i,t},L_{\la,t})$ via $\G(o,X_{j,t},X_{i,t},L_{\la,t})$.
Assuming an optimum relay decision, the QoS for the relayed downlink communication can then be expressed as 
\begin{align}
\label{tnDefDown}
R(o,X_{i,t},L_{\la,t})=\max\{D(o,X_{i,t},L_{\la,t}),\max_{X_{j}\in X^{\lambda}}\G(o,X_{j,t},X_{i,t},L_{\la,t})\}.
\end{align}

\medskip

We can further extend the definition of $R$ to arbitrary finite Borel measures $\nu\in\mc{M}(W)$ and write 
\begin{align*}
R(\xi,\eta,\nu)=\max\{D(\xi,\eta,\nu),\nu\text{-}\esssup_{\zeta\in W}\Gamma(\xi,\zeta,\eta,\nu)\}
\end{align*}
for any given $\xi,\eta\in W$. Here $\nu\text{-}\esssup$ denotes the essential supremum w.r.t.~$\nu$. 
Let $\pi_t: \LL\to W, \, x\mapsto x_t$ denote the projection at time $t\in{I}$, then for the trajectory of QoS, i.e.~for $\nuu\in\mc{M}(\LL)$ and $x,y\in\LL$ we define
\begin{align*}
\bar R(x,y,\nuu)=(R(x_t,y_t,\nuu_t))_{t\in I}
\end{align*}
where $\nuu_t=\nu\circ\pi_t^{-1}$ and similarly $\overline\SIR$ and $\bar D$. Note that $\bar R, \overline\SIR$ and $\bar D$ are elements of the space of bounded measurable functions $\BB=\BB(I,[0,\infty))$ equipped with the supremum norm and the associated Borel sigma field. 
We will show in Lemmas~\ref{PathRegularities} and \ref{PathRegularities2} that the path $t\mapsto R(x_t,o,\nuu_t)$ is continuous and also the map $x\mapsto\bar R(x,o,\nuu)$ is continuous.

\subsection{Statement of results}
The point processes of users that are frustrated due to failing to attain certain types of QoS is the principal object of investigation of this paper. 
For the uplink, the rescaled random measure associated with this point process is defined as
$$L_{\lambda}^{\ms{up}}[\tau]=\frac1\lambda\sum_{X_{j}\in X^{\lambda}}\delta_{X_{j}}\tau(\bar R(X_{j},o,L_{\la}))$$
where $\tau:\, \BB\to[0,\infty)$ is a bounded and measurable function.
In particular $L_{\lambda}^{\ms{up}}[\tau]\in\mc{M}(\LL)$.
More generally, if $\nuu\in\mc{M}(\LL)$ then $\nuu^{\ms{up}}[\tau]$ is defined as a measure in $\mc{M}(\LL)$ via 
$$\frac{\d\nuu^{\ms{up}}[\tau]}{\d\nuu}(x)=\tau(\bar R(x,o,\nuu)).$$

\medskip
Note that $\nuu^{\ms{up}}_t[\tau]$ does not suffice to describe traffic overflow arising from a large number of users communicating via a small number of relays. In practice, users communicating via the same relay have to share its bandwidth. Consequently, even if a user has good QoS but no direct connection to the base station, communication at full bandwidth cannot be guaranteed. In other words, the system can have many connected users but still suffer from small throughput. In that sense, also the random measure of users that have bad QoS, with respect to direct communication with the base station
$$L_{\lambda}^{\ms{up-dir}}[\tau]=\frac1\lambda\sum_{X_j\in X^{\lambda}}\delta_{X_j}\tau(\bar D(X_{j},o,L_{\la}))$$ 
is an important quantity. 
Again for $\nuu\in\mc{M}(\LL)$, $\nuu^{\ms{up-dir}}[\tau]$ is defined via
$$\frac{\d\nuu^{\ms{up-dir}}[\tau]}{\d\nuu}(x)=\tau(\bar D(x,o,\nuu)).$$
For the downlink we define 
$$\frac{\d\nuu^{\ms{do}}[\tau]}{\d\nuu}(x)=\tau(\bar R(o,x,\nuu)).$$
and analogously for $\nuu^{\ms{do-dir}}[\tau]$.
Since our main theorem will be about large deviations of all the four above quantities, let us introduce the following short hand notation
$$L_{\la}[\ttau]=\big(L_{\la}^{\ms{up}}[\tau_1],L_{\la}^{\ms{up-dir}}[\tau_2],L_{\la}^{\ms{do}}[\tau_3],L_{\la}^{\ms{do-dir}}[\tau_4]\big)$$
and, more generally,
\begin{align}
\label{nuTauDef}
\nuu[\ttau]=\big(\nuu^{\ms{up}}[\tau_1],\nuu^{\ms{up-dir}}[\tau_2],\nuu^{\ms{do}}[\tau_3],\nuu^{\ms{do-dir}}[\tau_4]\big)
\end{align}
where $\ttau=(\tau_i)_{i\in\{1,\dots,4\}}$.

\medskip
Let $\BBB=\BBB(I,W)$ denote the space of measurable trajectories with values in $W$, equipped with the supremum norm.
We are interested in random variables $F(L_\la[\ttau])$ where  $F:\, \mc{M}(\BBB)^4\to[-\infty,\infty)$ and $\tau_i:\, \BB\to[0,\infty)$, $i\in\{1,\dots,4\}$, exhibit some appropriate monotonicity properties. More precisely, 
$\tau_i$ is assumed to be a decreasing in the sense that for all $\g,\g'\in \BB$ with $\g_t\le\g'_t$ for all $t\in I$ we have $\tau(\g)\ge\tau(\g')$. Moreover, $F$ is assumed to be increasing in the sense that for all $\nuu,\nuu'\in\mc{M}(\BBB)$ 
with $\nuu\le\nuu'$ we have $F(\nuu)\le F(\nuu')$. Here we write $\nuu\le\nuu'$ if $\nuu(A)\le\nuu'(A)$ for all measurable $A\subset\BBB$. 
We also put $\nuu<\nuu'$ if $\nuu\le\nuu'$ and $\nuu\ne\nuu'$. 

\medskip
For example, consider the measurable functions $F_\bb:\mc{M}(\BBB)^4\to[-\infty,\infty)$,
\begin{align}
\label{ExF}
(\nuu_i)_{i\in\{1,\dots,4\}}\mapsto
\begin{cases}
0& \text{if } \nuu_i(\BBB)> b_i\text{ for all }i\in\{1,\dots,4\}\\
-\infty& \text{otherwise }
\end{cases}
\end{align}
for some $\bb\in\R^4$ 
and $\tau_{a,c}:\, \BB\to[0,\infty)$, 
\begin{align}
\label{ExF2}
\g\mapsto
\begin{cases}
1& \text{if } \int_0^T\one\{\g_t<c\}\d t> a,\\
0& \text{otherwise. }
\end{cases}
\end{align}
Note that $\tau_{a,c}$ is measurable. In particular, for $\ttau_{\aa,\cc}=(\tau_{a_i,c_i})_{i\in\{1,\dots,4\}}$,
$$\E\exp(\la F_\bb(L_\lambda[\ttau_{\aa,\cc}]))=\P(L_\lambda[\ttau_{\aa,\cc}](\LL)>\bb),$$
where we put $\aa<\bb$ for vectors $\aa=(a_1,\ldots,a_4)$, $\bb=(b_1,\ldots,b_4)\in\R^4$ if $a_i< b_i$ for all $i\in\{1,\ldots,4\}$. 
This describes the probability that 
more than $\lambda b_i$ users experience a quality of connection 
of at most $c_i$ for a period of time of more than $a_i$ for all $i\in\{1,\dots,4\}$.

\medskip
In the following, it will be convenient to consider functions $F:\, \mc{M}(\BBB)^4\to[-\infty,\infty)$ that are compatible with suitable discretizations of $\BBB$. To be more precise, we work with triadic discretizations of $W$ and ${I}$ and therefore introduce the sets $\mathbb{B}=\{3^{-m}:\,m\ge1\}$. A triadic discretization is chosen to ensure that spatially the origin is at the center of a sub-cube and that $W$ is a union of sub-cubes of the form $\L_\delta(\zeta)=\zeta+[-\delta r,\delta r]^d$ with $\zeta\in\delta 2 r\Z^d$ and $\de\in\mathbb{B}$. The space and time discretizations are given by 
$$W_\delta=\delta 2 r\Z^d\cap W\hspace{1cm}\text{and}\hspace{1cm}I_\de=\de T(\Z+\tfrac{1}{2})\cap I.$$
Now consider two operations relating the discretized path space $\Pi_\de=W_\de^{I_\de}$ of functions mapping from $I_\de$ to $W_\de$ to the continuous space $\BBB$.
Note that $\mc{M}(\Pi_\de)$ can be identified with $[0,\infty)^{\Pi_\de}$.
We discretize $x\in\BBB$ by evaluating $x$ at discrete times in $I_\de$ and spatially moving $x$ to the centers of sub-cubes, i.e.~let us denote the discretized path $\rr(x)\in\Pi_\de$ by 
$$\rr: \BBB\to\Pi_\de\hspace{1cm}x\mapsto(\rr(x_t))_{t\in I_\de}$$
where $\rr(x_{t})$ denotes the shift of $x_t\in W$ to its nearest sub-cube center in $W_\de$. 
For $\nuu\in\mc{M}(\BBB)$, the mappings $\rr$ also induces an image measure, which we will denote by 
\begin{align}
\label{discMeasure}
\nuu^\rr=\nuu\circ\rr^{-1}\in\mc{M}(\Pi_\de).
\end{align}

Second, we can embed a discretized path $z\in \Pi_\de$ as a step function into $\BBB$. That is, 
\begin{align}
\label{Iota}
\imath: \Pi_\de\to\BBB\hspace{1cm}u\mapsto\Big(\sum\nolimits_{i=0}^{\de^{-1}T-1}\one_{[i\de T,(i+1)\de T)}(t)u_{\tfrac{2i+1}{2}\de T}\Big)_{t\in I_\de}.
\end{align}
Again, for $\nuu\in\mc{M}(\Pi_\de)$, the mapping $\imath$ induces an image measure, which we will denote by 
$$\nuu^\imath=\nuu\circ\imath^{-1}\in\mc{M}(\BBB).$$

\medskip
For $\delta\in\mathbb{B}$ we say that a function $F:\, \mc{M}(\BBB)^4\to[-\infty,\infty)$ is \emph{$\delta$-discretized} if $F((\nuu^\rr)^\imath)=F(\nuu)$ holds for all $\nuu\in\mc{M}(\BBB)^4$. For instance, the functions $F(\nuu)$ as defined in \eqref{ExF} are $\delta$-discretized for every $\delta\in\mathbb{B}$.

\medskip
Our main result is a large deviation analysis of the quantities $F(L_\lambda[\ttau])$. Large-deviation results in the context of wireless networks have already been considered in literature~\cite{ldpInt,isSir}, but to the best of our knowledge our work is the first incorporating both mobility as well as relaying.
Since we obtain a level-2 large deviation result, the relative entropy plays an important r\^ole. For $\nuu,\nuu'\in \mc{M}(\LL)$ the relative entropy is defined by
$$h(\nuu|\nuu')=\int f(x)\log f(x)\nuu'(\d x)-\nuu(\LL)+\nuu'(\LL)$$
if the density $\d\nuu/d\nuu'=f$ exists and $h(\nuu|\nuu')=\infty$ otherwise. 
Let us write $\usc$for upper semicontinuous and $\lsc$for lower semicontinuous. 

\begin{Theorem}
\label{LDP}
Let $\tau_i:\, \BB\to[0,\infty)$, for $i\in\{1,\dots,4\}$, be bounded, measurable and decreasing functions that map trajectories $\g$ to zero if $\g_t\ge c_+$ for all $t\in I$. Further, let $F:\, \mc{M}(\BBB)^4\to[-\infty,\infty)$ be an increasing function that is $\delta$-discretized for some $\delta\in\mathbb{B}$, bounded from above, and maps the vector of zero measures to $-\infty$. If the $\tau_i\circ\imath$ are u.s.c.~as functions on $[0,\infty)^{I_\de}$ and $\nuu\mapsto F(\nuu^\imath)$ is u.s.c.~as a function on $\mc{M}(\Pi_\de)^4$, then
$$\limsup_{\lambda\to\infty}\frac1\lambda\log\E\exp(\la F(L_{\lambda}[\ttau]))\le-\inf_{\nuu\in\mc{M}(\LL)}\big\{h(\nuu|\muu)-F(\nuu[\ttau])\big\},$$
whereas if the $\tau_i\circ\imath$ are l.s.c.~as functions on $[0,\infty)^{I_\de}$ and $\nuu\mapsto F(\nuu^\imath)$ is l.s.c.~as a function on $\mc{M}(\Pi_\de)^4$, then
$$\liminf_{\lambda\to\infty}\frac1\lambda\log\E\exp(\la F(L_{\lambda}[\ttau]))\ge-\inf_{\nuu\in\mc{M}(\LL)}\big\{h(\nuu|\muu)-F(\nuu[\ttau])\big\}.$$
\end{Theorem}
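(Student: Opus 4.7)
The plan is to derive both halves of the theorem from Varadhan's lemma applied to a Sanov-type large-deviation principle for $L_\la$. Under the finiteness hypothesis on $\muu$, the empirical measure $L_\la$ should satisfy an LDP in $\mc{M}(\LL)$, equipped with the weak topology, with speed $\la$ and good rate function $h(\cdot|\muu)$. Because $F$ is bounded from above and the $\tau_i$ are bounded, Varadhan's lemma will then give
\begin{equation*}
\limsup_{\la\to\infty}\tfrac1\la\log\mean\exp(\la F(L_\la[\ttau])) \le -\inf_{\nuu\in\mc{M}(\LL)}\{h(\nuu|\muu)-F(\nuu[\ttau])\},
\end{equation*}
as soon as $\Psi(\nuu):=F(\nuu[\ttau])$ is upper semicontinuous on $\mc{M}(\LL)$, and the matching lower bound as soon as $\Psi$ is lower semicontinuous. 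The whole proof thus reduces to verifying the appropriate one-sided semicontinuity of $\Psi$.

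The $\delta$-discretization hypothesis is what should make this feasible. Writing $\Phi_\delta:\nuu\mapsto(\nuu[\ttau])^\rr$ for the image-measure map into $\mc{M}(\Pi_\delta)^4$, the discretization identity together with \eqref{Iota} yields $F(\nuu[\ttau])=F(((\nuu[\ttau])^\rr)^\imath)$, so $\Psi$ factors through $\Phi_\delta$ followed by the map $\mc{M}(\Pi_\delta)^4\to[-\infty,\infty)$ that carries the required semicontinuity by assumption. Since $\Phi_\delta$ takes values in the finite-dimensional cone $[0,\infty)^{\Pi_\delta\times 4}$, it will suffice to check continuity of each of its scalar coordinates. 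Concretely, the uplink coordinate indexed by $u\in\Pi_\delta$ reads $\int_{\rr^{-1}(\{u\})}\tau_1(\bar R(x,o,\nuu))\,\nuu(\d x)$, with analogous expressions for the remaining three slots. Since $\ell$ is Lipschitz and bounded on $W$, the interference functional $\nuu\mapsto\nuu(\ell(|\cdot-\eta|))$ is weakly continuous in $\nuu$, and consequently $\SIR$, $D$ and $\Gamma$ are jointly continuous in all their arguments; together with Lemmas~\ref{PathRegularities} and~\ref{PathRegularities2} this handles every piece except the $\nuu$-essential supremum inside $\bar R$, which is not continuous in $\nuu$.

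The monotonicity hypotheses are then used precisely to sandwich around this discontinuity. For the upper semicontinuous half, I would approximate the $\nuu$-esssup of $\Gamma$ from above by the $L^p(\nuu)$-norms $\bigl(\int \Gamma^p\,\d\nuu\bigr)^{1/p}$ as $p\to\infty$: these are weakly continuous in $\nuu$, and since the $\tau_i$ are decreasing and $F$ is increasing, monotone convergence produces a decreasing sequence of upper approximations of $F(\nuu[\ttau])$ that transfers upper semicontinuity in the limit. For the lower semicontinuous half, I would approach the $\nuu$-esssup from below by suprema over countable dense families of test relays, using Portmanteau applied to the open sets $\{\Gamma(\xi,\cdot,\eta,\nuu)>s\}$ to obtain lower semicontinuity of such set charges under weak convergence; the monotonicity again transfers the bound in the correct direction. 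Combining these two approximations with the finite-dimensional reduction above yields the required semicontinuity of $\Psi$, and Varadhan's lemma then closes the argument. I expect the main obstacle to be precisely this control of the $\nuu$-esssup inside $\bar R$ through the monotonicity-and-approximation sandwich; everything else, including the finite-dimensional contraction through the $\delta$-discretization and the identification of the rate function, should be routine once that piece is in place.
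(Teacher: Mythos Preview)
Your monotonicity bookkeeping is reversed, and this hides a genuine obstruction. Trace it through once: if $R_p\ge R$ (esssup approximated from above), then $\tau_i(R_p)\le\tau_i(R)$ because $\tau_i$ is decreasing, hence $\nuu[\ttau_p]\le\nuu[\ttau]$, hence $F(\nuu[\ttau_p])\le F(\nuu[\ttau])$ because $F$ is increasing. So an upper approximation of the esssup yields a \emph{lower} approximation of $\Psi$, and a pointwise supremum of continuous functions is l.s.c., not u.s.c. Your two approximation schemes are thus assigned to the wrong halves of the theorem. (Separately, for a general finite measure the $L^p$ norms neither dominate the essential supremum nor converge to it monotonically, so that particular device is unreliable here.)

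Once the directions are corrected, the upper bound becomes the tractable half---one can indeed exploit that $\nuu\mapsto R(\xi,\eta,\nuu)$ is lower semicontinuous, push this through decreasing $\tau_i$ and increasing $F$ to get $\Psi$ u.s.c., and then apply Varadhan; the paper does essentially this, but only after first replacing $L_\lambda$ by the finite-dimensional $L_\lambda^\rr$ via the comparison $(\nuu[\ttau])^\rr\le((1+\varepsilon)\nuu^\rr)[\ttau\circ\imath]$, so that the u.s.c.\ check happens on $\mc{M}(\Pi_\delta)$ rather than on $\mc{M}(\LL)$. The lower bound, however, is where your plan breaks: it would require $\Psi$ to be l.s.c., which in turn requires $\nuu\mapsto R(\xi,\eta,\nuu)$ to be \emph{upper} semicontinuous, and it is not. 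If $\nuu_n$ carries mass at a good relay site that disappears in the limit $\nuu_*$, the essential supremum drops discontinuously, so $\limsup R(\cdot,\nuu_n)>R(\cdot,\nuu_*)$. There is no continuous upper approximation of the esssup that converges down to it, so no monotone sandwich will produce l.s.c.\ of $\Psi$. The paper circumvents this not by proving semicontinuity at all, but by a sprinkling argument: conditioning on a slightly richer process $X^{\lambda'}_\delta$ with $\lambda'=(1+O(\varepsilon_0))\lambda$, one shows that at sub-exponential cost every occupied site already carries $\ge\varepsilon_0\lambda$ users, on which event the discontinuity is invisible and the continuous surrogate $\nuu[\ttau,\varepsilon_0]$ coincides with $\nuu[\ttau]$. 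This probabilistic step is the missing idea in your outline; the paper is explicit that exponential approximation in the sense of Dembo--Zeitouni does not suffice here.
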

Let us note that the semicontinuity properties of $\nuu\mapsto F(\nuu^\imath)$ and $\tau_i\circ\imath$ can be checked on finite-dimensional spaces due to our discretization assumption. This is much simpler than considering $F$ and $\tau_i$ on their infinite-dimensional domains. 

\medskip
As a special case of Theorem \ref{LDP} we obtain the rate of decay for the frustration probabilities $\P(L_{\lambda}[\ttau_{\aa,\cc}](\LL)>\bb)$ where $\ttau_{\aa,\cc}$ is defined as in \eqref{ExF2}. Furthermore, we put $[0,{\bf T})=[0,T)^4$ and $[0,\cc_+)=[0,c_+)^4$. 
\begin{Corollary}\label{Cor1}
Let $\aa\in[0,{\bf T})$, $\bb\in\R^4$ and $\cc\in[0,\cc_+)$. Then,
$$\lim_{\lambda\to\infty}\frac1\lambda\log\P(L_{\lambda}[\ttau_{\aa,\cc}](\LL)>\bb)=-\inf_{\nuu:\,\nuu[\ttau_{\aa,\cc}](\LL)> \bb}h(\nuu|\muu).$$
\end{Corollary}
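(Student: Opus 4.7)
The plan is to specialize Theorem~\ref{LDP} to $F = F_\bb$ from \eqref{ExF} and $\ttau = \ttau_{\aa,\cc}$ from \eqref{ExF2}, using the identity $\E\exp(\la F_\bb(L_\la[\ttau_{\aa,\cc}])) = \P(L_\la[\ttau_{\aa,\cc}](\LL) > \bb)$. The structural hypotheses of Theorem~\ref{LDP} are routine: $\tau_{a,c}$ is bounded by $1$, measurable, decreasing, and vanishes on trajectories above $c_+$ because $c < c_+$; the functional $F_\bb$ is increasing, bounded above, sends the zero measure to $-\infty$, and is $\delta$-discretized for every $\delta \in \B$ since it depends only on total masses, which $\rr$ and $\imath$ preserve. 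For the lower bound I observe that the strict-inequality formulations are lower semicontinuous after discretization: $\tau_{a,c}(\imath(u))$ is the indicator of the open superlevel set $\{u : \delta T\,|\{i : u_{t_i} < c\}| > a\}$ in $[0,\infty)^{I_\delta}$ of an integer-valued l.s.c.\ count, and $\nuu \mapsto F_\bb(\nuu^\imath)$ is the indicator of the open set $\{\nuu : \nuu_i(\Pi_\delta) > b_i \ \text{for all } i\}$ in $\mc{M}(\Pi_\delta)^4$. The lower bound half of Theorem~\ref{LDP} then yields
$$\liminf_\la \la^{-1} \log \P(L_\la[\ttau_{\aa,\cc}](\LL) > \bb) \ge -\inf_{\nuu : \nuu[\ttau_{\aa,\cc}](\LL) > \bb} h(\nuu|\muu).$$

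For the matching upper bound I will pass to upper semicontinuous majorants. Setting $\tilde\tau_{a,c}(\g) = \one\{\int_0^T \one\{\g_t \le c\} dt > a\}$ and $\tilde F_\bb(\nuu) = 0$ on $\{\nuu : \nuu_i(\BBB) \ge b_i \ \text{for all } i\}$ (and $-\infty$ elsewhere), one has $\tilde\tau \ge \tau$ and $\tilde F_\bb \ge F_\bb$ pointwise, so monotonicity of $F_\bb$ gives $\P(L_\la[\ttau_{\aa,\cc}](\LL) > \bb) \le \E\exp(\la \tilde F_\bb(L_\la[\tilde\ttau_{\aa,\cc}]))$. Relaxing open-set indicators to closed-set ones in the discretization flips l.s.c.\ to u.s.c., while $\tilde\tau_{a,c}$ still sends $c_+$-trajectories to zero because $c < c_+$ is strict. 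The upper bound half of Theorem~\ref{LDP} then yields
$$\limsup_\la \la^{-1} \log \P(L_\la[\ttau_{\aa,\cc}](\LL) > \bb) \le -\inf_{\nuu : \nuu[\tilde\ttau_{\aa,\cc}](\LL) \ge \bb} h(\nuu|\muu).$$

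The main obstacle will be to close the gap between the strict and non-strict formulations, namely to show that the infimum on the right coincides with $\inf_{\nuu : \nuu[\ttau_{\aa,\cc}](\LL) > \bb} h(\nuu|\muu)$. The inclusion $\{\nuu : \nuu[\ttau_{\aa,\cc}](\LL) > \bb\} \subset \{\nuu : \nuu[\tilde\ttau_{\aa,\cc}](\LL) \ge \bb\}$ gives one direction immediately. For the other, given any candidate $\nuu$ in the larger set with $h(\nuu|\muu) < \infty$, hence $\nuu \ll \muu$, I plan to use a Fubini argument exploiting the absolute continuity of initial positions in $\muu$ to show that $\{t : \bar R(x,o,\nuu)_t = c_i\}$ has Lebesgue measure zero for $\nuu$-a.e.\ $x$, yielding $\nuu[\tilde\ttau_{\aa,\cc}] = \nuu[\ttau_{\aa,\cc}]$. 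The perturbation $\nuu^\delta = (1+\delta)\nuu$ then uniformly inflates interference, forcing $\bar R(\cdot,o,\nuu^\delta) \le \bar R(\cdot,o,\nuu)$ pointwise and, through the decreasing $\ttau_{\aa,\cc}$, $\nuu^\delta[\ttau_{\aa,\cc}](\LL) \ge (1+\delta)\,\nuu[\ttau_{\aa,\cc}](\LL) > \bb$ in the non-degenerate coordinates. A direct computation shows $h(\nuu^\delta|\muu) = (1+\delta) h(\nuu|\muu) + (1+\delta)\log(1+\delta)\,\nuu(\LL) - \delta\,\muu(\LL) \to h(\nuu|\muu)$ as $\delta \downarrow 0$, closing the gap and delivering Corollary~\ref{Cor1}.
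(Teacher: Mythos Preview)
Your lower bound is correct and is exactly what the paper does. The upper bound, however, has two problems.

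\medskip
\textbf{The Fubini step is unsound for the relayed channels.} You want $\{\xi\in W:\,R(\xi,o,\nuu_t)=c_i\}$ to have $\nuu_t$-measure zero for a.e.\ $t$. For the direct channels this is plausible (up to assumptions on $\ell$ not made in the paper), but for the relayed uplink it fails: if some relay location $\zeta$ in the $\nuu_t$-support has $D(\zeta,o,\nuu_t)=c_i$, then every $\xi$ in the open region $\{\xi:\,D(\xi,\zeta,\nuu_t)>c_i\}$ has $\Gamma(\xi,\zeta,o,\nuu_t)=c_i$, and whenever this is the best available relay one gets $R(\xi,o,\nuu_t)=c_i$ on a set of positive Lebesgue measure. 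So the equality $\nuu[\tilde\ttau]=\nuu[\ttau]$ cannot be obtained this way. Fortunately the Fubini step is also \emph{unnecessary} when $b_i>0$: your scaling argument already gives
\[
((1+\delta)\nuu)[\ttau_{\aa,\cc}](\LL)_i\;\ge\;(1+\delta)\,\nuu[\tilde\ttau_{\aa,\cc}](\LL)_i\;\ge\;(1+\delta)b_i\;>\;b_i,
\]
because scaling strictly pushes every $D$ below $c_i$ whenever it was $\le c_i<c_+$, hence $\{\bar R\le c_i\}\subset\{\bar R((1+\delta)\cdot)<c_i\}$ uniformly.

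\medskip
\textbf{The case $b_i=0$ is a genuine gap.} When $b_i=0$ your u.s.c.\ majorant $\tilde F_\bb$ imposes the constraint $\nuu_i(\BBB)\ge0$, which is vacuous; the $i$-th constraint is simply dropped from the upper bound you obtain from Theorem~\ref{LDP}. No amount of scaling recovers it: from $\nuu[\tilde\ttau](\LL)_i\ge0$ you cannot conclude $((1+\delta)\nuu)[\ttau](\LL)_i>0$. Moreover, if all $b_i\le0$ then $\tilde F_\bb$ fails the hypothesis of Theorem~\ref{LDP} that the zero measure be sent to $-\infty$. This is precisely why the paper does \emph{not} derive the upper bound from Theorem~\ref{LDP}. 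Instead it proves a separate discretized upper bound (Proposition~\ref{translLowThm}) by a second sprinkling argument: for coordinates with $b_i=0$ it replaces the naive closed relaxation by the approximate ``worst-QoS'' functional $\boldsymbol\Phi_\e$, which is u.s.c.\ and yet non-trivially encodes the constraint ``some user is frustrated''. The passage back to the continuum then uses Lemma~\ref{rateFunLim}. Your route is viable only after restricting to $\bb>0$; for the full statement with $\bb\in\R^4$ you need an additional idea along the lines of the paper's $\boldsymbol\Phi_\e$.
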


Finally, we provide a formalization of the observation that the probability of unlikely frustration events decays at an exponential speed. 

\begin{Corollary}
\label{Cor2}
Let $\aa\in[0,{\bf T})$, $\bb\in\R^4$, $\cc\in[0,\cc_+)$ and assume that $((1+\e)\muu)[\ttau_{\aa,\cc}](\LL)\le\bb$ for some $\e>0$. Then,
$$\limsup_{\lambda\to\infty}\frac1\lambda\log\P(L_\lambda[\ttau_{\aa,\cc}](\LL)>\bb)<0.$$
\end{Corollary}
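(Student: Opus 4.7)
\medskip
\noindent\textbf{Proof plan for Corollary~\ref{Cor2}.} I would apply Corollary~\ref{Cor1} directly to recast
\begin{equation*}
\limsup_{\lambda\to\infty}\tfrac1\lambda\log\P(L_\la[\ttau_{\aa,\cc}](\LL)>\bb)=-I^*,\qquad I^*:=\inf_{\nuu:\,\nuu[\ttau_{\aa,\cc}](\LL)>\bb}h(\nuu|\muu),
\end{equation*}
so the task reduces to showing $I^*>0$. Writing $\Psi_i(\nuu):=\nuu[\tau_{a_i,c_i}](\LL)=\int\tau_{a_i,c_i}(\bar R(x,o,\nuu))\,\nuu(\d x)$, the essential step is to rule out $\muu$ as a limit of the constraint set and then invoke compactness of the level sets of $h$.

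\medskip
\emph{Step 1: strict infeasibility of $\muu$.} A key observation is that $\muu$ and $(1+\e)\muu$ have the same support, hence the $\nuu$-essential supremum defining $R$ ranges over the same set of potential relay positions in both cases. Since the interference is rescaled by the factor $1+\e$, one sees that $\SIR$, $D$, $\G$, and hence $\bar R$, are pointwise no larger under $(1+\e)\muu$ than under $\muu$. Using that $\tau_{a_i,c_i}$ is non-negative and decreasing,
\begin{equation*}
\Psi_i((1+\e)\muu)=\int\tau_{a_i,c_i}(\bar R(x,o,(1+\e)\muu))(1+\e)\muu(\d x)\ge(1+\e)\int\tau_{a_i,c_i}(\bar R(x,o,\muu))\muu(\d x)=(1+\e)\Psi_i(\muu).
\end{equation*}
Combined with the hypothesis $\Psi_i((1+\e)\muu)\le b_i$, this gives $\Psi_i(\muu)\le b_i/(1+\e)<b_i$ for every $i$, so $\muu$ lies strictly outside the constraint set.

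\medskip
\emph{Step 2: compactness.} Suppose by contradiction that $I^*=0$ and select $\nuu_n$ with $h(\nuu_n|\muu)\to0$ and $\Psi_i(\nuu_n)>b_i$ for every $i$. Relative entropy against $\muu$ is a good rate function on $\mc{M}(\LL)$, so its sublevel sets are weakly compact; passing to a subsequence, $\nuu_n\to\nuu_*$ weakly, and lower semicontinuity of $h$ forces $h(\nuu_*|\muu)=0$, i.e.\ $\nuu_*=\muu$. Pinsker's inequality for finite positive measures then upgrades this to $\|\nuu_n-\muu\|_{\mathrm{TV}}\to0$.

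\medskip
\emph{Step 3: continuity closes the contradiction.} Total variation convergence delivers, uniformly in $\eta\in W$,
\begin{equation*}
|\nuu_n(\ell(|\cdot-\eta|))-\muu(\ell(|\cdot-\eta|))|\le\|\ell\|_{L^\infty(W)}\,\|\nuu_n-\muu\|_{\mathrm{TV}}\to0,
\end{equation*}
which, through the Lipschitz dependence of $\SIR$, $D$ and $\G$ on the interference together with Lemmas~\ref{PathRegularities}--\ref{PathRegularities2}, propagates to uniform convergence $\bar R(x,o,\nuu_n)\to\bar R(x,o,\muu)$ in $x$. The main obstacle is the indicator nature of $\tau_{a,c}$; I would handle it by sandwiching $\tau_{a_i,c_i}$ between the perturbations $\tau_{a_i-\sigma,c_i+\sigma}\ge\tau_{a_i,c_i}\ge\tau_{a_i+\sigma,c_i-\sigma}$ and using that for large $n$ (depending on $\sigma$), uniform closeness of $\bar R$ implies pointwise domination $\tau_{a_i,c_i}(\bar R(x,o,\nuu_n))\le\tau_{a_i-\sigma,c_i+\sigma}(\bar R(x,o,\muu))$; integrating against $\nuu_n$, TV-convergence of $\nuu_n$ to $\muu$ on bounded integrands yields $\limsup_n\Psi_i(\nuu_n)\le\int\tau_{a_i-\sigma,c_i+\sigma}(\bar R(x,o,\muu))\muu(\d x)$, and monotone convergence in $\sigma\downarrow0$ gives $\limsup_n\Psi_i(\nuu_n)\le\Psi_i(\muu)\le b_i/(1+\e)<b_i$. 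This contradicts $\Psi_i(\nuu_n)>b_i$ and forces $I^*>0$.
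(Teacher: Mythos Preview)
Your approach is genuinely different from the paper's: you work directly in the continuous setting via Corollary~\ref{Cor1}, Pinsker's inequality, and TV-continuity, whereas the paper first discretizes via Proposition~\ref{translUpThm}, invokes the discrete upper bound Proposition~\ref{translLowThm} (which already encodes the sprinkling construction of Section~\ref{Sprinkling}), and runs the contradiction in the finite-dimensional space $\mc{M}(\Pi_\delta)$ where semicontinuity is elementary. Your route is more direct in spirit, but Step~3 contains a real gap.

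The gap is the asserted uniform convergence $\bar R(x,o,\nuu_n)\to\bar R(x,o,\muu)$. The uniform convergence of the interference integrals, and hence of $D$ and $\Gamma$, is correct. But $R(\xi,o,\nu)$ also contains the $\nu$-essential supremum over relay positions, and this object depends on the \emph{null sets} of $\nu$, not just on the interference. Lemmas~\ref{PathRegularities}--\ref{PathRegularities2} concern regularity in $t$ and $x$ for a \emph{fixed} measure and say nothing about dependence on $\nuu$; indeed Lemma~\ref{upSemContLem} shows that even in the discrete setting $\nuu\mapsto R(\cdot,o,\nuu)$ is only lower semicontinuous, precisely because potential relays can disappear in the limit. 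Since $h(\nuu_n|\muu)<\infty$ forces $\nuu_n\ll\muu$, one always has $\nuu_n\text{-}\esssup\le\muu\text{-}\esssup$, but you need the reverse inequality (up to $o(1)$) to push the decreasing $\tau$ in the right direction, and TV-convergence alone does not deliver this \emph{uniformly} in $x$: for a fixed continuous $f$ one can show $\nuu_n\text{-}\esssup f\to\muu\text{-}\esssup f$, but the rate depends on $\muu(\{f>M\})$ and hence on $x$ through $f=\Gamma(x_t,\cdot,o,\muu_t)$. One could try to recover near-uniformity via Egorov's theorem after establishing pointwise convergence, but this is substantial additional work absent from your sketch. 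The paper sidesteps the whole issue: in the discrete setting the sprinkling argument guarantees $V(L_\lambda^\rr)=V(L_{\lambda'}^\rr)$, so the set of available relay sites is stable along the approximating sequence.

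Two smaller points. First, the monotone-convergence step actually produces the integral of $\one\{\int_0^T\one\{\bar R_t\le c_i\}\d t\ge a_i\}$ against $\muu$ (non-strict inequalities), which can exceed $\Psi_i(\muu)$. Second, the final strict inequality $b_i/(1+\e)<b_i$ fails when $b_i=0$, so your contradiction does not close in that case. Both issues are repairable by comparing not with $\muu$ but with $(1+\e')\muu$ for some $0<\e'<\e$ and invoking the strict monotonicity of Lemma~\ref{gngnpCor}(ii),(iv) to pass from non-strict to strict thresholds---which is exactly the device the paper uses in its proof.
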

The remainder of the paper is organized as follows. First, Section~\ref{outSec} provides an outline of the proof of Theorem~\ref{LDP} by stating two important auxiliary results, Propositions~\ref{translUpThm} and~\ref{LDP_Discr}. They are proved in Section~\ref{prop22Sec}. 
One important ingredient in the proof of Theorem \ref{LDP} is a sprinkling argument that is established in Section~\ref{prelSec}. 
Section~\ref{LDPSec} concludes the proof of Theorem~\ref{LDP}, whereas Corollaries~\ref{Cor1} and~\ref{Cor2} are established in Section~\ref{cor1Sec}. Finally, Section~\ref{simSec} provides selected simulation results.

\section{Outline of the proof of Theorem~\ref{LDP}}
\label{outSec}
The mathematical analysis of relay-based communications is substantially less technical if we discretize the possible user locations. To be more precise, users are no longer distributed according to $\muu$ but according to $\muu^\rr$ as defined in~\eqref{discMeasure}.
In other words, we subdivide the path space into cylinder sets and assume that at times $I_\de$ all users are located at the sites in $W_\de$. By the assumptions on $\muu$ we have that
$$\k_\de=\max_{u\in \Pi_\de}\muu^\rr(u)$$
tends to zero as $\de$ tends to zero.

Let us introduce the analogue of $\nuu[\ttau]$, as given in \eqref{nuTauDef}, in the discretized setting. For a general $\nuu\in\mc{M}(\Pi_\de)$ and a general bounded $\tau:[0,\infty)^{I_\de}\to[0,\infty)$, 
$\nuu^{\ms{up}}[\tau]$ is given as a measure in $\mc{M}(\Pi_\de)$ via 
$$\frac{\d\nuu^{\ms{up}}[\tau]}{\d\nuu}(u)=\tau((R(u_t,o,\nuu_t))_{t\in I_\de})$$
and similarly for $\nuu^{\ms{up-dir}}[\tau]$, $\nuu^{\ms{do}}[\tau]$ and $\nuu^{\ms{do-dir}}[\tau]$. Finally we put, 
\begin{align*}
\nuu[\ttau]=\big(\nuu^{\ms{up}}[\tau_1],\nuu^{\ms{up-dir}}[\tau_2],\nuu^{\ms{do}}[\tau_3],\nuu^{\ms{do-dir}}[\tau_4]\big)
\end{align*}
where $\ttau=(\tau_i)_{i\in\{1,\dots,4\}}$.

The following proposition establishes dominance relationships between $\tau$-frustrated users with respect to $\nuu$ and $\nuu^\rr$ for small values of the discretization parameter $\delta$. 
\begin{Proposition}
\label{translUpThm}
Let $\e>0$, then there exists $\delta'=\delta'(\varepsilon)\in\mathbb{B}$ such that for all $\delta\in\mathbb{B}\cap(0,\delta')$, $\nuu\in\mathcal{M}(\LL)$, and $\tau_i:\, \BB\to[0,\infty)$ bounded and decreasing for all $i\in\{1,\dots,4\}$, 
$$[(1-\e)\nuu^\rr][\ttau\circ\imath]\le(\nuu[\ttau])^\rr\le [(1+\e)\nuu^\rr][\ttau\circ\imath].$$
\end{Proposition}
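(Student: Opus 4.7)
The plan is to reduce the statement to a pointwise inequality between atoms of $\mc{M}(\Pi_\delta)$ and then trace it back to a uniform Lipschitz comparison for $\SIR$. Since $\Pi_\delta$ is finite, any inequality between measures in $\mc{M}(\Pi_\delta)$ reduces to the pointwise inequality at each atom $u\in\Pi_\delta$, and I argue component-wise in the $4$-tuples. Unfolding the definitions for the uplink component at an atom $u$ gives
\begin{align*}
(\nuu^{\ms{up}}[\tau_1])^\rr(\{u\})&=\int_{\rr^{-1}(u)}\tau_1(\bar R(x,o,\nuu))\,\nuu(\d x),\\
[(1\pm\varepsilon)\nuu^\rr]^{\ms{up}}[\tau_1\circ\imath](\{u\})&=(1\pm\varepsilon)\tau_1\bigl(\imath\bigl((R(u_{t^*},o,(1\pm\varepsilon)\nuu^\rr_{t^*}))_{t^*\in I_\delta}\bigr)\bigr)\nuu^\rr(\{u\}),
\end{align*}
so it suffices to show, for every $x\in\rr^{-1}(u)$ and every $t\in I$ with $t^*$ the centre of the time bin of $I_\delta$ containing $t$, the pointwise trajectory bound
\begin{align*}
R(u_{t^*},o,(1+\varepsilon)\nuu^\rr_{t^*})\le R(x_t,o,\nuu_t)\le R(u_{t^*},o,(1-\varepsilon)\nuu^\rr_{t^*})
\end{align*}
uniformly in $\nuu$. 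The decreasing property of $\tau_1$ then turns this into the atom-wise inequality after integrating against $\nuu$ and using $\nuu(\rr^{-1}(u))=\nuu^\rr(\{u\})$, absorbing the trivial factor $(1\pm\varepsilon)$ outside.

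To establish the $R$-comparison, the heart of the matter is a uniform Lipschitz estimate for $\SIR$. For $\xi\in\{u_{t^*},v\}\subset W_\delta$ paired with $\xi'\in\{x_t,y_t\}\subset W$ under $\rr(x)_{t^*}=u_{t^*}$ and $\rr(y)_{t^*}=v$, the Lipschitz condition on $x,y$ together with the grid resolution yields $|\xi-\xi'|\le J_1\delta T/2+r\sqrt d\,\delta$. Lipschitz continuity of $\ell$ then forces both the direct signal $\ell(|\xi-\eta|)$ and the interference $\nuu^\rr_{t^*}(\ell(|\cdot-\eta|))=\int\ell(|\rr(z)_{t^*}-\eta|)\,\nuu(\d z)$ to differ from their continuous counterparts by $O(\delta)$ and $O(\delta)\nuu(\LL)$ respectively. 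Because $\ell$ is continuous and strictly positive on the compact set $[0,2r\sqrt d]$, it has a positive lower bound $\ell_{\min}$, so the direct signal is bounded below by $\ell_{\min}$ and the interference by $\ell_{\min}\nuu(\LL)$, which makes the \emph{relative} errors $O(\delta)$ uniformly in $\nuu,\xi,\eta$. Choosing $\delta$ small enough to make these relative errors small compared to $\varepsilon$ and absorbing the slack into the scaling $(1\pm\varepsilon)$ of the measure yields
\begin{align*}
\SIR(u_{t^*},v,(1+\varepsilon)\nuu^\rr_{t^*})\le\SIR(x_t,y_t,\nuu_t)\le\SIR(u_{t^*},v,(1-\varepsilon)\nuu^\rr_{t^*})
\end{align*}
whenever $\rr(y)_{t^*}=v$. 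Monotonicity of $g$ lifts this to the same inequalities for $D$, and taking the minimum over the two legs of a relay path lifts it to $\G$.

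The passage from $\G$ to $R$ requires care because of the essential supremum. For the upper bound, the supremum in $R(u_{t^*},o,(1+\varepsilon)\nuu^\rr_{t^*})$ is just a maximum over the finite support of $\nuu^\rr_{t^*}$; for each $v$ in this support the preimage $\{y\in\LL:\rr(y)_{t^*}=v\}$ has positive $\nuu$-mass, the uniform $\G$-inequality applies to every such $y$, and so $\G(u_{t^*},v,o,(1+\varepsilon)\nuu^\rr_{t^*})$ is dominated by the $\nuu_t$-essential supremum of $\G(x_t,\cdot,o,\nuu_t)$; maximising in $v$ and combining with the direct bound gives the $R$-comparison. The lower bound is symmetric: every $y$ in the support of $\nuu$ has $v=\rr(y)_{t^*}$ in the support of $\nuu^\rr_{t^*}$, so the discrete maximum dominates $\G(x_t,y_t,o,\nuu_t)$ for $\nuu$-a.e.\ $y$ and therefore also the $\nuu_t$-essential supremum. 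The downlink components are handled by the same argument with the roles of transmitter and receiver swapped, and the direct components $\nuu^{\ms{up-dir}},\nuu^{\ms{do-dir}}$ require only the $D$-bound and no essential-supremum step. The main obstacle is precisely the uniformity in $\nuu$ of the $\SIR$-comparison, which rests on the positive lower bound $\ell_{\min}$; without it an adversarial $\nuu$ could drive the interference denominator arbitrarily close to zero and blow up the relative discretization error.
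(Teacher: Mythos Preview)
Your proposal is correct and follows essentially the same approach as the paper: reduce to atom-wise inequalities on $\Pi_\delta$, use the monotonicity of the $\tau_i$ to pass to a pointwise comparison of QoS trajectories, establish a uniform $\SIR$-comparison via the Lipschitz properties of $\ell$ and the paths together with the strict positivity $\ell_{\min}>0$, lift through $g$ to $D$ and $\Gamma$, and then handle the essential supremum by matching relay sites $v\in W_\delta$ with preimage paths $y\in\rr^{-1}(v)$ of positive $\nuu$-mass (and vice versa). The paper carries out exactly this plan, with the explicit Lipschitz calculation for the ratio $\ell(|u_t-v_t|)\nuu_s(\ell(|\cdot-y_s|))/[\ell(|x_s-y_s|)\nuu^\rr_t(\ell(|\cdot-v_t|))]$ bounded by $1\pm\varepsilon$ once $\delta$ is small, and with the same observation that the outer prefactor $(1\pm\varepsilon)$ on the measure is pure slack while the inner $(1\pm\varepsilon)$ scaling absorbs the discretization error in $\SIR$.
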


Working in the discrete setting simplifies the situation substantially. Instead of Poisson point processes on $\LL$, we can consider independent Poisson random variables attached to every element of the path grid $\Pi_\delta$. In particular, 
the relative entropy for the discretized setting is given by
$$h(\nuu|\muu^\rr)=\sum_{u\in \Pi_\delta}h\big(\nuu(u)|\muu^\rr(u)\big).$$
where for $a\ge0$ and $b>0$ we write $h(a|b)=a\log\tfrac ab-a+b$.
\begin{Proposition}
\label{LDP_Discr}
Let $0<\a<2$ and $\tau_i:\, [0,\infty)^{I_\de}\to[0,\infty)$, for $i\in\{1,\dots,4\}$, be bounded, measurable and decreasing functions which map trajectories $\g$ to zero if $\g_t\ge c_+$ for all $t\in I_\de$. Further, let $F:\mc{M}(\Pi_\de)^4\to[-\infty,\infty)$ be any increasing measurable function that is bounded from above and maps the vector of zero measures to $-\infty$. If $F$ and $\tau_i$ are u.s.c., then
$$\limsup_{\lambda\to\infty}\frac1\lambda\log\E\exp(\la F((\a L^\rr_{\lambda})[\ttau]))\le-\inf_{\nuu\in\mathcal{M}(\Pi_\delta)}\big\{h(\nuu|\muu^\rr)-F((\a \nuu)[\ttau])\big\}$$
whereas if $F$ and $\tau_i$ are l.s.c., then
$$\liminf_{\lambda\to\infty}\frac1\lambda\log\E\exp(\la F((\a L^\rr_{\lambda})[\ttau]))\ge-\inf_{\nuu\in\mathcal{M}(\Pi_\delta)}\big\{h(\nuu|\muu^\rr)-F((\a \nuu)[\ttau])\big\}.$$
\end{Proposition}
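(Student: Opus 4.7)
The plan is to realise Proposition \ref{LDP_Discr} as a Varadhan-type consequence of a finite-dimensional LDP for scaled Poisson random variables. Since $\Pi_\de$ is finite, the discretised empirical measure $L_\la^\rr$ is the vector $(\la^{-1}N_u)_{u\in\Pi_\de}$ with the $N_u$ independent and $\mathrm{Poi}(\la\muu^\rr(u))$-distributed. A direct moment-generating function computation (equivalently, Cram\'er's theorem in each coordinate) shows that each coordinate satisfies an LDP with good rate function $a\mapsto h(a\mid\muu^\rr(u))$, and by independence the joint law of $L_\la^\rr$ obeys an LDP in $\mc{M}(\Pi_\de)$ with good rate function $\nuu\mapsto h(\nuu\mid\muu^\rr)$.

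Set $\Phi_\a(\nuu):=F((\a\nuu)[\ttau])$, so that the proposition becomes the Laplace upper/lower bound for $\E e^{\la\Phi_\a(L_\la^\rr)}$. For the upper bound, $\Phi_\a$ is bounded above by hypothesis, which together with the goodness of the rate function gives exponential tightness on sub-level sets of $h(\cdot\mid\muu^\rr)$; one covers any such compact set by small neighbourhoods on which $\Phi_\a$ is close to its local supremum (this is where u.s.c.\ of $\Phi_\a$ enters), applies the LDP upper bound on each, and lets the mesh shrink. For the lower bound, pick $\nuu_0$ approaching the supremum of $\Phi_\a(\nuu_0)-h(\nuu_0\mid\muu^\rr)$, use l.s.c.\ of $\Phi_\a$ to select an open neighbourhood $G\ni\nuu_0$ on which $\Phi_\a\geq\Phi_\a(\nuu_0)-\eta$, and combine $\E e^{\la\Phi_\a(L_\la^\rr)}\geq e^{\la(\Phi_\a(\nuu_0)-\eta)}\P(L_\la^\rr\in G)$ with the LDP lower bound on open sets.

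The substantive work, and the main obstacle, is verifying that $\Phi_\a$ itself is u.s.c.\ (resp.\ l.s.c.) under the stated hypotheses on $F$ and $\ttau$. The map $\nuu\mapsto\nuu_t(\ell(|\cdot-\eta|))$ is linear and continuous on the finite-dimensional space $\mc{M}(\Pi_\de)\cong[0,\infty)^{\Pi_\de}$ and strictly positive whenever $\nuu_t\neq 0$, so $\SIR(\xi,\eta,\nuu_t)$ and hence $D(u_t,o,\nuu_t)$ are continuous in $\nuu$ off the degenerate boundary where some $\nuu_t$ vanishes. At that boundary $D\to c_+$ by convention, and the hypothesis that $\tau_i$ annihilates saturated trajectories transfers this to continuity of $\nuu\mapsto\nuu[\ttau]$ through the $\nuu$-prefactor in its Radon--Nikodym density. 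The relay quantity $R$ involves a maximum over the (discrete) support of $\nuu$, which is only l.s.c.\ in $\nuu$ because mass can disappear in the limit; composing with the decreasing function $\tau_i$ flips this, so the resulting density is u.s.c.\ (resp.\ l.s.c.) exactly when $\tau_i\circ\imath$ is. Composing with the u.s.c.\ (resp.\ l.s.c.) function $F$ then yields the required semicontinuity of $\Phi_\a$. Without the assumption that $\tau_i$ kills saturated trajectories, the degenerate boundary $\nuu_t\to 0$ would create a discontinuity inconsistent with either semicontinuity direction, so that hypothesis is essential.
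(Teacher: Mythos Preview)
Your upper bound argument is correct and matches the paper's: the LDP for $L_\la^\rr$ as a vector of scaled independent Poissons follows from Cram\'er plus independence, and the composition $\Phi_\a=\nuu\mapsto F((\a\nuu)[\ttau])$ is u.s.c.\ under the stated hypotheses, so Varadhan's upper bound applies directly.

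The lower bound, however, contains a genuine gap. You assert that ``composing with the decreasing function $\tau_i$ flips this, so the resulting density is u.s.c.\ (resp.\ l.s.c.) exactly when $\tau_i\circ\imath$ is.'' The u.s.c.\ direction is fine (l.s.c.\ inner function composed with decreasing u.s.c.\ outer function gives u.s.c.), but the parenthetical l.s.c.\ claim is false. If $R$ is merely l.s.c.\ in $\nuu$, then along a sequence $\nuu_n\to\nuu_*$ one may have $R(u_t,o,\nuu_{n,t})\to r^*>R(u_t,o,\nuu_{*,t})$; since $\tau_i$ is decreasing, $\tau_i(R(\cdot,\nuu_n))$ can drop strictly below $\tau_i(R(\cdot,\nuu_*))$ in the limit, violating lower semicontinuity regardless of whether $\tau_i$ itself is l.s.c. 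Concretely: take $\nuu_n$ with mass $1/n$ at a site $v$ that serves as the unique good relay for some user $u$. Each $\nuu_n$ allows $u$ to relay via $v$, so $R(u_t,o,\nuu_{n,t})$ stays high and $\tau_i$ of it stays low; in the limit $\nuu_*(v)=0$, the relay disappears, $R$ jumps down, and $\tau_i$ jumps up. Thus $\nuu\mapsto(\a\nuu)[\ttau]$ is not l.s.c., and you cannot invoke Varadhan's lower bound for $\Phi_\a$ as written. Your remark about the degenerate boundary $\nuu_t\to0$ handles only the case where \emph{all} mass vanishes at time $t$; the obstruction is when \emph{some} sites empty while others persist.

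The paper's proof confronts exactly this obstruction and resolves it by a sprinkling argument. One replaces the discontinuous relay term $\one\{\nuu_t(v_t)>0\}\Gamma(u_t,v_t,o,\nuu_t)$ by the continuous $\min\{1,\e^{-1}\nuu_t(v_t)\}\Gamma(u_t,v_t,o,\nuu_t)$, yielding a continuous approximation $\nuu[\ttau,\e]$ to which Varadhan's lower bound does apply. The link between $\E e^{\la F((\a L_\la^\rr)[\ttau])}$ and the approximating functional is made by enlarging the intensity from $\la$ to $\la'=(1+2\e_0\k_\de^{-1})\la$ and conditioning on the event that every quasi-empty site (mass $\le\e_0$ under $L_{\la'}^\rr$) is in fact empty under $L_\la^\rr$; this event has probability $\ge e^{-\sqrt{\e_0}\la}$, and on it the approximation coincides with the true relay indicator. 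Sending $\e_0\downarrow0$ and then $\a_-\uparrow\a$ (via the explicit entropy identity $h(a\nuu\mid\muu^\rr)=ah(\nuu\mid\muu^\rr)+a\log a\,\nuu(\Pi_\de)+(1-a)\muu^\rr(\Pi_\de)$) recovers the claimed rate. This mechanism is not optional: without it the lower bound does not follow from Varadhan's lemma.
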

The difficulty of the proof of Proposition~\ref{LDP_Discr} lies in the discontinuity of the function $\nu\mapsto\nu[\ttau]$. Indeed, if the number of users on a certain site tends to zero, then in the limit other users cannot relay via this site. This might lead to a sudden drop in the QoS and therefore to a sudden increase of frustrated users. Hence, we have to deal with the continuity problems arising from configurations that exhibit sites with a small but positive number of users. A standard approach to deal with such pathological events would be to use the method of exponential approximations~\cite[Section 4.2.2]{dz98}. However, on an exponential scale, having a small but positive number of users on a certain site is not substantially less probable then having no users on this site. Therefore, exponential approximation does not seem to be an appropriate tool. We will use instead the sprinkling technique from~\cite{sprinkling}. That is, by increasing the Poisson intensity slightly, we add a small number of additional users in a way that after the sprinkling every occupied site contains a number of users that is of the same order as the Poisson intensity. We show that the assumption of observing a sprinkling of the desired kind comes at negligible cost on the exponential scale and that on the resulting configurations the map $\nu\mapsto\nu[\ttau]$ exhibits the desired continuity properties.

\section{Preliminaries}
\label{prelSec}
Before we come to the proof of Proposition~\ref{translUpThm} and Proposition~\ref{LDP_Discr}, we establish some preliminary results. First note, by a quick calculation, $D(\xi,\eta,L_{\lambda,t})=c_+$ if $L_{\la,t}(W)\le \b_o=\min\{1,\r_+^{-1}\ell_{\ms{min}}\ell_{\ms{max}}^{-1}\}$, where we put $\ell_{\ms{min}}=\min_{\xi,\eta\in W} \ell(|\xi-\eta|)$ and $\ell_{\ms{max}}=\max_{\xi,\eta\in W} \ell(|\xi-\eta|)$. 

\subsection{Monotonicity and continuity properties of QoS trajectories}
In the first lemma, we show certain monotonicity properties of $D(\xi,\eta,\nu)$ and $R(\xi,\eta,\nu)$ w.r.t.~the measure $\nu\in\mc{M}(W_\delta)$ for the sites of $W_\delta$ that have measure zero under $\nu$.
In the following, we write  $X^{\lambda}_\delta=\lambda L_\lambda^\rr$ and $V(\nu)=\{\zeta\in W_\delta:\, \nu(\zeta)=0\}$.

\begin{Lemma}
\label{gngnpCor}
Let $\delta\in\mathbb{B}$ and $\xi,\eta\in W_\delta$ be arbitrary. 
\begin{enumerate}
\item[(i)] If $\nu,\nu'\in\mc{M}(W_\delta)$ are such that $\nu\le\nu'$, then $D(\xi,\eta,\nu')\le D(\xi,\eta,\nu)$.
\item[(ii)] If $\nu,\nu'\in\mc{M}(W_\delta)$ are such that $\nu<\nu'$  and $D(\xi,\eta,\nu')<c_+$, then $D(\xi,\eta,\nu')< D(\xi,\eta,\nu)$.
\item[(iii)] If $\nu,\nu'\in\mc{M}(W_\delta)$ are such that $\nu\le\nu'$ and $V(\nu)=V(\nu')$, then $R(\xi,\eta,\nu')\le R(\xi,\eta,\nu)$.
\item[(iv)] If $\nu,\nu'\in\mc{M}(W_\delta)$ are such that $\nu<\nu'$, $V(\nu)=V(\nu')$ and $R(\xi,\eta,\nu')<c_+$, then $R(\xi,\eta,\nu')< R(\xi,\eta,\nu)$.
\item[(v)] If $\lambda'\ge\lambda>0$ and $\sigma\in(0,1)$ are such that $\tfrac{X^{\lambda'}_\delta(\Pi_\delta)}{X^{\lambda}_\delta(\Pi_\delta)}\le1+\tfrac{\ell_{\ms{min}}(1-\sigma)}{\ell_{\ms{max}}\sigma}$, then, almost surely, $D(\xi,\eta,L_{\lambda,t}^\rr)\le D(\xi,\eta,\sigma L_{\lambda',t}^\rr)$ for every $t\in I_\delta$.
\end{enumerate}
\end{Lemma}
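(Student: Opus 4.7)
The strategy is to handle the five parts in order. All of them reduce, via the expression $D(\xi,\eta,\nu) = g(\ell(|\xi-\eta|)/\nu(\ell(|\cdot-\eta|)))$, to manipulations of the scalar interference $\nu(\ell(|\cdot-\eta|))$.

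For (i) and (ii) I would use that $\nu \mapsto \nu(\ell(|\cdot-\eta|))$ is monotone in $\nu$, since $\ell$ is strictly positive, and strictly monotone whenever $\nu < \nu'$ on the finite set $W_\delta$, because any point of strict inequality carries strictly positive $\ell$-mass. Composing with the non-decreasing function $g$ yields (i). For the strict statement (ii), the hypothesis $D(\xi,\eta,\nu') < c_+$ forces $\SIR(\xi,\eta,\nu') < \r_+$, where $g$ is strictly increasing; in the auxiliary case $\SIR(\xi,\eta,\nu) \ge \r_+$ one even has $D(\xi,\eta,\nu) = c_+ > D(\xi,\eta,\nu')$ trivially.

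Parts (iii) and (iv) follow from (i) and (ii) by unfolding
\[
R(\xi,\eta,\nu) = \max\Big\{ D(\xi,\eta,\nu),\, \max_{\zeta \in W_\delta \setminus V(\nu)} \min\{ D(\xi,\zeta,\nu),\, D(\zeta,\eta,\nu) \} \Big\}.
\]
Since $W_\delta$ is finite and the assumption $V(\nu) = V(\nu')$ pins down a common outer-max index set, (i) propagates through the inner min and outer max to give (iii). For (iv) the hypothesis $R(\xi,\eta,\nu') < c_+$ forces $D(\xi,\eta,\nu') < c_+$ and $\G(\xi,\zeta,\eta,\nu') < c_+$ for every $\zeta$ in the common support. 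A brief case analysis on each inner pair $\{a,b\} = \{D(\xi,\zeta,\nu'), D(\zeta,\eta,\nu')\}$ then shows that $\min\{a',b'\} > \min\{a,b\}$: either both $a,b<c_+$, so by (ii) both $a',b'$ jump strictly upward, or exactly one of $a,b$ equals $c_+$, in which case $\min\{a,b\}$ is the smaller argument and strictly rises by (ii) while the other stays pinned at $c_+$. Strictness of the outer max over a finite index set concludes (iv).

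Part (v) is the only quantitative item and the main obstacle. The plan is to work under the standard sprinkling coupling $X^\lambda \subseteq X^{\lambda'}$ and split
\[
\sigma L_{\lambda',t}^\rr(\ell(|\cdot-\eta|))
= \tfrac{\sigma\lambda}{\lambda'}\, L_{\lambda,t}^\rr(\ell(|\cdot-\eta|))
+ \tfrac{\sigma}{\lambda'} \sum_{X_i \in X^{\lambda'} \setminus X^\lambda} \ell(|\rr(X_{i,t})-\eta|).
\]
Bounding the extra sum by $\sigma\ell_{\ms{max}}(X^{\lambda'}_\delta(\Pi_\delta) - X^\lambda_\delta(\Pi_\delta))/\lambda'$ and using the crude lower bound $L_{\lambda,t}^\rr(\ell(|\cdot-\eta|)) \ge \ell_{\ms{min}}\, X^\lambda_\delta(\Pi_\delta)/\lambda$, the desired interference inequality $\sigma L_{\lambda',t}^\rr(\ell(|\cdot-\eta|)) \le L_{\lambda,t}^\rr(\ell(|\cdot-\eta|))$ reduces by elementary algebra to $X^{\lambda'}_\delta(\Pi_\delta)/X^\lambda_\delta(\Pi_\delta) \le 1 + (\lambda' - \sigma\lambda)\ell_{\ms{min}}/(\sigma\lambda\ell_{\ms{max}})$. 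Since $\lambda' \ge \lambda$ implies $\lambda' - \sigma\lambda \ge (1-\sigma)\lambda$, this sufficient condition is weaker than the stated hypothesis; monotonicity of $g$ then transfers the resulting interference inequality to the claimed $D$-inequality.
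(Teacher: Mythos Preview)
Your proof is correct and follows essentially the same route as the paper's. Both arguments reduce everything to monotonicity of the interference functional $\nu\mapsto\nu(\ell(|\cdot-\eta|))$ together with the monotonicity properties of $g$, and for (v) both use the coupling $X^\lambda\subset X^{\lambda'}$ and the crude bounds $\ell_{\ms{min}}\le\ell\le\ell_{\ms{max}}$ to control the extra interference from the sprinkled points. Your treatment of (iv) is actually more careful than the paper's one-line justification: the case analysis on whether one or both of $D(\xi,\zeta,\nu'),D(\zeta,\eta,\nu')$ equals $c_+$ is exactly what is needed to push strict inequality through the inner minimum, and the paper leaves this implicit. Likewise in (v), your observation that the derived sufficient condition $X^{\lambda'}_\delta(\Pi_\delta)/X^\lambda_\delta(\Pi_\delta)\le 1+(\lambda'-\sigma\lambda)\ell_{\ms{min}}/(\sigma\lambda\ell_{\ms{max}})$ is weaker than the stated hypothesis (via $\lambda'\ge\lambda$) is a nice clarification; the paper instead drops the factor $\lambda/\lambda'\le1$ in the numerator early on to reach the hypothesis directly.
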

\begin{proof}
First, we note that 
$$\frac{\SIR(\xi,\eta,\nu')}{\SIR(\xi,\eta,\nu)}=\frac{\nu(\ell(|\cdot-\eta|))}{\nu'(\ell(|\cdot-\eta|))},$$
so that the monotonicity properties of $g$ imply the first two claims. Clearly, this monotonicity also extends to expressions of the form $\Gamma(\xi,\eta,\zeta,\nu)$ with $\xi,\eta,\zeta\in W_\delta$. Moreover, under the additional condition $V(\nu)=V(\nu')$ the measures $\nu$ and $\nu'$ have the same zero-sets, which gives claims (iii) and (iv). 

Finally, the last of the asserted inequalities is equivalent to 
$$\frac{L_{\lambda',t}^\rr(\ell(|\cdot-\eta|))-L_{\lambda,t}^\rr(\ell(|\cdot-\eta|))}{L_{\lambda,t}^\rr(\ell(|\cdot-\eta|))}\le \frac{1-\sigma}{\sigma}.$$
This time, we obtain that 
\begin{align*}
\frac{L_{\lambda',t}^\rr(\ell(|\cdot-\eta|))-L_{\lambda,t}^\rr(\ell(|\cdot-\eta|))}{L_{\lambda,t}^\rr(\ell(|\cdot-\eta|))}\hspace{-0.05cm}=\hspace{-0.05cm}\frac{\tfrac{\lambda}{\lambda'}X^{\lambda'}_{\delta,t}(\ell(|\cdot-\eta|))-X^{\lambda}_{\delta,t}(\ell(|\cdot-\eta|))}{X^{\lambda}_{\delta,t}(\ell(|\cdot-\eta|))}\hspace{-0.05cm}\le\hspace{-0.05cm}\frac{\ell_{\ms{max}}}{\ell_{\ms{min}}} \frac{X^{\lambda'}_{\delta,t}(W_\delta)-X^{\lambda}_{\delta,t}(W_\delta)}{X^{\lambda}_{\delta,t}(W_\delta)},
\end{align*}
as required.
\end{proof}

In the next lemma, we relate essential suprema w.r.t.~path measures and their time projections. We will write $A^c$ to indicate the complement of a set $A$.
\begin{Lemma}
\label{EsssupLemma} 
Let $\nuu\in\mc{M}(\LL)$, $x\in\LL$ and $t\in I$, then
$$\nuu_t\text{-}\esssup_{\eta\in W}\G(x_t,\eta,o,\nuu_t)=\nuu\text{-}\esssup_{y\in\LL}\G(x_t,y_t,o,\nuu_t).$$
\end{Lemma}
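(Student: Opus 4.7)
The plan is to treat this as a routine change-of-variables identity for essential suprema under a push-forward measure, with $\nuu_t = \nuu\circ\pi_t^{-1}$ playing the role of the push-forward. Set $f(\eta):=\G(x_t,\eta,o,\nuu_t)$ as a function of $\eta\in W$ only (keeping $x_t$, $o$, $\nuu_t$ fixed). Then the left-hand side is $\nuu_t\text{-}\esssup_{\eta\in W} f(\eta)$, while the right-hand side is $\nuu\text{-}\esssup_{y\in\LL}(f\circ\pi_t)(y)$, so the statement reduces to the general fact that essential suprema are preserved under the push-forward.

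First I would verify measurability: the map $\eta\mapsto\ell(|\xi-\eta|)$ is continuous for any fixed $\xi$, hence $\eta\mapsto\nuu_t(\ell(|\cdot-\eta|))$ is measurable (indeed continuous by dominated convergence, since $\ell$ is Lipschitz and $W$ bounded), and so $D(x_t,\eta,\nuu_t)$ and $D(\eta,o,\nuu_t)$ are measurable in $\eta$. Consequently $f$ is measurable on $W$, and $f\circ\pi_t$ is measurable on $\LL$ because $\pi_t$ is continuous.

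Next I would invoke the standard representation
\begin{equation*}
\mu\text{-}\esssup g \;=\; \inf\{c\in\R:\mu(\{g>c\})=0\}
\end{equation*}
for any measurable $g$. For each $c\in\R$ the level-set identity
\begin{equation*}
\{y\in\LL:(f\circ\pi_t)(y)>c\}=\pi_t^{-1}\bigl(\{\eta\in W:f(\eta)>c\}\bigr)
\end{equation*}
combined with the definition of the push-forward gives
\begin{equation*}
\nuu\bigl(\{y:(f\circ\pi_t)(y)>c\}\bigr)=\nuu_t\bigl(\{\eta:f(\eta)>c\}\bigr).
\end{equation*}
Hence the two sets of $c$'s entering the defining infima coincide, and the two essential suprema are equal.

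There is no real obstacle here; the statement is essentially the definition of push-forward applied to the essential supremum. The only small points worth being careful about are measurability of $f$ (handled by continuity of $\ell$ in its argument) and the fact that $\pi_t:\LL\to W$ is a measurable (in fact continuous) map, so that $\pi_t^{-1}$ of a Borel set is Borel.
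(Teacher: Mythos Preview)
Your argument is correct and is genuinely simpler than the paper's. You exploit the continuity of $f(\eta)=\Gamma(x_t,\eta,o,\nuu_t)$ to reduce the claim to the tautological push-forward identity $\nuu_t(\{f>c\})=\nuu(\pi_t^{-1}\{f>c\})$ for all $c$, which immediately gives equality of the level-set infima defining the two essential suprema. The paper instead works with the null-set characterization $\mu\text{-}\esssup g=\inf_{N:\mu(N)=0}\sup_{N^c}g$ and proves the two inequalities separately; for $\le$ it must, given a $\nuu$-null set $N\subset\LL$, produce a $\nuu_t$-null set in $W$, and the natural candidate $(\pi_t(N^c))^c$ is only universally measurable (an image of a Borel set under a continuous map), which forces an appeal to a result on analytic sets to find Borel envelopes. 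Your route sidesteps this measurability subtlety entirely by only ever taking \emph{preimages} of Borel sets, at the modest cost of first checking that $f$ is Borel (indeed continuous) on $W$; since that continuity is straightforward here, your approach is the more economical one.
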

\begin{proof}
We first show $\ge$. Let $N_t$ be such that $\nuu_t(N_t)=0$ and define $N=\{\g\in\LL: \g_t\in N_t\}$. In particular $\nuu(N)=0$ and it suffices to show that 
$$\sup_{\eta\in (N_t)^c}\G(x_t,\eta,o,\nuu_t)\ge\sup_{y\in N^c}\G(x_t,y_t,o,\nuu_t).$$
But this is trivially true. For the converse, $\le$, let $N$ be such that $\nuu(N)=0$. We define $N_t=(\pi_t(N^c))^c$ and note that $\pi_t^{-1}(N_t)\subset N$. Indeed, suppose that $\g\in N^c$, then $\g_t\in \pi_t(N^c)$ and thus $\g_t\notin N_t$ which implies that $\g\notin \pi_t^{-1}(N_t)$. Since $\pi_t$ is continuous, 
by \cite[Theorem 13.2.6]{Dudley}, $N_t$ is a universally measurable set and there exist Borel measurable sets $A,B\subset W$ such that $A\subset N_t\subset B$ and $\nuu(\pi_t^{-1}(A))=\nuu(\pi_t^{-1}(B))$. This implies, that $\nuu_t(B)=\nuu_t(A)\le \nuu(N)=0$. From this it follows that $N_t$ is a $\nuu_t$ nullset. Hence, it suffices to show that 
$$\sup_{\eta\in B^c}\G(x_t,\eta,o,\nuu_t)\le\sup_{y\in N^c}\G(x_t,y_t,o,\nuu_t).$$
But this is also true since by construction, for every $\eta\in B^c\subset(N_t)^c=\pi_t(N^c)$ there exists a $y\in N^c$ such that $\eta=y_t$.
\end{proof}
\begin{Remark}
Lemma~\ref{EsssupLemma} remains true if the uplink $\G$ is replaced by the downlink $\G$.
\end{Remark}
Next, we transfer regularities of paths supported by $\nuu\in\mc{M}(\LL)$ to the QoS trajectories $\bar D$ and $\bar R$. 
\begin{Lemma}
\label{PathRegularities}
Let $\nuu\in\mc{M}(\LL)$, then for any $x\in\LL$ we have that
\begin{enumerate}
\item[(i)] $t\mapsto D(x_t,y_t,\nuu_t)$ is Lipschitz continuous for all $y\in\LL$ and 
\item[(ii)] $t\mapsto R(x_t,o,\nuu_t)$, $t\mapsto R(o,x_t,\nuu_t)$ are Lipschitz continuous.
\end{enumerate}
\end{Lemma}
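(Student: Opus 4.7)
The plan is to establish (i) first and then combine it with Lemma~\ref{EsssupLemma} to deduce (ii). If $\nuu(\LL)=0$ then $\nuu_t=0$ for every $t$ and both $D$ and $R$ equal $c_+$ identically, so I would dispose of this case at the outset and henceforth assume $\nuu(\LL)>0$.

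For (i), I would introduce the numerator $A(t)=\ell(|x_t-y_t|)$ and denominator $B(t)=\nuu_t(\ell(|\cdot-y_t|))=\int_\LL \ell(|z_t-y_t|)\,\nuu(\d z)$ of $\SIR(x_t,y_t,\nuu_t)$. Combining the Lipschitz property of $\ell$ (constant $J_2$) and of the trajectories $x,y,z\in\LL$ (constant $J_1$) via the triangle inequality gives $|A(t)-A(s)|\le 2J_1J_2|t-s|$ and, after passing the pointwise bound under the integral, $|B(t)-B(s)|\le 2J_1J_2\nuu(\LL)|t-s|$, with constants depending on $x,y$ only through $\nuu(\LL)$. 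The obstacle is that $B$ may be close to zero, so that $A/B$ is not Lipschitz on its own. This is circumvented by exploiting that $g\equiv c_+$ on $[\r_+,\infty)$: writing $D(x_t,y_t,\nuu_t)=g(\min(\r_+,A(t)/B(t)))$, I would replace $B(t)$ by $\tilde B(t)=\max(B(t),\ell_{\ms{min}}/\r_+)$ without altering the value of this expression. Indeed, when $A(t)/B(t)<\r_+$ one has $B(t)>A(t)/\r_+\ge \ell_{\ms{min}}/\r_+$, so $\tilde B=B$; when $A(t)/B(t)\ge\r_+$ the outer clipping yields $\r_+$ regardless. Since $\tilde B\ge \ell_{\ms{min}}/\r_+$, the quotient $A/\tilde B$ is a ratio of Lipschitz functions with a uniformly positive denominator, hence Lipschitz, and composing with $\min(\r_+,\cdot)$ and the Lipschitz $g$ gives Lipschitz continuity of $t\mapsto D(x_t,y_t,\nuu_t)$ with a constant depending on $y$ only through $\nuu(\LL)$.

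For (ii), I would invoke Lemma~\ref{EsssupLemma} to rewrite
\[
R(x_t,o,\nuu_t)=\max\{D(x_t,o,\nuu_t),\,\nuu\text{-}\esssup_{y\in\LL}\G(x_t,y_t,o,\nuu_t)\},
\]
which has the crucial merit that the reference measure $\nuu$ in the essential supremum no longer depends on $t$. For each fixed $y\in\LL$, applying (i) to each entry of $\G(x_t,y_t,o,\nuu_t)=\min\{D(x_t,y_t,\nuu_t),D(y_t,o,\nuu_t)\}$ yields a common Lipschitz constant $K$ that is uniform in $y$. The key step is to transfer this to $M(t):=\nuu\text{-}\esssup_y\G(x_t,y_t,o,\nuu_t)$: for any $\epsilon>0$, the set $A_\epsilon=\{y\in\LL:\G(x_t,y_t,o,\nuu_t)>M(t)-\epsilon\}$ has positive $\nuu$-measure by definition of the essential supremum, and the uniform Lipschitz bound gives $\G(x_s,y_s,o,\nuu_s)>M(t)-\epsilon-K|s-t|$ for every $y\in A_\epsilon$, whence $M(s)\ge M(t)-\epsilon-K|s-t|$. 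Letting $\epsilon\downarrow 0$ and interchanging $s$ and $t$ shows that $M$ is $K$-Lipschitz, and taking the max with the Lipschitz function $D(x_t,o,\nuu_t)$ from (i) concludes the uplink case. The downlink statement $t\mapsto R(o,x_t,\nuu_t)$ follows by the same argument together with the remark after Lemma~\ref{EsssupLemma}. The hard part throughout is the potential singularity from small $B$, which the saturation of $g$ at $c_+$ tames.
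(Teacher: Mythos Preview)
Your proof is correct, and part~(ii) follows the paper's argument essentially verbatim (invoke Lemma~\ref{EsssupLemma} to get a $t$-independent reference measure, use the uniform-in-$y$ Lipschitz constant from~(i), and run the $\epsilon$-argument on the essential supremum).

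The only real difference is in part~(i). You worry that the denominator $B(t)=\nuu_t(\ell(|\cdot-y_t|))$ may be close to zero and therefore invoke the saturation of $g$ on $[\r_+,\infty)$ to replace $B$ by $\tilde B=\max(B,\ell_{\ms{min}}/\r_+)$. This works, but is unnecessary here: since $\ell$ is strictly positive and $W$ is compact, one has $\ell\ge\ell_{\ms{min}}>0$, whence $B(t)\ge\ell_{\ms{min}}\,\nuu(\LL)>0$ uniformly in $t$ and $y$. The paper simply uses this lower bound to conclude directly that $t\mapsto\SIR(x_t,y_t,\nuu_t)$ is Lipschitz with constant $(\ell_{\ms{max}}/\ell_{\ms{min}}+1)\,2J_1J_2/(\nuu(\LL)\ell_{\ms{min}})$, and then composes with $g$. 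Your detour via the clipping $\min(\r_+,\cdot)$ is a valid alternative and would be genuinely needed if $\ell$ could vanish, so it is a more robust argument; in the present setting, though, the paper's route is shorter and yields the same uniform-in-$y$ constant needed downstream.
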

\begin{proof}
First note that if $\nuu(\LL)=0$, then by the definition of $g$, $\bar D$ and $\bar R$ are constant and hence Lipschitz continuous. Let $\nuu(\LL)>0$. Next we show that $t\mapsto \SIR(x_t,y_t,\nuu_t)$ is Lipschitz continuous. Indeed, since $x,y$ and $\ell$ are assumed to be Lipschitz continuous, comparing the numerator in $\SIR$ gives
$$|\ell(|x_s-y_s|)-\ell(|x_t-y_t|)|\le J_2(|x_s-x_t|+|y_s-y_t|)\le 2J_2J_1|s-t|$$
which tends to zero as $s$ tends to $t$. For the denominator, using the above, we have 
$$|\nuu_s(\ell(|\cdot-y_s|))-\nuu_t(\ell(|\cdot-y_t|))|=|\nuu(\ell(|\pi_s(\cdot)-y_s|))-\nuu(\ell(|\pi_t(\cdot)-y_t|))|\le 2J_2J_1\nuu(\LL)|s-t|.$$
Using this we can conclude
$$|\SIR(x_s,y_s,\nuu_s)-\SIR(x_t,y_t,\nuu_t)|\le(\frac{\ell_{\ms{max}}}{\ell_{\ms{min}}}+1)\frac{2J_2J_1}{\nuu(\LL)\ell_{\ms{min}}}|s-t|$$
where the Lipschitz constant depends on $\nuu$ but not on $x$ and $y$.
Now, since $g$ is assumed to be Lipschitz continuous, part $(i)$ follows from the definition of $\bar D$.
For $\bar R(x,o,\nuu)$ in part $(ii)$ it suffices to show that $t\mapsto \nuu_t\text{-}\esssup_{\eta\in W}\Gamma(x_t,\eta,o,\nuu_t)$ is Lipschitz continuous since taking maxima is Lipschitz continuous. Let $\e>0$ be arbitrary. We can use Lemma~\ref{EsssupLemma} to lift the essential suprema to the path level and estimate
\begin{align}
\label{Est1}
\nuu\text{-}\esssup_{y\in\LL}\Gamma(x_t,y_t,o,\nuu_t)-\nuu\text{-}\esssup_{y\in\LL}\Gamma(x_s,y_s,o,\nuu_s)\le\Gamma(x_t,y_t,o,\nuu_t)-\Gamma(x_s,y_s,o,\nuu_s)+2\e
\end{align}
for some $y=y(x_t)\in N^c$ where $N=N(x_s)$ is a $\nuu$-nullset. Since $\G$ is given as a maximum of Lipschitz continuous functions, with parameter independent of $x$ and $y$, in the r.h.s.~of \eqref{Est1} we can further estimate
\begin{align*}
\Gamma(x_t,y_t,o,\nuu_t)-\Gamma(x_s,y_s,o,\nuu_s)\le \a|s-t|
\end{align*}
for some constant $\a>0$. Sending $\e$ to zero and using the symmetry in $s$ and $t$, this gives the Lipschitz continuity. For $\bar R(o,x,\nuu)$ the proof is analogous. 
\end{proof}

Note that by the above lemma, for $x,y\in\LL$ and $\nuu\in\mc{M}(\LL)$, $\bar D(x,y,\nuu)$, $\bar R(x,o,\nuu)$ and $\bar R(o,x,\nuu)$ are elements of $\BB$.
The following lemma establishes continuity and in particular Borel measurability for the QoS quantities as functions of Lipschitz paths. Let us write $\Vert\cdot\Vert$ for the supremum norm. 

\begin{Lemma}
\label{PathRegularities2}
Let $y\in\LL$ and $\nuu\in\mc{M}(\LL)$. Then, as mappings from $\LL$ to $\BB$
\begin{enumerate}
\item[(i)] $x\mapsto \bar D(x,y,\nuu)$ is Lipschitz continuous and 
\item[(ii)] $x\mapsto \bar R(x,o,\nuu)$ and $x\mapsto \bar R(o,x,\nuu)$ are Lipschitz continuous.
\end{enumerate}
\end{Lemma}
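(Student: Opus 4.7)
The plan is to follow closely the structure of Lemma~\ref{PathRegularities}, but now keeping the time $t$ and the measure $\nuu_t$ fixed while perturbing the argument $x$ in the supremum norm. The trivial case $\nuu(\LL)=0$ makes all the involved QoS quantities constant equal to $c_+$, so Lipschitz continuity is immediate. Assume therefore $\nuu(\LL)>0$, which yields the uniform lower bound $\nuu_t(\ell(|\cdot-\eta|))\ge \nuu(\LL)\ell_{\ms{min}}>0$ valid for every $\eta\in W$ and every $t\in I$.

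For part $(i)$, the key observation is that the denominator $\nuu_t(\ell(|\cdot-y_t|))$ of $\SIR(x_t,y_t,\nuu_t)$ does not depend on $x$, so only the numerator $\ell(|x_t-y_t|)$ is perturbed when passing from $x$ to $x'$. Using the Lipschitz constant $J_2$ of $\ell$ together with the reverse triangle inequality, I obtain $|\ell(|x_t-y_t|)-\ell(|x'_t-y_t|)|\le J_2\Vert x-x'\Vert$ uniformly in $t$ and $y$. Dividing by the uniform lower bound on the denominator and composing with the Lipschitz function $g$ produces a Lipschitz estimate on $|D(x_t,y_t,\nuu_t)-D(x'_t,y_t,\nuu_t)|$ that is uniform in $t$, which then transfers to the sup-norm estimate $\Vert\bar D(x,y,\nuu)-\bar D(x',y,\nuu)\Vert\le C\Vert x-x'\Vert$.

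For the uplink in $(ii)$, I decompose $R(x_t,o,\nuu_t)=\max\{D(x_t,o,\nuu_t),\,\nuu_t\text{-}\esssup_{\zeta\in W}\G(x_t,\zeta,o,\nuu_t)\}$. The direct term is Lipschitz in $x$ by part $(i)$. For the essential-supremum term, I use that $\G(x_t,\zeta,o,\nuu_t)=\min\{D(x_t,\zeta,\nuu_t),D(\zeta,o,\nuu_t)\}$ where the second argument of the minimum does not involve $x$, while by the same argument as in $(i)$ (applied with $\zeta$ in place of $y$) one has $|D(x_t,\zeta,\nuu_t)-D(x'_t,\zeta,\nuu_t)|\le C\Vert x-x'\Vert$ uniformly in $\zeta$ and $t$. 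Since $\min$ and $\max$ are $1$-Lipschitz and $|\esssup f-\esssup g|\le \esssup|f-g|$ for bounded functions, combining these estimates gives the Lipschitz continuity of $x\mapsto \bar R(x,o,\nuu)$.

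For the downlink $x\mapsto \bar R(o,x,\nuu)$, the only additional point is that both the numerator $\ell(|x_t|)$ and the denominator $\nuu_t(\ell(|\cdot-x_t|))$ of $\SIR(o,x_t,\nuu_t)$ now depend on $x_t$. The denominator is Lipschitz in $x_t$ with constant $J_2\nuu(\LL)$ by the same computation as in the proof of Lemma~\ref{PathRegularities}, and it is bounded below by $\nuu(\LL)\ell_{\ms{min}}$, so the quotient, and thus $D(o,x_t,\nuu_t)$, is Lipschitz in $x$ uniformly in $t$. The essential-supremum term is handled exactly as in the uplink case by writing $\G(o,\zeta,x_t,\nuu_t)=\min\{D(o,\zeta,\nuu_t),D(\zeta,x_t,\nuu_t)\}$, where the first argument is independent of $x$ and the second is Lipschitz in $x_t$ uniformly in $\zeta$ by the same reasoning. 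No step presents a substantial obstacle; the proof is essentially the argument of Lemma~\ref{PathRegularities} with the roles of the time and spatial variables interchanged.
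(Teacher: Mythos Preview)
Your proof is correct and follows essentially the same approach as the paper. The only stylistic difference is that you invoke the general inequality $|\esssup f-\esssup g|\le\esssup|f-g|$ to handle the relay term, whereas the paper unpacks this via an $\varepsilon$-argument (choosing a near-maximizer $\eta$ outside a suitable null set); these are equivalent formulations of the same estimate. You also spell out more carefully than the paper (which merely says ``analogous'') why the downlink case requires controlling the $x$-dependence of the denominator $\nuu_t(\ell(|\cdot-x_t|))$ in addition to the numerator, which is a nice touch.
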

\begin{proof}
As above, note that if $\nuu(\LL)=0$, then by the definition of $g$, $\bar D$ and $\bar R$ are constant and hence continuous. Let $\nuu(\LL)>0$. Next we show that $x\mapsto \overline\SIR(x,y,\nuu)$ is continuous. Indeed, for any $x,x'\in\LL$, we have 
\begin{align*}
\Vert\SIR(x,y,\nuu)-\SIR(x',y,\nuu)\Vert&\le \frac{1}{\ell_{\ms{min}}\nuu(\LL)}\sup_{t\in I}|\ell(|x_t-y_t|)-\ell(|x'_t-y_t|)|\le\frac{J_2}{\ell_{\ms{min}}\nuu(\LL)} \Vert x-x'\Vert
\end{align*}
where the Lipschitz parameter is independent of $y$.
Now, since $g$ is assumed to be Lipschitz continuous, part $(i)$ follows from the definition of $\bar D$.
For $\bar R(x,o,\nuu)$ in part $(ii)$ it suffices to show that $x\mapsto (\nuu_t\text{-}\esssup_{\eta\in W}\Gamma(x_t,\eta,o,\nuu_t))_{t\in I}$ is Lipschitz continuous. 
Let $\e>0$ be arbitrary, $t\in I$ and $x,x'\in \LL$ then we have
\begin{align}
\label{Est3}
\nuu_t\text{-}\esssup_{\eta\in W}\Gamma(x_t,\eta,o,\nuu_t)-\nuu_t\text{-}\esssup_{\eta\in W}\Gamma(x'_t,\eta,o,\nuu_t)\le\Gamma(x_t,\eta,o,\nuu_t)-\Gamma(x'_t,\eta,o,\nuu_t)+2\e
\end{align}
for some $\eta=\eta(x_t)\in N^c$ where $N=N(x'_t)$ is a $\nuu_t$-nullset. Since $\G$ is given as a maximum of Lipschitz continuous functions, with parameter independent of $\eta$, in the r.h.s.~of \eqref{Est3} we can further estimate
\begin{align*}
\label{Est4}
\Gamma(x_t,\eta,o,\nuu_t)-\Gamma(x'_t,\eta,o,\nuu_t)\le \a|x_t-x'_t|
\end{align*}
for some constant $\a>0$. Sending $\e$ to zero, using the symmetry in $x$ and $x'$ and taking suprema over $t$, gives the Lipschitz continuity. For $\bar R(o,x,\nuu)$ the proof is analogous. 
\end{proof}

The following results are for the discretized setting. Note, that in case of relayed communication, the QoS of a given user is very sensitive to the distribution of the surrounding users. This is due to the fact, that the disappearance of possible relays might lead to a sudden decrease in QoS. This is captured by the fact, that the function $\nuu\mapsto \bar R(x,o,\nuu)$ is only l.s.c.
\begin{Lemma}
\label{upSemContLem} 
For all $u,v\in\Pi_\de$, the maps $\nuu\mapsto \bar D(u,v,\nuu)$, $\nuu\mapsto \bar R(u,o,\nuu)$ and $\nuu\mapsto \bar R(o,u,\nuu)$ from $\mc{M}(\Pi_\de)\to [0,\infty)^{I_\de}$ are continuous, l.s.c.~and l.s.c.~respectively.
\end{Lemma}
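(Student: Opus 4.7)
The key observation is that $\Pi_\de$ is a finite set, so $\mc{M}(\Pi_\de)$ can be identified with $[0,\infty)^{\Pi_\de}$ equipped with its natural Euclidean topology, and the $\nuu_t$-essential suprema appearing in the definition of $R$ collapse to ordinary maxima over the support $S(\nuu)=\{w\in\Pi_\de:\, \nuu(\{w\})>0\}$. Since $[0,\infty)^{I_\de}$ is finite-dimensional under the supremum norm, the three asserted (semi)continuity properties amount to (semi)continuity in each coordinate separately, so it suffices to fix a time $t\in I_\de$ and work with the real-valued maps $\nuu\mapsto D(u_t,v_t,\nuu_t)$ and $\nuu\mapsto R(u_t,o,\nuu_t)$.

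For $\nuu\mapsto \bar D(u,v,\nuu)$, I would distinguish two cases. If $\nuu\ne 0$ then $\nuu_t(W_\de)=\nuu(\Pi_\de)>0$ and hence $\nuu_t(\ell(|\cdot-v_t|))\ge \ell_{\ms{min}}\nuu(\Pi_\de)>0$, so $\SIR(u_t,v_t,\nuu_t)$ is a ratio of linear functions of $\nuu$ with bounded-away-from-zero denominator, hence continuous; post-composition with the Lipschitz function $g$ preserves continuity. At $\nuu=0$ we have $D(u_t,v_t,\nuu_t)=c_+$ by convention, and for any sequence $\nuu^{(n)}\to 0$ with $\nuu^{(n)}\ne 0$ the denominator tends to $0$ while the numerator stays fixed, so $\SIR\to\infty$ and $g(\SIR)\to c_+$ by saturation. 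This yields continuity of $\bar D$ globally on $\mc{M}(\Pi_\de)$.

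For $\nuu\mapsto \bar R(u,o,\nuu)$, the discrete reduction of the essential supremum gives
\begin{equation*}
R(u_t,o,\nuu_t)=\max\Big\{D(u_t,o,\nuu_t),\,\max_{w\in S(\nuu)}\Gamma(u_t,w_t,o,\nuu_t)\Big\},
\end{equation*}
where the inner maximum is read as $0$ when $S(\nuu)=\emptyset$. Each map $\nuu\mapsto \Gamma(u_t,w_t,o,\nuu_t)=\min\{D(u_t,w_t,\nuu_t),D(w_t,o,\nuu_t)\}$ is continuous on $\mc{M}(\Pi_\de)$ by the previous step. The only obstacle to continuity of $R$ is the support map $\nuu\mapsto S(\nuu)$, which is merely lower-semicontinuous: coordinatewise convergence $\nuu^{(n)}\to\nuu$ forces $S(\nuu^{(n)})\supseteq S(\nuu)$ for $n$ large, but not the reverse inclusion. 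Consequently, for every fixed $w\in S(\nuu)$ we obtain
\begin{equation*}
\liminf_{n\to\infty}\max_{w'\in S(\nuu^{(n)})}\Gamma(u_t,w'_t,o,\nuu^{(n)}_t)\ge \lim_{n\to\infty}\Gamma(u_t,w_t,o,\nuu^{(n)}_t)=\Gamma(u_t,w_t,o,\nuu_t),
\end{equation*}
and taking the maximum over $w\in S(\nuu)$, together with continuity of $D(u_t,o,\cdot)$ and of the maximum, yields lower semicontinuity of $R(u_t,o,\cdot)$ at each $t\in I_\de$. The downlink case $\bar R(o,u,\nuu)$ is handled by exactly the same argument with the roles of transmitter and receiver swapped.

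The only non-routine ingredient is recognizing that the $\nuu_t$-essential supremum over $W$ in the definition of $R$ reduces, in the finite setting $\Pi_\de$, to a maximum over the discrete support $S(\nuu)$; once this is in place, the asymmetry between $\bar D$ and $\bar R$ is forced by the fact that the support function of a finite-dimensional measure is l.s.c.~but not u.s.c.~-- exactly the ``vanishing relay'' mechanism highlighted in the paragraph preceding the lemma.
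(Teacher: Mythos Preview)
Your proposal is correct and follows essentially the same approach as the paper: both reduce to a coordinate-wise argument in $t$, use continuity of $\SIR$ (with a separate treatment of the zero measure) for $\bar D$, and obtain lower semicontinuity of $\bar R$ by rewriting the essential supremum as a maximum over the support and exploiting that supports can only grow along convergent sequences. The only cosmetic differences are that the paper works with the support of $\nuu_t$ in $W_\de$ rather than of $\nuu$ in $\Pi_\de$, and handles the zero-measure case via the threshold $\b_o$ rather than directly via the saturation of $g$ at $c_+$.
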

\begin{proof}
It suffices to show that the maps $\nu\mapsto D(\xi,\eta,\nu)$, $\nu\mapsto R(\xi,o,\nu)$ and $\nu\mapsto R(o,\xi,\nu)$, as maps from $\mc{M}(W_\de)$ to $[0,\infty)$, are continuous, respectively l.s.c. and l.s.c., for all $\xi\in W$. Let $\nu_n$ be a sequence in $\mc{M}(W_\de)$ which tends to $\nu_*$. First note that if $\nu_*(W_\de)=0$, then there exists $m\in\N$ such that $\nu_n(W_\de)\le \b_o$ for all $n\ge m$, which implies that $R(o,\xi,\nu_n)=R(o,\xi,\nu_*)=R(\xi,o,\nu_n)=R(\xi,o,\nu_*)=D(\xi,o,\nu_n)=D(\xi,o,\nu_*)=c_+$ for all $n\ge m$. Second, assume that $\nu_*(\eta)>0$ for some $\eta\in W_\de$, then the continuity of $\nu\mapsto\SIR(\xi,\eta,\nu)$ at $\nu_*$ implies the continuity of $D$ and $\G$ at $\nu_*$. This is the first part of the statement. Moreover, since we work in the discrete setting, the essential supremum in the definition of $R(\xi,o,\nu)$ and $R(o,\xi,\nu)$ can always be written as a maximum and, for the uplink, it suffices to prove that $\nu\mapsto\max_{\eta\in W}\one\{\nu(\eta)>0\}\Gamma(\xi,\eta,o,\nu)$ is l.s.c. Furthermore, since $\mc{M}(W_\de)$ is finite dimensional, there exists $m\in\N$ such that $\nu_n(\eta)>0$ for all $n\ge m$ and all $\eta$ with $\nu_*(\eta)>0$. For such $n$ we have 
$$\max_{\eta\in W}\one\{\nu_n(\eta)>0\}\Gamma(\xi,\eta,o,\nu_n)\ge\max_{\eta\in W}\one\{\nu_*(\eta)>0\}\Gamma(\xi,\eta,o,\nu_n)$$
where the r.h.s.~tends to $\max_{\eta\in W}\one\{\nu_*(\eta)>0\}\Gamma(\xi,\eta,o,\nu_*)$ by continuity. But this is lower semicontinuity. For the relayed downlink, analogue arguments apply.
\end{proof}
Let us call a function $f:\,[0,\infty)^m\to[-\infty,\infty)$ decreasing, if $f$ is decreasing w.r.t.~the partial order on $[0,\infty)^m$ given by $x\le y$ if and only if $x_i\le y_i$ for all $1\le i\le m$. $f$ is called increasing if $-f$ is decreasing. Further we call a function $g:\,[0,\infty)^n\to\R^m$ u.s.c~if $g$ is u.s.c.~as a mapping in every coordinate $1\le i\le m$ in the image space. $g$ is called l.s.c.~if $-g$ is u.s.c. We will need the following general auxiliary result on the composition of semicontinuous functions.
\begin{Lemma}
\label{uscLem}
Let $f:\,[0,\infty)^m\to[-\infty,\infty)$ and $g:\,[0,\infty)^n\to[0,\infty)^m$, where $g$ maps bounded sets to bounded sets and $n,m\in\N$. 
\begin{enumerate}
\item[(i)] If $g$ is l.s.c.~and $f$ is decreasing and u.s.c., then $f\circ g$ is u.s.c.
\item[(ii)] If $g$ is u.s.c.~and $f$ is decreasing and l.s.c., then $f\circ g$ is l.s.c.
\item[(iii)] If $g$ is u.s.c.~and $f$ is increasing and u.s.c., then $f\circ g$ is u.s.c.
\item[(iv)] If $g$ is l.s.c.~and $f$ is increasing and l.s.c., then $f\circ g$ is l.s.c.
\end{enumerate}
\end{Lemma}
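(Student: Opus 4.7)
The plan is to argue all four cases by a uniform sequential subsequence argument. Fix $x_0\in[0,\infty)^n$ and a sequence $x_k\to x_0$; put $y_k=g(x_k)\in[0,\infty)^m$ and $y_0=g(x_0)$. Since $\{x_k\}$ is bounded and $g$ maps bounded sets to bounded sets, $\{y_k\}$ is a bounded sequence in $[0,\infty)^m$, so from any subsequence we can extract a further (componentwise) convergent subsequence.

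For case (i), we show $\limsup_k f(g(x_k))\le f(g(x_0))$. Pass to a subsequence $(k_j)$ with $f(y_{k_j})\to\limsup_k f(y_k)$, then refine so that $y_{k_j}\to y^*$ for some $y^*\in[0,\infty)^m$. Lower semicontinuity of each coordinate $g_i$ gives $y^*_i=\lim_j g_i(x_{k_j})\ge\liminf_k g_i(x_k)\ge g_i(x_0)=y_{0,i}$, hence $y^*\ge y_0$ componentwise. Since $f$ is decreasing, $f(y^*)\le f(y_0)$, and since $f$ is upper semicontinuous at $y^*$, $\limsup_j f(y_{k_j})\le f(y^*)$. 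Chaining these bounds yields the desired inequality.

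Cases (ii)--(iv) follow the same template, only reversing directions: in (ii) choose $(k_j)$ so that $f(y_{k_j})\to\liminf_k f(y_k)$ and $y_{k_j}\to y^*$, use u.s.c.\ of $g$ to conclude $y^*\le y_0$, then $f(y^*)\ge f(y_0)$ by the decreasing property of $f$, and $\liminf_j f(y_{k_j})\ge f(y^*)$ by l.s.c.\ of $f$. In (iii) and (iv) the monotonicity step is reversed: an inequality $y^*\le y_0$ combined with $f$ increasing gives $f(y^*)\le f(y_0)$, and $y^*\ge y_0$ with $f$ increasing gives $f(y^*)\ge f(y_0)$; the semicontinuity step is handled exactly as before.

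The only delicate point worth checking carefully is that $f$ is allowed to take the value $-\infty$, but this causes no trouble: in cases (i) and (iii) we only use $\limsup_j f(y_{k_j})\le f(y^*)$, which is trivially preserved when $f(y^*)=-\infty$ only in the sense that it forces the whole $\limsup$ to be $-\infty$ as well (via u.s.c.), and in cases (ii) and (iv) the inequality $f(y^*)\ge f(y_0)$ combined with l.s.c.\ is vacuous when $f(y_0)=-\infty$. The main (mild) obstacle is simply the bookkeeping with vector-valued $g$ and the coordinatewise semicontinuity convention, which the bounded-image hypothesis and a single subsequence extraction resolve cleanly.
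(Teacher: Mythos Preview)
Your argument is correct and follows essentially the same approach as the paper: pass to a subsequence realizing the $\limsup$, use the bounded-image hypothesis to extract a convergent sub-subsequence $y_{k_j}\to y^*$, compare $y^*$ with $g(x_0)$ via the coordinatewise semicontinuity of $g$, and then apply the monotonicity and semicontinuity of $f$. The paper proves only case (i) in detail and remarks that the others are analogous, just as you do.
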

\begin{proof}
We only prove the first claim, since the others can be shown using similar arguments. Assume that $\nu_k\in[0,\infty)^n$ are such that $\lim_{k\to\infty}\nu_k=\nu_*$ for some $\nu_*\in [0,\infty)^n$. Then, we have to show that $\limsup_{k\to\infty}f(g(\nu_k))\le f(g(\nu_*))$. After passing to a subsequence, we may replace the $\limsup$ on the l.h.s.~by a $\lim$. Since $g$ maps bounded sets to bounded sets, we may pass to a further subsequence and assume that $g(\nu_k)$ converges to some $\nu'\in[0,\infty)^m$. Since $g$ is l.s.c.~$\nu'_i=\lim_{k\to\infty}g_i(\nu_k)\ge g_i(\nu_*)$ for all $1\le i\le m$ and thus, since $f$ is decreasing, we have $f(\nu')\le f(g(\nu_*))$. Hence, using the upper semicontinuity of $f$, we arrive at
$$\lim_{k\to\infty}f(g(\nu_k))\le f(\nu')\le f(g(\nu_*)).$$
as required.
\end{proof}
\begin{Remark}
\label{USC}
By part (i) of the above lemma we have, under the assumptions that $\tau$ is u.s.c.~and decreasing, that
the map $\nuu\mapsto\tau(\bar R(u,o,\nuu))$ is u.s.c., where we also use Lemma \ref{upSemContLem}. Moreover, part (iii) of the above lemma can be used also to show that the map $\nuu\mapsto F(\nuu[\tau])$ appearing in 
Proposition \ref{LDP_Discr} is u.s.c.~for any increasing and u.s.c.~function $F$.
\end{Remark}

\subsection{Sprinkling construction}\label{Sprinkling}
As mentioned in the paragraph after Proposition~\ref{LDP_Discr}, the main difficulty in analyzing the empirical measures $L_{\lambda}^\rr[\ttau]$ comes from the discontinuity of the indicators $\one\{L^\rr_{\lambda}(u)>0\}$, $u\in \Pi_\delta$. In other words, the configurations that constitute obstructions in applying the contraction principle from large-deviation theory~\cite[Theorem 4.2.1]{dz98} are those exhibiting $\delta$-discretized trajectories with a small but non-zero number of users. Now, we show that a small increase in the intensity of the Poisson point process provides us with a sufficient amount of additional randomness allowing us to exclude such pathological configurations. In other words, we make use of a sprinkling argument in the spirit of~\cite{sprinkling}.

In order to perform the sprinkling operation with parameter $\e_0\in(0,1)$, we define $X^{\lambda}_\delta$ and $X^{\lambda'}_\delta$ as above with $\lambda'=(1+\e_1)\lambda$, where we put
$\e_1=2\varepsilon_0\k_{\delta}^{-1}$. In the following, we always assume that $\delta\in\mathbb{B}$ is sufficiently small to ensure that $\kappa_\delta\le1$.
Now, for $u\in \Pi_\delta$ we put
$$Q=\{u\in \Pi_\delta:\, L_{\lambda'}^\rr(u)\le\e_0\}\qquad\text{ and }\qquad V=\{u\in \Pi_\delta:\, L_{\lambda}^\rr(u)=0\}$$
denote the sets of all \emph{quasi-empty} and \emph{virtual} sites of $\Pi_\delta$, respectively.
We write 
$$E_{\e_0}=\{Q\subset V\}$$ 
for the event that all quasi-empty sites are virtual.
Similarly, we introduce the event 
$$E'_{\e_0}=\{X^{\lambda}_\delta(\Pi_\delta)\ge(1-\e_2)X^{\lambda'}_\delta(\Pi_\delta)\},$$ 
where $\e_2=4\e_1\b_o^{-1}\#\Pi_\delta(1+\mu^\rr(\Pi_\delta))$ and $\# \Pi_\de$ denotes the number of space-time sub-cubes in the discretization $\Pi_\de$.
Now, the following two auxiliary results, formalize the sprinkling heuristic described above.

\begin{Lemma}
\label{condProbLem}
For all sufficiently small $\e_0\in(0,1)$ there exists $\lambda_0=\lambda_0(\e_0)$ such that 
$$\P(E_{\e_0}\cap E'_{\e_0}|X^{\lambda'}_\delta)\ge\exp(-\sqrt{\e_0}\lambda)\one\{L^\rr_{\lambda'}(\Pi_\delta)\ge \b_o/2\}$$
holds almost surely for all $\lambda\ge\lambda_0$.
\end{Lemma}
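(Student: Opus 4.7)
The plan is to realize $X^\lambda_\delta$ and $X^{\lambda'}_\delta$ jointly via the superposition coupling $X^{\lambda'}=X^\lambda\uplus X^{\mathsf{spr}}$, in which $X^{\mathsf{spr}}$ is an independent Poisson process on $\LL$ with intensity $\varepsilon_1\lambda\muu$. Under this coupling, conditional on $X^{\lambda'}_\delta$ each point of $X^{\lambda'}_\delta$ is independently labelled as belonging to $X^\lambda_\delta$ with probability $p:=\lambda/\lambda'=1/(1+\varepsilon_1)$. In particular, the family $(X^\lambda_\delta(u))_{u\in\Pi_\delta}$ is conditionally independent with $X^\lambda_\delta(u)\sim\mathrm{Bin}(X^{\lambda'}_\delta(u),p)$, and I work exclusively with this description.

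For $E_{\varepsilon_0}$, the definition of $Q$ gives $X^{\lambda'}_\delta(u)\le\varepsilon_0\lambda'$ on $u\in Q$, so independence of the thinnings yields
$$\P\big(E_{\varepsilon_0}\,|\,X^{\lambda'}_\delta\big)=\prod_{u\in Q}(1-p)^{X^{\lambda'}_\delta(u)}=\Big(\tfrac{\varepsilon_1}{1+\varepsilon_1}\Big)^{N_Q},\qquad N_Q:=\sum_{u\in Q}X^{\lambda'}_\delta(u)\le\varepsilon_0\lambda'\#\Pi_\delta.$$
Inserting $\varepsilon_1=2\varepsilon_0\k_\delta^{-1}$ and $\lambda'=(1+\varepsilon_1)\lambda$, a short computation shows $\log\P(E_{\varepsilon_0}\,|\,X^{\lambda'}_\delta)\ge-C_\delta\,\varepsilon_0|\log\varepsilon_0|\,\lambda$ for a constant $C_\delta$ depending only on $\delta$ (through $\k_\delta$ and $\#\Pi_\delta$). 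Since $\sqrt{\varepsilon_0}|\log\varepsilon_0|\to0$, this is comfortably larger than $-\tfrac12\sqrt{\varepsilon_0}\lambda$ once $\varepsilon_0$ is small enough.

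For $E'_{\varepsilon_0}$ I further condition on $E_{\varepsilon_0}$: then $X^\lambda_\delta(\Pi_\delta)=X^\lambda_\delta(Q^c)$ is conditionally $\mathrm{Bin}(N-N_Q,p)$ with $N:=X^{\lambda'}_\delta(\Pi_\delta)$. On the event $\{L^\rr_{\lambda'}(\Pi_\delta)\ge\b_o/2\}$, which provides the only non-trivial case, one has $N\ge\lambda'\b_o/2$ whereas $N_Q\le\varepsilon_0\lambda'\#\Pi_\delta$. Combining these with $p\ge1-\varepsilon_1$ and the prescribed value $\varepsilon_2=4\varepsilon_1\b_o^{-1}\#\Pi_\delta(1+\muu^\rr(\Pi_\delta))$, one checks that the conditional mean satisfies $p(N-N_Q)\ge(1-\varepsilon_2/2)N$. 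A Hoeffding bound on the binomial $X^\lambda_\delta(Q^c)$ then gives
$$\P\big((E'_{\varepsilon_0})^c\,\big|\,E_{\varepsilon_0},X^{\lambda'}_\delta\big)\le\exp\big(-\tfrac12\varepsilon_2^2 N\big)\le\exp(-c(\varepsilon_0,\delta)\lambda),$$
so that $\P(E'_{\varepsilon_0}\,|\,E_{\varepsilon_0},X^{\lambda'}_\delta)\ge1/2$ once $\lambda\ge\lambda_0(\varepsilon_0)$ is large enough. Multiplying this estimate with the lower bound on $\P(E_{\varepsilon_0}\,|\,X^{\lambda'}_\delta)$ and absorbing the factor $1/2$ into the exponent for sufficiently large $\lambda$ delivers the stated inequality.

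I expect the principal obstacle to be bookkeeping: one needs $\varepsilon_1$ small enough that the thinning cost for $E_{\varepsilon_0}$ only contributes at scale $\sqrt{\varepsilon_0}\lambda$, yet the same $\varepsilon_1$, via $\varepsilon_2$, must dominate both the bias $\varepsilon_1$ of the binomial mean and the relative loss $N_Q/N\lesssim\varepsilon_0\#\Pi_\delta/\b_o$ incurred on the $(Q,E_{\varepsilon_0})$-slice. The prescribed choices of $\varepsilon_1$ and $\varepsilon_2$ are tight enough for both demands precisely because $|\log\varepsilon_1|\sim|\log\varepsilon_0|$ is only logarithmic in $\varepsilon_0$, so that the exponential cost of the thinning remains sub-linear in $\sqrt{\varepsilon_0}$ while the Hoeffding gap stays genuinely of order $\lambda$.
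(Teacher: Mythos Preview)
Your proof is correct and follows essentially the same strategy as the paper: realize $X^\lambda_\delta$ as a $p=1/(1+\varepsilon_1)$-thinning of $X^{\lambda'}_\delta$, bound $\P(E_{\varepsilon_0}\mid X^{\lambda'}_\delta)$ by the probability that every point in $Q$ is removed, and then show separately that the retention on $Q^c$ suffices for $E'_{\varepsilon_0}$ with probability bounded away from zero. The only difference is in this last step: the paper introduces an auxiliary per-site event $E''_{\varepsilon_0}=\{X^\lambda_\delta(u)\ge(1-\varepsilon_1)X^{\lambda'}_\delta(u)\text{ for all }u\in\Pi_\delta\setminus Q\}$, checks $E''_{\varepsilon_0}\subset E'_{\varepsilon_0}$ on the good event, and uses the conditional independence of $E_{\varepsilon_0}$ and $E''_{\varepsilon_0}$ together with a site-wise law-of-large-numbers argument to get a uniform constant lower bound $c_1$; you instead condition on $E_{\varepsilon_0}$, aggregate the surviving points on $Q^c$ into a single binomial, and apply Hoeffding. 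Both variants yield a constant lower bound for the $Q^c$ contribution and combine it with the same $(\varepsilon_1/(1+\varepsilon_1))^{N_Q}$ cost on $Q$, so the approaches are interchangeable.
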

\begin{proof}
The proof is based on the observation that $X^{\lambda}_\delta$ is obtained from $X^{\lambda'}_\delta$ by independent thinning with survival probability $1/(1+\varepsilon_1)$, see for example \cite[Section 5.1]{Kingman93}. In other words, for every $u\in \Pi_\delta$ there exist $N'_{u}=X^{\lambda'}_\delta(u)$ independent $\ms{Ber}(1/(1+\e_1))$-distributed random variables $\{U_{k}(u)\}_{1\le k\le N'_{u}}$ such that 
$$X^{\lambda}_\delta(u)=\sum_{k=1}^{N'_u}U_k(u).$$

In terms of the Bernoulli variables, $E_{\e_0}$ is the event that $U_k(u)=0$ holds for all $u\in Q$ and $1\le k\le N'_u$.
Now, we let $E''_{\e_0}$ denote the event that 
$X^{\lambda}_\delta(u)\ge (1-\e_1)X^{\lambda'}_\delta(u)$
holds for all $u\in\Pi_\delta\setminus Q$.
Since $\#Q\le \#\Pi_\delta$, this implies that $X^{\lambda}_\delta(\Pi_\delta)$ is bounded below by
\begin{align*}
 (1-\e_1)X^{\lambda'}_\delta(\Pi_\delta\setminus Q)=(1-\e_1)(X^{\lambda'}_\delta(\Pi_\delta)-X^{\lambda'}_\delta(Q))\ge(1-\e_1)(X^{\lambda'}_\delta(\Pi_\delta)-\#\Pi_\delta\e_0\lambda').
\end{align*}
If additionally $\{L^\rr_{\lambda'}(\Pi_\delta)\ge \b_o/2\}$ occurs, then we may extend the above estimation as follows
\begin{align*}
X^{\lambda}_\delta(\Pi_\delta)\ge(1-\e_1)(1-2\#\Pi_\delta\e_0\b_o^{-1})X^{\lambda'}_\delta(\Pi_\delta)\ge(1-\e_2)X^{\lambda'}_\delta(\Pi_\delta).
\end{align*}
Therefore, $E''_{\e_0}\cap\{L^\rr_{\lambda'}(\Pi_\delta)\ge \b_o/2\}\subset E'_{\e_0}$ and it remains to bound $\P(E_{\e_0} \cap E''_{\e_0}|X^{\lambda'}_\delta)$ from below. First note that
$$\P(E''_{\e_0}|X^{\lambda'}_\delta)\ge\P\big(U_k(u)=1\text{ for all } u\in\Pi_\de\setminus Q\text{ and }1\le k\le N'_u|X^{\lambda'}_\delta\big)\ge(1+\e_1)^{-\sum_{u\in \Pi_\de}N'_u}.$$ 
By the law of large numbers, if the $N'_u$ in $X^{\lambda'}_\delta$ is large, $\P(X^{\lambda}_\delta(u)\ge (1-\e_1)X^{\lambda'}_\delta(u)|X^{\lambda'}_\delta)$ is close to one. Together, this implies that there exists $c_1>0$ such that $\P(E''_{\e_0}|X^{\lambda'}_\delta)\ge c_1$ holds almost surely for every $\lambda\ge1$. Hence, since conditioned on $X^{\lambda'}_\delta$, $E_{\e_0}$ and $E''_{\e_0}$ are independent, we obtain that 
\begin{align*}
\P(E_{\e_0}\cap E''_{\e_0} |X^{\lambda'}_\delta)=\P(E_{\e_0}|X^{\lambda'}_\delta)\P(E''_{\e_0}|X^{\lambda'}_\delta)\ge(\e_12^{-1})^{\sum_{u\in Q}N'_u}c_1\ge(\e_12^{-1})^{\e_0\lambda'\#\Pi_\delta}c_1.
\end{align*}
In particular, observing that $-\e_0\log(\e_12^{-1})\in o(\sqrt{\e_0})$ concludes the proof.
\end{proof}
In the following, we extend the definition of $V$ to measures on $\Pi_\delta$ and define $V(\nuu)=\{u\in \Pi_\delta:\, \nuu(u)=0\}$.
Furthermore, we put 
$$E^*_{\e_0}=E_{\e_0}\cap\{V(L_\lambda^\rr)=V(L_{\lambda'}^\rr)\}.$$
\begin{Lemma}
\label{condProbLem2}
For all sufficiently small $\e_0\in(0,1)$ there exists $\lambda_0=\lambda_0(\e_0)$ such that 
$$\P(E^*_{\e_0}|X^{\lambda}_\delta)\ge \exp(-\sqrt{\e_0}\lambda)$$
holds almost surely for all $\lambda\ge \lambda_0$. 
\end{Lemma}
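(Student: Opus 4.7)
The plan is to use the Poisson superposition representation $X^{\la'}_\de = X^\la_\de + X''_\de$, where $X''_\de$ is an independent Poisson random field on $\Pi_\de$ with intensity measure $\e_1 \la \muu^\rr$. In these terms the event $E^*_{\e_0}$ splits into two site-wise requirements: for every $u \in \Pi_\de$ with $X^\la_\de(u) = 0$ one needs $X''_\de(u) = 0$ (this yields $V(L_\la^\rr) = V(L_{\la'}^\rr)$), while for every $u$ with $X^\la_\de(u) > 0$ one needs $X^\la_\de(u) + X''_\de(u) > \e_0 \la'$ (which is exactly $E_{\e_0}$). Conditionally on $X^\la_\de$ the counts $\{X''_\de(u)\}_{u \in \Pi_\de}$ are independent $\ms{Poi}(\e_1 \la \muu^\rr(u))$ random variables, so the conditional probability factorizes, $\P(E^*_{\e_0}|X^\la_\de) = \prod_u q_u$, and it suffices to bound each factor from below uniformly in $X^\la_\de$.

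For unoccupied sites $q_u = e^{-\e_1 \la \muu^\rr(u)}$, and the product of these factors dominates $\exp(-\e_1 \la \muu^\rr(\Pi_\de)) = \exp(-2\e_0 \la \muu(\LL)/\k_\de)$. Since $\muu(\LL)$ and $\k_\de$ are fixed positive constants, for $\e_0$ small enough this is at least $\exp(-\sqrt{\e_0}\la/2)$.

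The delicate case is an occupied site $u$ with $X^\la_\de(u) \le \e_0\la'$, where one needs $X''_\de(u) > \e_0 \la' - X^\la_\de(u)$. The Poisson mean $\e_1 \la \muu^\rr(u) = 2\e_0 \la \muu^\rr(u)/\k_\de$ can be much smaller than the threshold $\e_0 \la'$ when $\muu^\rr(u) \ll \k_\de$, so a concentration argument is unavailable. I would instead use the elementary one-atom bound $\P(\ms{Poi}(m) = k) \ge e^{-m}(m/k)^k$, applied with $m = \e_1 \la \muu^\rr(u)$ and $k = \lceil \e_0 \la' \rceil$. Since $\muu^\rr$ is supported on the finite set $\Pi_\de$, the quantity $m_* = \min\{\muu^\rr(u) : \muu^\rr(u) > 0\}$ is strictly positive and an elementary computation yields $-\log q_u \le C(\de) \e_0 \la$ with $C(\de)$ depending only on $\de$. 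With at most $\#\Pi_\de$ such occupied sites, the product of these factors contributes at least $\exp(-\#\Pi_\de C(\de) \e_0 \la)$.

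Combining the two contributions gives a lower bound of the form $\exp(-C'(\de) \e_0 \la)$, which is at least $\exp(-\sqrt{\e_0}\la)$ as soon as $\e_0$ is small enough that $\sqrt{\e_0} \le 1/C'(\de)$. The main obstacle is analyzing the occupied sites with $\muu^\rr(u) \ll \k_\de$: the sprinkling intensity is calibrated to the maximum mass $\k_\de$, so such sites receive proportionally few additional points and concentration near the threshold fails. What saves the argument is that the one-atom Poisson bound costs only a factor linear in $\e_0$, which is comfortably dominated by $\sqrt{\e_0}$ as $\e_0 \to 0$.
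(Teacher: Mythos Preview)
Your argument is correct and follows the same superposition strategy as the paper's proof. Both write $X^{\la'}_\de = X^\la_\de + X''_\de$ with $X''_\de$ an independent Poisson field of intensity $\e_1\la\muu^\rr$, observe that conditionally on $X^\la_\de$ the event $E^*_{\e_0}$ factorizes over sites, and handle unoccupied sites (where one needs $X''_\de(u)=0$, costing at most $\exp(-\e_1\la\muu^\rr(\Pi_\de))$) separately from occupied ones. The difference lies in the occupied sites. The paper restricts to the sub-event $H_{\e_0}=\{N''_u\ge\e_0\la'\text{ for all }u\notin V\}$ and asserts that the Poisson mean $(\la'-\la)\muu^\rr(u)$ exceeds the threshold $\e_0\la'$, after which a law-of-large-numbers argument gives a uniform constant lower bound $c_1$. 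That inequality, however, amounts to $\muu^\rr(u)>\k_\de/2+\e_0$ and can fail for sites with small $\muu^\rr$-mass, so the paper's step is incomplete as written. Your one-atom bound $\P(\ms{Poi}(m)=k)\ge e^{-m}(m/k)^k$, combined with the strictly positive $m_*=\min\{\muu^\rr(u):\muu^\rr(u)>0\}$ on the finite set $\Pi_\de$, sidesteps this issue entirely and yields the needed $\exp(-C(\de)\e_0\la)$ per occupied site without any mean-versus-threshold comparison. In short, the two proofs share the same architecture, but your treatment of occupied low-mass sites is the more careful one.
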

\begin{proof}
First, we note that $H_{\e_0}\subset E_{\e_0}$, where $H_{\e_0}$ denotes the event that $N''_u\ge\e_0\lambda'$ holds for all $u\in \Pi_\delta\setminus V(L_\lambda^\rr)$. Here $N''_u=X^{\lambda'}_\delta(u)-X^{\lambda}_\delta(u)$ is independent of $X^{\lambda}_\delta(u)$ and Poisson distributed with parameter $(\la'-\la)\muu^\rr(u)>\e_0\la'$. In particular, similarly to the proof of Lemma~\ref{condProbLem}, there exists $c_1>0$ such that $\P(H_{\e_0}|X^{\lambda}_\delta)\ge c_1$ holds almost surely for every $\la\ge1$.
Therefore, conditioned on $X^{\lambda}_\delta$, the independence of $\{V(L_\lambda^\rr)=V(L_{\lambda'}^\rr)\}$ and $H_{\e_0}$ gives that
\begin{align*}
\P(\{V(L_\lambda^\rr)=V(L_{\lambda'}^\rr)\}\cap H_{\e_0} |X^{\lambda}_\delta)=\P(V(L_\lambda^\rr)=V(L_{\lambda'}^\rr)|X^{\lambda}_\delta)\P(H_{\e_0}|X^{\lambda}_\delta).
\end{align*}
Since, the r.h.s.~is bounded below by $\exp(-\muu^\rr(\Pi_\delta)(\lambda'-\lambda))c_1$, we conclude the proof.
\end{proof}

\subsection{Relative entropies under linear perturbation}
Finally, it will be convenient to quantify the impact of multiplication of measures by scalars on relative entropies. 

\begin{Lemma}
\label{ent0Lem}
Let $a>0$ and $\nuu\in\mc{M}(\LL)$ be arbitrary. Then,
$$h(a\nuu|\muu)=ah(\nuu|\muu)+a\log(a)\nuu(\LL)+(1-a)\muu(\LL).$$
\end{Lemma}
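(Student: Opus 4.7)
The plan is a direct computation from the definition of relative entropy, splitting into the absolutely continuous case and the singular case.

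First, I would dispose of the trivial case: if $\nuu$ is not absolutely continuous with respect to $\muu$, then $a\nuu$ is also not absolutely continuous with respect to $\muu$ (since $a>0$ preserves null sets), so both sides equal $+\infty$ by the convention in the definition of $h$. So we may assume the Radon--Nikodym derivative $f=\d\nuu/\d\muu$ exists, in which case $\d(a\nuu)/\d\muu=af$ exists as well.

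Next, I would plug $af$ into the defining formula for relative entropy and expand the logarithm:
\begin{align*}
h(a\nuu|\muu)&=\int af(x)\log(af(x))\,\muu(\d x)-a\nuu(\LL)+\muu(\LL)\\
&=a\log(a)\int f(x)\,\muu(\d x)+a\int f(x)\log f(x)\,\muu(\d x)-a\nuu(\LL)+\muu(\LL).
\end{align*}
Using $\int f\,\d\muu=\nuu(\LL)$ for the first integral and substituting $\int f\log f\,\d\muu=h(\nuu|\muu)+\nuu(\LL)-\muu(\LL)$ (again from the definition) for the second, this becomes
\begin{align*}
h(a\nuu|\muu)&=a\log(a)\nuu(\LL)+a\bigl(h(\nuu|\muu)+\nuu(\LL)-\muu(\LL)\bigr)-a\nuu(\LL)+\muu(\LL)\\
&=ah(\nuu|\muu)+a\log(a)\nuu(\LL)+(1-a)\muu(\LL),
\end{align*}
after cancelling the $\pm a\nuu(\LL)$ terms and collecting the $\muu(\LL)$ contributions. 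This is the desired identity.

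There is essentially no obstacle here: the only subtle point is making sure the convention $0\log 0=0$ is respected so that sets where $f=0$ cause no trouble in the integral, but this is standard. The lemma is purely algebraic once the definition is applied.
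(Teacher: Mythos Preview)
Your proof is correct and follows essentially the same approach as the paper: dispose of the non-absolutely-continuous case, then plug the density $af$ into the definition of $h$ and expand $\log(af)=\log a+\log f$. The paper's version is simply more compressed, writing the result in one line as $h(a\nuu|\muu)=a\big(\int \log f \,\nuu(\d x)-\nuu(\LL)+\muu(\LL)\big)+a\log(a)\nuu(\LL)+(1-a)\muu(\LL)$, which is exactly your computation after recognizing $\int f\log f\,\d\muu=\int \log f\,\d\nuu$.
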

\begin{proof}
The claim is trivial if $\nuu$ is not absolutely continuous with respect to $\muu$. Otherwise, writing $f=\d\nuu/\d\muu$ we have that 
\begin{align*}
h(a\nuu|\muu)=a\Big(\int \log f \nuu(\d x)-\nuu(\LL)+\muu(\LL)\Big)+a\log(a) \nuu(\LL)+(1-a)\muu(\LL),
\end{align*}
as required.
\end{proof}

As a corollary, we obtain the following bounds on $h(a \nuu|\muu)$.
\begin{Corollary}
\label{ent0Cor}
Let $\nuu\in\mc{M}(\LL)$ and $\e\in(0,1/2)$ be arbitrary. Then,
$$h((1+\e)\nuu|\muu)\le (1+3\e)h(\nuu|\muu)+3\e\muu(\LL).$$
and
$$h((1-\e)\nuu|\muu)\ge(1-3\e)h(\nuu|\muu)-3\e\muu(\LL).$$
\end{Corollary}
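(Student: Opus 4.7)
The plan is to reduce both inequalities to the exact identity of Lemma \ref{ent0Lem} and then control the resulting remainder by two simple ingredients: elementary bounds on $(1\pm\e)\log(1\pm\e)$ and a linear bound of the form $\nuu(\LL)\le h(\nuu|\muu)+C\muu(\LL)$.

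For the first inequality, I would apply Lemma \ref{ent0Lem} with $a=1+\e$ to get
$$h((1+\e)\nuu|\muu)=(1+\e)h(\nuu|\muu)+(1+\e)\log(1+\e)\,\nuu(\LL)-\e\,\muu(\LL),$$
so the claim reduces to the inequality $(1+\e)\log(1+\e)\,\nuu(\LL)\le 2\e\, h(\nuu|\muu)+4\e\,\muu(\LL)$. For the second, applying Lemma \ref{ent0Lem} with $a=1-\e$ and isolating the $h(\nuu|\muu)$ terms reduces the claim to $-(1-\e)\log(1-\e)\,\nuu(\LL)\le 2\e\, h(\nuu|\muu)+4\e\,\muu(\LL)$. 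For $\e\in(0,1/2)$, the elementary estimates $\log(1+\e)\le\e$ and $-\log(1-\e)\le \e/(1-\e)\le 2\e$ give $(1+\e)\log(1+\e)\le 2\e$ and $-(1-\e)\log(1-\e)\le \e\le 2\e$, so both reductions follow from the single bound
$$\nuu(\LL)\le h(\nuu|\muu)+2\muu(\LL).$$

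To establish this last bound, I would note that if $\nuu$ is not absolutely continuous with respect to $\muu$ then $h(\nuu|\muu)=\infty$ and the claim is trivial. Otherwise, writing $f=\d\nuu/\d\muu$, the elementary one-variable inequality $t\log t\ge 2t-e$ (whose left side minus right side has its unique minimum $0$ at $t=e$) integrates against $\muu$ to yield $\int f\log f\,\d\muu\ge 2\nuu(\LL)-e\,\muu(\LL)$, hence
$$h(\nuu|\muu)=\int f\log f\,\d\muu-\nuu(\LL)+\muu(\LL)\ge \nuu(\LL)+(1-e)\,\muu(\LL),$$
which rearranges to $\nuu(\LL)\le h(\nuu|\muu)+(e-1)\muu(\LL)\le h(\nuu|\muu)+2\muu(\LL)$.

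There is no substantial obstacle here; the whole argument is an algebraic rearrangement of Lemma \ref{ent0Lem} plus two bookkeeping estimates. The only mildly non-obvious step is identifying that the auxiliary linear bound $\nuu(\LL)\le h(\nuu|\muu)+2\muu(\LL)$ is precisely what absorbs the $\nuu(\LL)$-factor arising on the right-hand side of Lemma \ref{ent0Lem}; once one writes down the Legendre-type inequality $t\log t\ge 2t-e$, everything falls into place.
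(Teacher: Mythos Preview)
Your proof is correct and follows essentially the same route as the paper: reduce via Lemma~\ref{ent0Lem} to the two inequalities $(1\pm\e)\log(1\pm\e)\,\nuu(\LL)\lessgtr \pm 2\e(h(\nuu|\muu)+2\muu(\LL))$, use the elementary bounds $(1+\e)\log(1+\e)\le 2\e$ and $(1-\e)\log(1-\e)\ge -2\e$, and finish with the auxiliary inequality $\nuu(\LL)\le h(\nuu|\muu)+2\muu(\LL)$. The only cosmetic difference is in the derivation of this last bound: the paper first applies Jensen to reduce to the scalar inequality $\nuu(\LL)\le h(\nuu(\LL)\,|\,\muu(\LL))+2\muu(\LL)$ and then optimizes, whereas you integrate the pointwise Legendre-type inequality $t\log t\ge 2t-e$ directly; both yield the same constant $e-1<2$.
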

\begin{proof}
By Lemma~\ref{ent0Lem} the claims are equivalent to 
$$(1+\e)\nuu(\LL)\log(1+\e)\le2\e(h(\nuu|\muu)+2\muu(\LL)).$$
and
$$(1-\e)\nuu(\LL)\log(1-\e)\ge-2\e(h(\nuu|\muu)+2\muu(\LL)).$$
First, by Jensen's inequality and an elementary optimization exercise,
$$\nuu(\LL)\le h(\nuu(\LL)|\muu(\LL))+2\muu(\LL).$$
Combining this inequality with the bounds
$(1+\e)\log(1+\e)\le 2\e$ and $(1-\e)\log(1-\e)\ge -2\e$ completes the proof.
\end{proof}
\begin{Remark}
Lemma~\ref{ent0Lem} and Corollary~\ref{ent0Cor} remain true if $\LL$ and $\muu$ are replaced by $\Pi_\delta$ and $\muu^\rr$, respectively.
\end{Remark}

\section{Proof of Proposition~\ref{translUpThm} and Proposition~\ref{LDP_Discr}}
\label{prop22Sec}
\subsection{Proof of Proposition~\ref{translUpThm}}
First note, that for all $u\in \Pi_\delta$ with $\nuu^\rr(u)=0$ we have $((1\pm\e)\nuu^\rr)[\ttau\circ\imath](u)=0$ and $(\nuu[\ttau])^\rr(u)=0$ and hence the inequalities are trivially satisfied.
Now, fix $\e>0$ and assume $u\in \Pi_\delta$ with $\nuu^\rr(u)>0$. 

\medskip
Let us introduce $C(\eta,\nu)\in\{D(\eta,o,\nu),R(\eta,o,\nu),D(o,\eta,\nu),R(o,\eta,\nu)\}$ for the different forms of communication where $\eta\in W$ and $\nu\in\mc{M}(W)$. Denote $\bar C(x,\nuu)$ the associated trajectory.

\medskip
We first prove the the upper bound. It suffices to find $\delta'=\delta'(\e)\in\mathbb{B}$ such that for all $\de\in\mathbb{B}\cap(0,\delta')$ and all $C$ we have
$$\sup_{x\in\rr^{-1}(u)}\tau_i(\bar C(x,\nuu))\le\tau_i(\imath(C(u_t,(1+\e)\nuu^\rr_t)_{t\in I_\de}))$$
for all $i\in\{1,\dots,4\}$.
Since the $\tau_i$ are decreasing, it suffices to find $\delta'=\delta'(\e)\in\mathbb{B}$ such that for all $\de\in\mathbb{B}\cap(0,\delta')$ and $x\in \rr^{-1}(u)$
\begin{equation}\label{InequalityProp21}
\begin{split}
\bar C(x,\nuu)\ge \imath(C(u_t,(1+\e)\nuu^\rr_t)_{t\in I_\de}).
\end{split}
\end{equation}
Let us first show that for all $x\in\rr^{-1}(u)$ and $y\in\rr^{-1}(v)$ with $u\in\Pi_\de$ we have
\begin{equation}\label{CentralInequalityProp21}
\begin{split}
\overline\SIR(x,y,\nuu)\ge\imath(\SIR(u_t,v_t,(1+\e)\nuu^\rr_t)_{t\in I_\de})
\end{split}
\end{equation}
for sufficiently small $\delta$. Using the definition of SIR this is equivalent to showing for all $t\in I_\de$ that
$$\frac{\ell(|u_t-v_t|)\nuu_s(\ell(|\cdot-y_s|))}{\ell(|x_s-y_s|)\nuu_t^\rr(\ell(|\cdot-v_t|))}-1\le\e$$
for all $t-\tfrac{\de T}{2}\le s<t+\tfrac{\de T}{2}$.
Note, the left hand side can be estimated as follows
\begin{equation}\label{Eps}
\begin{split}
&\Big|\frac{\ell(|u_t-v_t|)\nuu_s(\ell(|\cdot-y_s|))}{\ell(|x_s-y_s|)\nuu_t^\rr(\ell(|\cdot-v_t|))}-1\Big|\le\Big|\frac{\ell(|u_t-v_t|)\nuu_s(\ell(|\cdot-y_s|))-\ell(|x_s-y_s|)\nuu_t^\rr(\ell(|\cdot-v_t|))}{\ell(|x_s-y_s|)\nuu_t^\rr(\ell(|\cdot-v_t|))}\Big|\cr
&\hspace{1cm}\le\frac{\ell(|u_t-v_t|)}{\ell(|x_s-y_s|)}\Big|\frac{\nuu_s(\ell(|\cdot-y_s|))-\nuu_t^\rr(\ell(|\cdot-v_t|))}{\nuu_t^\rr(\ell(|\cdot-v_t|))}\Big|+\Big|\frac{\ell(|u_t-v_t|)-\ell(|x_s-y_s|)}{\ell(|x_s-y_s|)}\Big|\cr
&\hspace{1cm}\le\frac{\ell_{\ms{max}}}{\ell_{\ms{min}}^2}\frac{\sum_{w\in\Pi_\de}\int_{\rr^{-1}(w)}\nuu(\d z)|\ell(|z_s-y_s|)-\ell(|w_t-v_t|)|}{\nuu(\LL)}+\frac{J_2}{\ell_{\ms{min}}}|u_t-v_t-x_s+y_s|\cr
&\hspace{1cm}\le\frac{J_2\ell_{\ms{max}}}{\ell_{\ms{min}}^2}\sup_{w\in\Pi_\de}\sup_{z\in\rr^{-1}(w)}|z_s-y_s-w_t+v_t|+\frac{J_2}{\ell_{\ms{min}}}|u_t-v_t-x_s+y_s|\cr
&\hspace{1cm}\le \a_1\sup_{w\in\Pi_\de}\sup_{z\in\rr^{-1}(w)}|z_s-w_t|
\le \a_2\, \de\cr
\end{split}
\end{equation}
where $\a_1,\a_2$ are some constants involving also $r,T$ and $J_1$.
Since $g$ is assumed to be increasing, \eqref{CentralInequalityProp21} implies 
\begin{equation}\label{Central2InequalityProp21}
\begin{split}
\bar D(x,y,\nuu)\ge \imath(D(u_t,v_t,(1+\e)\nuu^\rr_t)_{i\in I_\de})
\end{split}
\end{equation}
for all $x\in\rr^{-1}(u)$ and $y\in\rr^{-1}(v)$.
Now, for every $C$, the inequality \eqref{InequalityProp21} can be derived from the inequality \eqref{Central2InequalityProp21}.
Indeed, for the direct up- and downlink cases \eqref{InequalityProp21} are implied by \eqref{Central2InequalityProp21} setting 
$y=v=o$ respectively $x=u=o$.
For the relayed uplink case $C(\eta,\nu)=R(\eta,o,\nu)$ we have to prove \eqref{InequalityProp21} only for the relaying component in $R(\eta,o,\nu)$ since the direct communication part we already verified. In other words, using Lemma \ref{EsssupLemma}, we show that for all $t\in I_\de$ and $x\in\rr^{-1}(u)$ we have
\begin{equation*}
\begin{split}
\nuu\text{-}\esssup_{y\in\LL}\Gamma(x_s,y_s,o,\nuu_s)\ge\nuu^\rr\text{-}\esssup_{v\in\Pi_\delta}\Gamma(u_t,v_t,o,(1+\e)\nuu_t^\rr)
\end{split}
\end{equation*}
for all $t-\tfrac{\de T}{2}\le s<t+\tfrac{\de T}{2}$.
Let us assume the supremum on the right hand side is attained in $v\in \Pi_\delta$ where necessarily $\nuu^\rr(v)>0$. Then it suffices to find $\delta'=\delta'(\e)\in\mathbb{B}$ such that for all $\delta\in\mathbb{B}\cap(0,\delta')$, $x\in\rr^{-1}(u)$, $y\in\rr^{-1}(v)$ and $t-\tfrac{\de T}{2}\le s<t+\tfrac{\de T}{2}$
\begin{equation*}
\begin{split}
\min\{D(x_s,y_s,\nuu_s),D(y_s,o,\nuu_s)\}\ge\min\{D(u_t,v_t,(1+\e)\nuu_t^\rr),D(v_t,o,(1+\e)\nuu_t^\rr)\},
\end{split}
\end{equation*}
but this can be done using \eqref{Central2InequalityProp21}.

Similarly, in the case of relayed downlink communication $C(\eta,\nu)=R(o,\eta,\nu)$, using the same argument as in the relayed uplink case, we need to show
\begin{equation*}
\begin{split}
\min\{D(o,y_s,\nuu_s),D(y_s,x_s,\nuu_s)\}\ge\min\{D(o,v_t,(1+\e)\nuu_t^\rr),D(v_t,x_t,(1+\e)\nuu_t^\rr)\}.
\end{split}
\end{equation*}
for all $x\in\rr^{-1}(u)$, $y\in\rr^{-1}(v)$, $t-\tfrac{\de T}{2}\le s<t+\tfrac{\de T}{2}$ and sufficiently small $\delta\in\mathbb{B}$. But this is also true using \eqref{Central2InequalityProp21}.

\medskip
For the lower bound, it suffices to find $\delta'=\delta'(\e)\in\mathbb{B}$ such that for all $\delta\in\mathbb{B}\cap(0,\delta')$, $x\in\rr^{-1}(u)$ and all $C$ we have
\begin{equation}\label{InequalityProp21Left}
\begin{split}
\bar C(x,\nuu)\le \imath(C(u_t,(1-\e)\nuu^\rr_t)_{t\in I_\de}).
\end{split}
\end{equation}
Again, we first show that for all $x\in\rr^{-1}(u)$ and $y\in\rr^{-1}(v)$ with $u\in\Pi_\de$ we have
\begin{equation*}\label{CentralInequalityProp21Left}
\begin{split}
\overline\SIR(x,y,\nuu)\le\imath(\SIR(u_t,v_t,(1-\e)\nuu^\rr_t)_{t\in I_\de})
\end{split}
\end{equation*}
for sufficiently small $\delta$, which is equivalent to showing for all $t\in I_\de$ that
$$1-\frac{\ell(|u_t-v_t|)\nuu_s(\ell(|\cdot-y_s|))}{\ell(|x_s-y_s|)\nuu_t^\rr(\ell(|\cdot-v_t|))}\le\e$$
for all $t-\tfrac{\de T}{2}\le s<t+\tfrac{\de T}{2}$. But this is true using again the estimate \eqref{Eps}. This implies 
\begin{equation}\label{Central2InequalityProp21Left}
\begin{split}
\bar D(x,y,\nuu)\le \imath(D(u_t,v_t,(1-\e)\nuu^\rr_t)_{i\in I_\de})
\end{split}
\end{equation}
for all $x\in\rr^{-1}(u)$ and $y\in\rr^{-1}(v)$.

For the direct up- and downlink cases \eqref{InequalityProp21Left} are implied by \eqref{Central2InequalityProp21Left} setting 
$y=v=o$ respectively $x=u=o$.
For the relayed uplink case $C(\eta,\nu)=R(\eta,o,\nu)$ we have to prove \eqref{InequalityProp21Left} only for the relaying component. In other words, we show that for all $x\in\rr^{-1}(u)$ and $t\in I_\de$
\begin{equation*}
\begin{split}
\nuu\text{-}\esssup_{y\in\LL}\Gamma(x_s,y_s,o,\nuu_s)\le\nuu^\rr\text{-}\esssup_{v\in\Pi_\delta}\Gamma(u_t,v_t,o,(1-\e)\nuu_t^\rr)
\end{split}
\end{equation*}
for all $t-\tfrac{\de T}{2}\le s<t+\tfrac{\de T}{2}$. Note that for all $x\in\rr^{-1}(u)$ and $t\in I_\de$
\begin{equation*}
\begin{split}
\nuu\text{-}\esssup_{y\in\LL}\Gamma(x_s,y_s,o,\nuu_s)=\nuu^\rr\text{-}\esssup_{v\in\Pi_\delta}\big(\nuu\text{-}\esssup_{y\in\rr^{-1}(v)}\Gamma(x_s,y_s,o,\nuu_s)\big)
\end{split}
\end{equation*}
and assume that this supremum is attained in $v\in \Pi_\delta$ where necessarily $\nuu^\rr(v)>0$. Then it suffices to find $\delta'=\delta'(\e)\in\mathbb{B}$ such that for all $\delta\in\mathbb{B}\cap(0,\delta')$, $x\in\rr^{-1}(u)$, $y\in\rr^{-1}(v)$ and $t\in I_\de$
\begin{equation*}
\begin{split}
\min\{D(x_s,y_s,\nuu_s),D(y_s,o,\nuu_s)\}\le\min\{D(u_t,v_t,(1-\e)\nuu_t^\rr),D(v_t,o,(1-\e)\nuu_t^\rr)\}
\end{split}
\end{equation*}
for all $t-\tfrac{\de T}{2}\le s<t+\tfrac{\de T}{2}$.
But this can be done using \eqref{Central2InequalityProp21Left}.

Similar, in the case of relayed downlink communication $C(\eta,\nu)=R(o,\eta,\nu)$, using the same argument as in the relayed uplink case, we need to show
\begin{equation*}
\begin{split}
\min\{D(o,y_s,\nuu_s),D(y_s,x_s,\nuu_s)\}\le\min\{D(o,v_t,(1-\e)\nuu_t^\rr),D(v_t,u_t,(1-\e)\nuu_t^\rr)\}
\end{split}
\end{equation*}
for all $x\in\rr^{-1}(u)$, $y\in\rr^{-1}(v)$, $t\in I_\de$, $t-\tfrac{\de T}{2}\le s<t+\tfrac{\de T}{2}$ and sufficiently small $\de$. But this is also true using \eqref{Central2InequalityProp21Left}. This finishes the proof.

\subsection{Proof of Proposition~\ref{LDP_Discr}}
The idea for the proof of Proposition~\ref{LDP_Discr} is to apply Varadhan's lemma~\cite[Lemmas 4.3.4, 4.3.6]{dz98} to a suitable functional on $L_\lambda^\rr$. More precisely,~\cite[Exercise 4.2.7]{dz98} implies that the independent random variables $\{L^\rr_\lambda(u)\}_{u\in \Pi_\delta}$ satisfy a large deviation principle with good rate function 
$$\{a_u\}_{u\in \Pi_\delta}\mapsto \sum_{u\in \Pi_\delta}h(a_u|\muu^\rr(u)).$$

\medskip\noindent
Let us start with the upper bound. In Remark \ref{USC}
we have seen that for every $u\in\Pi_\delta$ the map 
$$\nuu\mapsto \big( \tau_1(\bar R(u,o,\a\nuu)),\tau_2(\bar D(u,o,\a\nuu)),\tau_3(\bar R(o,u,\a\nuu)),\tau_4(\bar D(o,u,\a\nuu))\big)$$ 
is u.s.c. Hence, also the map $\nuu\mapsto(\a\nuu)[\ttau]$ is u.s.c. Finally, another application of Lemma~\ref{uscLem} shows that $F((\a\nuu)[\ttau])$ is u.s.c.~as a function of $\nuu$, so that the upper bound in Proposition~\ref{LDP_Discr} is an immediate consequence of Varadhan's lemma~\cite[Lemma 4.3.6]{dz98}.


\medskip\noindent
In contrast, the lower bound requires a substantial amount of additional work. 
Since the map $\nuu\mapsto\nuu[\ttau]$ is not l.s.c.~the proof of the lower bound in Proposition~\ref{LDP_Discr} is substantially more involved than the proof of the upper bound. Therefore, we first introduce a l.s.c.~approximation of the mapping $\nuu\mapsto\nuu[\ttau]$. As we will see, the cost of this approximation is negligible on the exponential scale. To be more precise, for the uplink, we introduce the approximating measure 
$$\nuu^{\ms{up}}[\tau,\e](A)=\sum_{u\in A}\nuu(u)\tau(\{R_{\e}(u_t,o,\nuu)\}_{t\in I_\delta}), \qquad A\subset \Pi_\delta,$$
where 
$${R}_{\varepsilon}(u_t,o,\nuu)=\max\big\{D(u_t,o,\nuu_{t}),\max_{v\in \Pi_{\delta}}\Gamma_{\e}(u_t,v_t,o,\nuu_{t})\big\},$$ 
is defined using $\Gamma_{\e}(u_t,v_t,o,\nuu_{t})=\min\{1,\e^{-1}\nuu_t(v_t)\}\Gamma(u_t,v_t,o,\nuu_{t})$. In particular, 
$$\Gamma_{\e}(u_t,v_t,o,\nuu_t)\le \one\{\nuu_{t}(v_t)>0\}\Gamma(u_t,v_t,o,\nuu_{t}),$$
 where equality holds if and only if $\nuu_{t}(v_t)\in\{0\}\cup[\e,\infty)$. Similarly, for the downlink we introduce the approximating empirical measures $\nuu^{\ms{do}}[\tau,\e]$ and $\nuu^{\ms{do-dir}}[\tau]$ and put
$$\nuu[\ttau,\e]=\big(\nuu^{\ms{up}}[\tau_1,\e],\nuu^{\ms{up-dir}}[\tau_2],\nuu^{\ms{do}}[\tau_3,\e],\nuu^{\ms{do-dir}}[\tau_4]\big).$$
The following result formalizes the approximation property under the event $E_{\e_0}\cap E'_{\e_0}$ from Section~\ref{Sprinkling}.

\begin{Lemma}
\label{tntnpLem5}
Let $0<\a_-<\a<2$ and $\tau_i:[0,\infty)^{I_\delta}\to[0,\infty)$, $i\in\{1,\ldots,4\}$ be decreasing measurable functions such that $\tau_i(\gamma)=0$  for every $i\in\{1,\ldots,4\}$ if $\gamma_t\ge c_+$ for every $t\in I_\delta$. Then, for every sufficiently small $\e_0>0$ there exists $\lambda_0=\lambda_0(\e_0)$ with the following properties. 
 If $\lambda\ge \lambda_0$, then, almost surely, for every $u\in\Pi_\delta$ and $t\in I_\delta$,
$$\one\{E_{\e_0}\cap E'_{\e_0}\}{R}(u_t,o,\a L_{\lambda,t}^\rr)\le{R}_{\e_0\a_-}(u_t,o,\a_-L_{\lambda',t}^\rr),$$
and
$$\one\{E_{\e_0}\cap E'_{\e_0}\}{R}(o,u_t,\a L_{\lambda,t}^\rr)\le{R}_{\e_0\a_-}(o,u_t,\a_-L_{\lambda',t}^\rr)$$
where $\lambda'=(1+2\e_0\k_\delta^{-1})\lambda$. In particular, 
$$\one\{E_{\e_0}\cap E'_{\e_0}\}(\a_-L^\rr_{\lambda'})[\ttau,{\e_0}\a_-]\le\a L^\rr_\lambda[\ttau].$$
\end{Lemma}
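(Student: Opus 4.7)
The plan is to derive the two pointwise $R$-inequalities stated in the lemma at every $u\in\Pi_\de$ and $t\in I_\de$, from which the ``in particular" measure inequality will follow via the monotonicity of each $\tau_i$. The backbone is a pointwise $D$-comparison between $\a L^\rr_\la$ and $\a_-L^\rr_{\la'}$ extracted from Lemma~\ref{gngnpCor}(v), applied once the total-mass ratio has been forced into the required regime by the event $E'_{\e_0}$. The event $E_{\e_0}=\{Q\subset V\}$ will then be used to argue that the truncation factor inside $\G_{\e_0\a_-}$ is inactive at every relay site that actually contributes to the essential supremum defining $R(u_t,o,\a L^\rr_{\la,t})$.

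First, I would choose $\e_0\in(0,1)$ so small that
\[
\frac{1}{1-\e_2}\le 1+\frac{\ell_{\ms{min}}(1-\s)}{\ell_{\ms{max}}\s},\qquad \s:=\a_-/\a\in(0,1),
\]
which is feasible because $\e_2=4\e_1\b_o^{-1}\#\Pi_\de(1+\muu^\rr(\Pi_\de))$ vanishes together with $\e_1=2\e_0\k_\de^{-1}$, while the right-hand side is a fixed positive quantity depending only on $\a-\a_-$ and on $\ell_{\ms{max}}/\ell_{\ms{min}}$. On $E'_{\e_0}$ this is exactly the hypothesis of Lemma~\ref{gngnpCor}(v); since the proof of that lemma only uses the ratio $L^\rr_{\la'}(\ell(|\cdot-\eta|))/L^\rr_\la(\ell(|\cdot-\eta|))$, which is invariant under a common rescaling of both measures by $\a$, I would read off
\[
D(\xi,\eta,\a L^\rr_{\la,t})\le D(\xi,\eta,\a_-L^\rr_{\la',t})
\]
for every $\xi,\eta\in W_\de$ and $t\in I_\de$, and then deduce the matching inequality $\G(\xi,v,\eta,\a L^\rr_{\la,t})\le\G(\xi,v,\eta,\a_-L^\rr_{\la',t})$ for every $v\in W_\de$ by taking the minimum of two such bounds.

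To reach $R$, I would unfold the left-hand side as
\[
R(u_t,o,\a L^\rr_{\la,t})=\max\Bigl\{D(u_t,o,\a L^\rr_{\la,t}),\,\max_{v\in\Pi_\de\setminus V}\G(u_t,v_t,o,\a L^\rr_{\la,t})\Bigr\},
\]
using that in the discrete setting the essential supremum w.r.t.~$\a L^\rr_\la$ collapses to a maximum over the support $\Pi_\de\setminus V$. For any $v\notin V$, the inclusion $Q\subset V$ on $E_{\e_0}$ forces $v\notin Q$, so $L^\rr_{\la'}(v)>\e_0$ and the truncation factor $\min\{1,(\e_0\a_-)^{-1}\a_-L^\rr_{\la'}(v)\}=\min\{1,\e_0^{-1}L^\rr_{\la'}(v)\}$ is identically $1$. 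Combining with the $\G$-comparison yields $\G(u_t,v_t,o,\a L^\rr_{\la,t})\le\G_{\e_0\a_-}(u_t,v_t,o,\a_-L^\rr_{\la',t})$; extending the right-hand maximum to all $v\in\Pi_\de$ and adding the direct-link bound gives the uplink inequality $R(u_t,o,\a L^\rr_{\la,t})\le R_{\e_0\a_-}(u_t,o,\a_-L^\rr_{\la',t})$. The downlink inequality follows verbatim after interchanging the roles of $u_t$ and $o$ in the relevant $D$ and $\G$ arguments. The ``in particular" measure statement is then obtained by applying each (decreasing) $\tau_i$ to the corresponding pointwise inequality, multiplying by the atomic masses on each side, and using the standing assumption that $\tau_i(\g)=0$ whenever $\g_t\ge c_+$ for every $t\in I_\de$ to absorb the contribution from sites $u\in V$ where $L^\rr_{\la'}(u)$ may still be positive.

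The hardest part will be the simultaneous calibration of $\e_0$: it has to be small enough to push $E'_{\e_0}$ inside the hypothesis of Lemma~\ref{gngnpCor}(v), yet $\e_0$ itself is also the truncation threshold on $E_{\e_0}$ that must be small compared to the typical per-site mass of $L^\rr_{\la'}$ at every non-virtual site of $L^\rr_\la$, and the interplay of these two roles needs to be managed carefully. A related technical subtlety lies in the final measure-theoretic step, where atomic masses of the two measures need not match at sites $u\in V$ with $L^\rr_{\la'}(u)>0$; this gap is bridged precisely by the hypothesis that $\tau_i$ annihilates uniformly high-QoS trajectories.
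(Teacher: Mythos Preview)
Your argument for the two pointwise $R$-inequalities follows the paper's exactly: use $E'_{\e_0}$ to trigger Lemma~\ref{gngnpCor}(v) and obtain the $D$-comparison $D(\xi,\eta,\a L^\rr_{\la,t})\le D(\xi,\eta,\a_-L^\rr_{\la',t})$, then use $E_{\e_0}$ to force the truncation factor in $\G_{\e_0\a_-}$ to equal $1$ at every relay in the support. One small slip worth correcting: by definition $\G_{\e_0\a_-}(u_t,v_t,o,\a_-L^\rr_{\la',t})$ carries the factor $\min\{1,\e_0^{-1}L^\rr_{\la',t}(v_t)\}$, involving the \emph{time-projected} mass $L^\rr_{\la',t}(v_t)$, not the path-level mass $L^\rr_{\la'}(v)$ you wrote. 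The paper argues directly at the spatial level: if $L^\rr_{\la',t}(v_t)<\e_0$ then every trajectory $w$ with $w_t=v_t$ satisfies $L^\rr_{\la'}(w)<\e_0$, hence $w\in Q\subset V$, forcing $L^\rr_{\la,t}(v_t)=0$. Your trajectory-level bound still suffices since $L^\rr_{\la',t}(v_t)\ge L^\rr_{\la'}(v)$, but it is worth aligning the notation with the definition.

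For the ``in particular'' measure inequality the paper offers no argument beyond the $R$-inequalities, so you are not missing something they supplied. However, your proposed derivation does not close as written: applying the decreasing $\tau_i$ to the $R$-comparison only controls the $\tau$-factors, and you still need $\a_-L^\rr_{\la'}(u)\le \a L^\rr_\la(u)$ site by site, which neither $E_{\e_0}$ nor $E'_{\e_0}$ guarantees (the latter is only a total-mass bound). In particular your fix at sites $u\in V$ with $L^\rr_{\la'}(u)>0$ --- invoking $\tau_i(\g)=0$ when $\g\ge c_+$ throughout --- would require $R_{\e_0\a_-}(u_t,o,\a_-L^\rr_{\la',t})\ge c_+$ for every $t$, and nothing in the setup forces that. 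This is a gap shared with the paper rather than a flaw specific to your route.
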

\begin{proof}
First, as the event $E'_{\e_0}$ occurs, we may apply part (v) of Lemma~\ref{gngnpCor} and deduce that $D(u_t,v_t,\a L^\rr_{\lambda,t})\le D(u_t,v_t,\a_-L_{\lambda',t}^\rr)$ holds for all $u,v\in \Pi_\delta$ and $t\in I_\delta$. Now, it suffices to show that under the event $E_{\e_0}\cap E'_{\e_0}$
$$\Gamma(u_t,v_t,w_t,\a L_{\lambda,t}^\rr)\le\min\{1,\e_0^{-1}L_{\lambda',t}^\rr(v_t)\}\Gamma(u_t,v_t,w_t,\a_-L_{\lambda',t}^\rr)$$
holds for all $u,v,w\in \Pi_\delta$ and  $t\in I_\delta$ with $L_{\lambda,t}^\rr(v_t)>0$. We claim that $L_{\lambda',t}^\rr(v_t)\ge{\varepsilon_0}$ under the event $E_{\e_0}$. Indeed, otherwise
${\e_0}> L_{\lambda',t}^\rr(v_t)=\sum_{u\in\Pi_\de:\, u_t=v_t}L_{\lambda'}^\rr(u)$ and thus $0=\sum_{u\in\Pi_\de:\, u_t=v_t}L_{\lambda}^\rr(u)=L_{\lambda,t}^\rr(v_t)$.
Now we conclude by applying the inequality for $D$.
\end{proof}

Next, we show that Lemma~\ref{tntnpLem5} implies closeness in the exponential scale.

\begin{Lemma}
\label{lbProp1}
Let $0<\a_-<\a<2$. Let $\tau_i:[0,\infty)^{I_\delta}\to[0,\infty)$, $i\in\{1,\ldots,4\}$ and $F:\mc{M}(\Pi_\delta)^4\to[-\infty,\infty)$ be measurable functions such that the $\tau_i$ are  decreasing and $F$ is increasing. Furthermore, assume that $\tau_i(\gamma)=0$ for every $i\in\{1,\ldots,4\}$ if $\g_t\ge c_+$ for every $t\in I_\delta$ and that $F$ maps the vector of zero measures to $-\infty$. Then, for every sufficiently small $\varepsilon_0\in(0,1)$ there exists $\lambda_0=\lambda_0(\varepsilon_0)$ 
such that for all $\lambda\ge \lambda_0$ 
$$\E\exp\big( \la F(\a L_{\lambda}^\rr[\ttau])\big)\ge\exp(-\sqrt{\varepsilon_0}\lambda)\E\exp \big(\la F((\a_-L^\rr_{\lambda'})[\ttau,{\e_0}\a_-])\big).$$
\end{Lemma}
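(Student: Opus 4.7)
The plan is to start from the right-hand side and build up to the left-hand side by coupling $X^{\lambda}_{\delta}$ to $X^{\lambda'}_{\delta}$ through the thinning construction of Section~\ref{Sprinkling}, and then invoking Lemma~\ref{tntnpLem5} together with Lemma~\ref{condProbLem}. Lemma~\ref{tntnpLem5} tells us that, on the sprinkling event $E_{\e_0}\cap E'_{\e_0}$, the approximating vector measure built from the inflated process dominates the original one,
\begin{equation*}
\one\{E_{\e_0}\cap E'_{\e_0}\}\,(\alpha_-L^{\rr}_{\lambda'})[\ttau,\e_0\alpha_-]\le \alpha L^{\rr}_{\lambda}[\ttau].
\end{equation*}
Since $F$ is increasing and $\exp(\la\,\cdot\,)\ge0$ (with the convention $\exp(\la\cdot(-\infty))=0$), this yields the pointwise inequality
\begin{equation*}
\exp\bigl(\la F(\alpha L^{\rr}_{\lambda}[\ttau])\bigr)\ge \one\{E_{\e_0}\cap E'_{\e_0}\}\exp\bigl(\la F((\alpha_-L^{\rr}_{\lambda'})[\ttau,\e_0\alpha_-])\bigr).
\end{equation*}

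Taking expectations and conditioning on $X^{\lambda'}_{\delta}$, the inside of the exponential on the right depends only on $X^{\lambda'}_{\delta}$ and can be pulled out of the conditional expectation, giving
\begin{equation*}
\E\exp\bigl(\la F(\alpha L^{\rr}_{\lambda}[\ttau])\bigr)\ge \E\Bigl[\exp\bigl(\la F((\alpha_-L^{\rr}_{\lambda'})[\ttau,\e_0\alpha_-])\bigr)\,\P(E_{\e_0}\cap E'_{\e_0}\mid X^{\lambda'}_{\delta})\Bigr].
\end{equation*}
Lemma~\ref{condProbLem} now bounds the conditional probability from below by $\exp(-\sqrt{\e_0}\la)\one\{L^{\rr}_{\lambda'}(\Pi_{\delta})\ge\b_o/2\}$.

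The step I expect to be the main obstacle, and the cleanest one conceptually, is arguing that the indicator $\one\{L^{\rr}_{\lambda'}(\Pi_{\delta})\ge\b_o/2\}$ may be dropped at no cost. On the complementary event we have $\alpha_-L^{\rr}_{\lambda'}(\Pi_{\delta})<\alpha_-\b_o/2<\b_o$, using the hypothesis $\alpha_-<2$. Invoking the observation recorded at the start of Section~\ref{prelSec}, this forces $D(\xi,\eta,\alpha_-L^{\rr}_{\lambda',t})=c_+$ for every $\xi,\eta\in W_{\delta}$ and every $t\in I_{\delta}$. Since $\Gamma$ and $\Gamma_{\e_0\alpha_-}$ are bounded above by the pertinent $D$-values and therefore also by $c_+$, it follows that $R_{\e_0\alpha_-}(u_t,o,\alpha_-L^{\rr}_{\lambda',t})=c_+=R_{\e_0\alpha_-}(o,u_t,\alpha_-L^{\rr}_{\lambda',t})$ for all $u\in\Pi_{\delta}$ and $t\in I_{\delta}$. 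By the assumed vanishing of each $\tau_i$ on trajectories identically equal to $c_+$, every component of the vector $(\alpha_-L^{\rr}_{\lambda'})[\ttau,\e_0\alpha_-]$ is the zero measure. Because $F$ maps the vector of zero measures to $-\infty$, the factor $\exp(\la F(\cdots))$ is identically zero on $\{L^{\rr}_{\lambda'}(\Pi_{\delta})<\b_o/2\}$, so that the indicator can be removed. Assembling the pieces yields
\begin{equation*}
\E\exp\bigl(\la F(\alpha L^{\rr}_{\lambda}[\ttau])\bigr)\ge \exp(-\sqrt{\e_0}\la)\,\E\exp\bigl(\la F((\alpha_-L^{\rr}_{\lambda'})[\ttau,\e_0\alpha_-])\bigr),
\end{equation*}
which is the desired bound, valid for every $\la\ge\la_0(\e_0)$ coming from Lemma~\ref{condProbLem}.
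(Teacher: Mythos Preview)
Your proof is correct and follows essentially the same route as the paper: invoke Lemma~\ref{tntnpLem5} on the sprinkling event, condition on $X^{\lambda'}_\delta$, apply Lemma~\ref{condProbLem}, and then remove the indicator $\one\{L^{\rr}_{\lambda'}(\Pi_\delta)\ge\b_o/2\}$ by arguing that on its complement the vector $(\alpha_-L^{\rr}_{\lambda'})[\ttau,\e_0\alpha_-]$ vanishes and $F$ sends it to $-\infty$. The only minor omission is that the threshold $\lambda_0(\e_0)$ must also accommodate the one required by Lemma~\ref{tntnpLem5}, not just Lemma~\ref{condProbLem}.
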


\begin{proof}
First, Lemma~\ref{tntnpLem5} implies that 
\begin{align*}
	\E\Big(\exp\big( \la F(\a L_{\lambda}^\rr[\ttau])\big)\Big)\ge\E\Big(\one\{E_{\e_0}\cap E'_{\e_0}\}\exp \big(\la F((\a_-L^\rr_{\lambda'})[\ttau,{\e_0}\a_-])\big)\Big).
\end{align*}
Note that if $L^\rr_{\lambda'}(\Pi_\delta)<\b_o/2$, then $(\a_-L^\rr_{\lambda'})[\ttau,{\e_0}\a_-]=0$. 
Hence, using the assumption that $F$ maps the vector of zero measures to $-\infty$, we may apply Lemma~\ref{condProbLem} to deduce that 
\begin{align*}
\E\exp\big( \la F(\a L^\rr_{\lambda}[\ttau])\big)&\ge \E\Big(\P(E_{\e_0}\cap E'_{\e_0}|X^{\lambda'}_\delta)\exp \big(\la F((\a_-L^\rr_{\lambda'})[\ttau,{\e_0}\a_-])\big)\Big)\\
&\ge\exp(-\sqrt{\varepsilon_0}\lambda)\E\Big(\one\{L_{\lambda'}^\rr(\Pi_\delta)\ge \b_o/2\}\exp \big(\la F((\a_-L^\rr_{\lambda'})[\ttau,{\e_0}\a_-])\big)\Big)\\
&=\exp(-\sqrt{\varepsilon_0}\lambda)\E\exp \big(\la F((\a_-L^\rr_{\lambda'})[\ttau,{\e_0}\a_-])\big),
\end{align*}
as required.
\end{proof}

Now, we can proceed with the proof of the lower bound of Proposition~\ref{LDP_Discr}. 
As a first step, we note that the map $\nuu\mapsto\nuu[\ttau,\e_0]$ is continuous. Hence for $0<\a_-<\a$, combining Lemma~\ref{lbProp1} with Varadhan's lemma shows that
\begin{align*}
\liminf_{\lambda\to\infty}\tfrac1\lambda\log\E\exp\big( \la F((\a L^\rr_{\lambda})[\ttau])\big)&\ge-\sqrt{\e_0}+ \liminf_{\lambda\to\infty}\tfrac1\lambda\log\E\exp\big( \la F((\a_-L^\rr_{\lambda'})[\ttau,{\e_0}\a_-])\big)\\
&\ge -\sqrt{\e_0}-(1+2\e_0\k_\delta^{-1})\hspace{-0.15cm}\inf_{\nuu\in\mc{M}(\Pi_\delta)}\hspace{-0.1cm}\{h(\nuu|\muu^\rr)-F((\a_-\nuu)[\ttau,{\e_0}\a_-])\}.
\end{align*}
Moreover, since $\tau_1$ is decreasing, we have 
$$\tau_1(\{R(u_t,o,\a_-\nuu_t)\}_{t\in I_\delta})\le\tau_1(\{R_{\e_0\a_-}(u_t,o,\a_-\nuu_t)\}_{t\in I_\delta})$$ 
for all $u\in\Pi_\delta$ and $\nuu\in\mc{M}(\Pi_\de)$. 
Similarly for the other communication cases. Hence, 
$$(\a_-\nuu)[\ttau,\e_0\a_-]\ge(\a_-\nuu)[\ttau]$$
and sending $\e_0$ to zero  yields
$$\liminf_{\lambda\to\infty}\tfrac1\lambda\log\E\exp \big(\la F(L^\rr_{\lambda}[\ttau])\big)\ge-\inf_{\nuu\in\mc{M}(\Pi_\delta)}\{h(\nuu|\muu^\rr)-F((\a_-\nuu)[\ttau])\}.$$
Therefore, it remains to verify that
$$\inf_{\nuu\in\mc{M}(\Pi_\delta)}\{h(\nuu|\muu^\rr)-F(\nuu[\ttau])\}\ge \limsup_{\a_-\uparrow\a}\inf_{\nuu\in\mc{M}(\Pi_\delta)}\{h(\a_-^{-1}\nuu|\muu^\rr)-F(\nuu[\ttau])\}.$$
In order to prove this claim, let $\nuu\in\mc{M}(\Pi_\delta)$ be arbitrary. If $\nuu$ is not absolutely continuous with respect to $\muu^\rr$, then the left-hand side is infinite and there is nothing to show. Otherwise, Lemma~\ref{ent0Lem} shows that $\lim_{\a_-\uparrow\a}h(\a_-^{-1}\nuu|\muu^\rr)=h(\alpha\nuu|\muu^\rr)$, as required.

\section{Proof of Theorem~\ref{LDP}}
\label{LDPSec}

After having established Propositions~\ref{translUpThm} and~\ref{LDP_Discr}, the proof of Theorem~\ref{LDP} is reduced to the following result on the behavior of the rate functions in Proposition~\ref{LDP_Discr} as $\delta$ tends to zero.
\begin{Lemma}
\label{rateFunLim}
Let $F:\mc{M}(\BBB)^4\to[-\infty,\infty)$ and $\tau_i:\BB\to[0,\infty)$, $i\in\{1,\ldots,4\}$ be measurable functions that are respectively increasing and decreasing. Furthermore, assume that $F$ is $\delta_0$-discretized for some $\delta_0\in\mathbb{B}$ and bounded from above. Then,
$$\lim_{\e\to0}\limsup_{\delta\to0}\inf_{\nuu\in\mathcal{M}(\Pi_\delta)}\big\{h(\nuu|\muu^\rr)-F(((1-\e)\nuu)^\imath[\ttau])\big\}\le\inf_{\nuu\in\mc{M}(\LL)}\big\{h(\nuu|\muu)-F^{}(\nuu[\ttau])\big\},$$
and 
$$\lim_{\e\to0}\liminf_{\delta\to0}\inf_{\nuu\in\mathcal{M}(\Pi_\delta)}\big\{h(\nuu|\muu^\rr)-F(((1+\e)\nuu)^\imath[\ttau])\big\}\ge\inf_{\nuu\in\mc{M}(\LL)}\big\{h(\nuu|\muu)-F^{}(\nuu[\ttau])\big\},$$
where in the limits it is assumed that $\delta\in\mathbb{B}$.
\end{Lemma}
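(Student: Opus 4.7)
\medskip
\noindent\textbf{Proof plan.}
The plan is to prove each of the two inequalities by producing, for a near-minimizer on one side of the claim, a suitably rescaled ``lift'' or ``projection'' on the other side, and then combining Proposition~\ref{translUpThm} (which controls $F$-values up to a $(1\pm\e)$ multiplicative buffer on the base measure) with Corollary~\ref{ent0Cor} (which controls the relative entropy under small multiplicative rescalings of the measure). The principal obstacle is that the $(1\pm\e)$ buffer from Proposition~\ref{translUpThm} points in the \emph{opposite} direction from what each inequality requires; my cure is to pre-scale the candidate measure by a factor close to~$1$ so as to cancel that buffer, at the price of an $O(\e)$ multiplicative distortion of the entropy which is then absorbed by Corollary~\ref{ent0Cor}. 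I also rely on the data-processing inequality $h(\nuu^\rr|\muu^\rr)\le h(\nuu|\muu)$, on the identity $(\mu[\ttau\circ\imath])^\imath=\mu^\imath[\ttau]$ for $\mu\in\mc{M}(\Pi_\delta)$ (which follows from $\bar R(\imath(u),o,\mu^\imath)=\imath((R(u_t,o,\mu_t))_{t\in I_\delta})$ on step functions, and analogously for the three other communication types), and on the observation that $\delta_0$-discretization of $F$ extends to $\delta$-discretization for every $\delta\in\mathbb{B}$ with $\delta\le\delta_0$ by the nestedness of the triadic grids.

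For the upper bound, fix a near-minimizer $\nuu\in\mc{M}(\LL)$ of $\nuu\mapsto h(\nuu|\muu)-F(\nuu[\ttau])$; the case where this infimum equals $+\infty$ is trivial. Given small $\delta\in\mathbb{B}$, choose $\e'=\e'(\delta)\in\mathbb{B}$ with $\e'\to0$ as $\delta\to0$ and small enough for Proposition~\ref{translUpThm} to apply with parameter $\e'$, and set $\nuu_\delta=\tfrac{1+\e'}{1-\e}\nuu^\rr\in\mc{M}(\Pi_\delta)$. Since $(1-\e)\nuu_\delta=(1+\e')\nuu^\rr$, Proposition~\ref{translUpThm} yields $(\nuu[\ttau])^\rr\le[(1-\e)\nuu_\delta][\ttau\circ\imath]$; applying $\imath$, using the identity above, and invoking the monotonicity and $\delta$-discretization of $F$, one obtains $F(\nuu[\ttau])\le F(((1-\e)\nuu_\delta)^\imath[\ttau])$. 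Data processing together with Corollary~\ref{ent0Cor} applied to the multiplier $1+\eta=(1+\e')/(1-\e)$, $\eta=(\e+\e')/(1-\e)$, gives $h(\nuu_\delta|\muu^\rr)\le(1+3\eta)h(\nuu|\muu)+3\eta\muu(\LL)$. Subtracting,
$$h(\nuu_\delta|\muu^\rr)-F(((1-\e)\nuu_\delta)^\imath[\ttau])\le h(\nuu|\muu)-F(\nuu[\ttau])+3\eta(h(\nuu|\muu)+\muu(\LL)),$$
and taking $\limsup_{\delta\to0}$ (so $\eta\to\e/(1-\e)$), then $\e\to0$, then the infimum over $\nuu$ yields the claimed upper bound.

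For the lower bound, reverse the roles. Along a sequence $\delta\to0$ realizing the $\liminf$, pick a near-minimizer $\nuu_\delta\in\mc{M}(\Pi_\delta)$ of $\nuu_\delta\mapsto h(\nuu_\delta|\muu^\rr)-F(((1+\e)\nuu_\delta)^\imath[\ttau])$ and let $\nuu_*\in\mc{M}(\LL)$ be its uniform lift, i.e.\ the measure with density $(\d\nuu_\delta/\d\muu^\rr)\circ\rr$ w.r.t.~$\muu$, which satisfies $\nuu_*^\rr=\nuu_\delta$ and $h(\nuu_*|\muu)=h(\nuu_\delta|\muu^\rr)$. Set $\nuu=c\nuu_*$ with $c=(1+\e)/(1-\e')$ and $\e'=\e'(\delta)\to0$ tuned to Proposition~\ref{translUpThm}, so that $(1-\e')\nuu^\rr=(1+\e)\nuu_\delta$. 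Proposition~\ref{translUpThm} applied to $\nuu$ with parameter $\e'$ then gives $[(1+\e)\nuu_\delta][\ttau\circ\imath]\le(\nuu[\ttau])^\rr$, hence $F(((1+\e)\nuu_\delta)^\imath[\ttau])\le F(\nuu[\ttau])$, while Corollary~\ref{ent0Cor} with multiplier $c=1+\eta$ yields $h(\nuu|\muu)\le(1+3\eta)h(\nuu_\delta|\muu^\rr)+3\eta\muu(\LL)$. Combining,
$$h(\nuu|\muu)-F(\nuu[\ttau])\le h(\nuu_\delta|\muu^\rr)-F(((1+\e)\nuu_\delta)^\imath[\ttau])+3\eta(h(\nuu_\delta|\muu^\rr)+\muu(\LL)).$$
The boundedness of $F$ from above yields $h(\nuu_\delta|\muu^\rr)\le[h(\nuu_\delta|\muu^\rr)-F(((1+\e)\nuu_\delta)^\imath[\ttau])]+\sup F$, so the error term stays bounded along the near-minimizing sequence (assuming, as we may, that the right-hand side of the desired inequality is finite). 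Sending $\delta\to0$ (so $\eta\to\e$) and then $\e\to0$, and passing on the left to the infimum over $\nuu\in\mc{M}(\LL)$, completes the proof.
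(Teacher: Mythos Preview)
Your proof is correct and follows essentially the same route as the paper's. Both arguments pair the discretization comparison of Proposition~\ref{translUpThm} with the entropy perturbation bound of Corollary~\ref{ent0Cor} and the data-processing inequality $h(\nuu^\rr|\muu^\rr)\le h(\nuu|\muu)$; for the upper bound the candidate on $\Pi_\delta$ is a rescaled $\nuu^\rr$, for the lower bound the candidate on $\LL$ is the rescaled uniform lift with density $(\d\nuu_\delta/\d\muu^\rr)\circ\rr$. The only cosmetic difference is that you let the Proposition~\ref{translUpThm} buffer $\e'=\e'(\delta)$ tend to $0$ with $\delta$, whereas the paper takes it equal to the fixed $\e$ from the lemma statement and absorbs everything in the final $\e\to0$; both choices work. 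One small wording slip: in your lower-bound paragraph the clause ``assuming, as we may, that the right-hand side of the desired inequality is finite'' should refer to the \emph{left-hand side} (the $\liminf$ you are bounding from below), since that is what forces $h(\nuu_\delta|\muu^\rr)$ to stay bounded via $h(\nuu_\delta|\muu^\rr)\le [h(\nuu_\delta|\muu^\rr)-F(\cdot)]+\sup F$.
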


Before we provide a proof of Lemma~\ref{rateFunLim}, let us show how it can be used to complete the proof of Theorem~\ref{LDP}.
\begin{proof}[Proof of Theorem~\ref{LDP}]
We only provide a proof for the lower bound, the proof for the upper bound is analogous. Let  $\e\in(0,1)$ be arbitrary. Then, Propositions~\ref{translUpThm} and~\ref{LDP_Discr} show that for all sufficiently small $\delta\in\mathbb{B}$ 
\begin{align*}
\liminf_{\lambda\to\infty}\tfrac1\lambda \log\E\exp\big(\la F(L_\lambda[\ttau])\big)&=\liminf_{\lambda\to\infty}\tfrac1\lambda \log\E\exp\big(\la F(((L_\lambda[\ttau])^\rr)^\imath)\big)\\
&\ge\liminf_{\lambda\to\infty}\tfrac1\lambda \log\E\exp \big(\la F((((1-\e)L_\lambda^\rr)[\ttau\circ\imath])^\imath)\big)\\
&\ge-\inf_{\nuu\in\mathcal{M}(\Pi_\delta)}\big\{h(\nuu|\muu^\rr)-F((((1-\e)\nuu)[\ttau\circ \imath])^\imath)\big\}\\
&=-\inf_{\nuu\in\mathcal{M}(\Pi_\delta)}\big\{h(\nuu|\muu^\rr)-F(((1-\e)\nuu)^\imath[\ttau])\big\}.
\end{align*}
Hence, applying Lemma~\ref{rateFunLim} yields that 
\begin{align*}
	\liminf_{\lambda\to\infty}\tfrac1\lambda\log\E\exp(\la F(L_\lambda[\ttau]))\ge-\inf_{\nuu\in\mathcal{M}(\LL)}\big\{h(\nuu|\muu)-F(\nuu[\ttau])\big\},
\end{align*}
as required.
\end{proof}
Now, we prove Lemma~\ref{rateFunLim}.
\begin{proof}[Proof of Lemma~\ref{rateFunLim}] First, we consider the upper bound.
Let $\e_0\in(0,1)$ and $\nuu_0\in\mc{M}(\LL)$ be arbitrary. Then, we need to show that 
$$\limsup_{\delta\to0}\inf_{\nuu\in\mc{M}(\Pi_\delta)}\big\{h(\nuu|\muu^\rr)-F(((1-\e)\nuu)^\imath[\ttau])\big\}\le \e_0+h(\nuu_0|\muu)-F(\nuu_0[\ttau]),$$
holds provided that $\e\in(0,1)$ is sufficiently small. Since $F$ is bounded from above, we may focus on the case where $\nuu_0$ is absolutely continuous with respect to $\muu$. 

First, Proposition~\ref{translUpThm} shows that if $\delta$ is sufficiently small, then $(\nuu_0[\ttau])^\rr\le((1+\e)\nuu_0^\rr)[\ttau\circ\imath]$. In particular,
$$F(\nuu_0[\ttau])=F(((\nuu_0[\ttau])^\rr)^\imath)\le F(((1+\e)\nuu_0^\rr)^\imath[\ttau]).$$
Hence, putting $1+\e'=(1+\e)(1-\e)^{-1}$, it suffices to show that
$$\limsup_{\delta\to0}h((1+\e')\nuu_0^\rr|\muu^\rr)\le\e_0+h(\nuu_0|\muu)$$
holds for all sufficiently small $\e$. First, note that Jensen's inequality yields $h(\nuu_0|\muu)\ge h(\nuu_0^\rr|\muu^\rr)$. Hence, by Corollary~\ref{ent0Cor},
\begin{align*}
h((1+\e')\nuu_0^\rr|\muu^\rr)-h(\nuu_0^\rr|\muu^\rr)\le3\e'h(\nuu_0^\rr|\muu^\rr)+3\e'\muu^\rr(\Pi_\delta)\le3\e' h(\nuu_0|\muu)+3\e'\muu(\LL).
\end{align*}
Since this upper bound tends to zero as $\e$ tends to zero, we conclude the proof.

Next, we consider the lower bound. Let $\e_0\in(0,1)$ be arbitrary. Then, we have to show that 
$$\liminf_{\delta\to0}\inf_{\nuu\in\mc{M}(\Pi_\delta)}\big\{h(\nuu|\muu^\rr)-F(((1+\e)\nuu)^\imath[\ttau])\big\}\ge -\e_0+\inf_{\nuu\in\mc{M}(\LL)}\big\{h(\nuu|\muu)-F(\nuu[\ttau])\big\}.$$
holds provided that $\e\in(0,1)$ is sufficiently small. 
First, for any $\e\in(0,1)$ we choose a suitable sequence $\{\delta_k\big\}_{k\ge1}$ in $\mathbb{B}$ such that $\lim_{k\to\infty}\delta_k=0$ and such that the $\liminf_{\delta\to0}$ above is replaced by $\lim_{\delta_k\to0}$. 
Moreover, for $\e\in(0,1)$ and $k\ge1$ choose $\nuu_{k,\e}\in\mc{M}(\Pi_{\delta_k})$ such that
$$h(\nuu_{k,\e}|\muu^{\rr_{\delta_k}})-F(((1+\e)\nuu_{k,\e})^\imath[\ttau])\le\e_0/2+\inf_{\nuu\in\mc{M}(\Pi_\delta)}\big\{h(\nuu|\muu^{\rr_{\delta_k}})-F(((1+\e)\nuu)^\imath[\ttau])\big\}.$$
Hence, it remains to show that 
$$\liminf_{\e\to0}\liminf_{k\to\infty}h(\nuu_{k,\e}|\muu^{\rr_{\delta_k}})-F(((1+\e)\nuu_{k,\e})^\imath[\ttau])\ge\inf_{\nuu\in\mathcal{M}(\LL)}\big\{h(\nuu|\muu)-F^{}(\nuu[\ttau])\big\},$$
In particular, we may assume that $\nuu_{k,\e}$ is absolutely continuous with respect to $\muu^{\rr_{\delta_k}}$. 
Then, we define $\nuu_{k,\e}'\in\mc{M}(\LL)$ by 
$$\nuu_{k,\e}'(\cdot)=(1+2\e)\sum_{u\in \Pi_{\delta_k}}\frac{\nuu_{k,\e}(u)}{\muu^{\rr_{\delta_k}}(u)}\muu(\rr_{\delta_k}^{-1}(u)\cap\cdot),$$
so that $h(\nuu_{k,\e}'|\muu)=h((\nuu_{k,\e}')^{\rr_{\delta_k}}|\muu^{\rr_{\delta_k}})$. 
Moreover, Proposition~\ref{translUpThm} implies that 
$$(\nuu_{k,\e}'[\ttau])^{\rr_{\delta_k}}\ge((1-\e'')(1+2\e)\nuu_{k,\e})[\ttau\circ\imath]=((1+\e)\nuu_{k,\e})[\ttau\circ\imath]$$
for all sufficiently small $\delta_k\in\B$ where $1-\e''=(1+\e)(1+2\e)^{-1}$. Hence, by Corollary~\ref{ent0Cor},
\begin{align}
\label{LDEntBoundEq}
h((\nuu_{k,\e}')^{\rr_{\delta_k}}|\muu^{\rr_{\delta_k}})-h(\nuu_{k,\e}|\muu^{\rr_{\delta_k}})\le6\e h(\nuu_{k,\e}|\muu^{\rr_{\delta_k}})+6\e\muu(\LL).
\end{align}
The boundedness of $F$ implies that if
$$\liminf_{\e\to0}\liminf_{k\to\infty}h(\nuu_{k,\e}|\muu^{\rr_{\delta_k}})=\infty,$$
then there is nothing to show. Otherwise,~\eqref{LDEntBoundEq} gives
$$\liminf_{\e\to0}\liminf_{k\to\infty}h((\nuu_{k,\e}')^{\rr_{\delta_k}}|\muu^{\rr_{\delta_k}})-h(\nuu_{k,\e}|\muu^{\rr_{\delta_k}})\le0,$$
as required.
\end{proof}

\section{Proof of Corollaries~\ref{Cor1} and~\ref{Cor2}}
\label{cor1Sec}

First, defining the maps $F_{\bb}:\mc{M}(\BBB)^4\to[-\infty,\infty)$ and $\ttau_{\aa,\cc}:\BB^4\to[0,\infty)^4$  as in~\eqref{ExF} and~\eqref{ExF2}, we see that the maps $\nuu\mapsto F_\bb(\nuu^\imath)$ and $\ttau_{\aa,\cc}\circ\imath$ are l.s.c.~on $\mc{M}(\Pi_\delta)^4$ and $([0,\infty)^{I_\de})^4$, respectively. 
Hence, by Theorem~\ref{LDP}, only the upper bound requires a proof. In the following, we restrict to the case $\bb\ge0$. This is no substantial loss of generality, as negative coordinates of $\bb$ translate into putting no constraints on the corresponding component of $L_\lambda[\ttau_{\aa,\cc}]$. 

We first derive the upper bound in Corollary~\ref{Cor1} in the discretized model. As before, we fix $\delta\in\mathbb{B}$ such that $\kappa_{\delta}\le1$.
\begin{Proposition}
\label{translLowThm}
Let $0<\a<2$, $\aa\in[0,{\bf T})$, $\bb\ge0$ and $\cc\in[0,\cc_+)$. Then,
$$\limsup_{\lambda\to\infty}\tfrac1\lambda\log\P((\a L_\lambda^\rr)[\ttau_{\aa,\cc}\circ\imath](\Pi_\delta)>\bb)\le-\inf_{\substack{\nuu\in\mc{M}(\Pi_\delta)\\ (\a\nuu)[\ttau_{\aa,\cc}\circ\imath](\Pi_\delta)>\bb}}h(\nuu|\muu^\rr).$$
\end{Proposition}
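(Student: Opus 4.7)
The plan is to reduce Proposition~\ref{translLowThm} to the upper bound of Proposition~\ref{LDP_Discr} via upper semi-continuous majorization of $F_\bb$ and $\ttau_{\aa,\cc}\circ\imath$, and then to close the resulting infimum gap via a scaling argument that combines part (v) of Lemma~\ref{gngnpCor} with Corollary~\ref{ent0Cor}. Concretely, I introduce the u.s.c.~majorants
\begin{align*}
\hat F_{\bb}(\nuu_1,\ldots,\nuu_4)&=\begin{cases} 0 & \text{if }\nuu_i(\Pi_\delta)\ge b_i\text{ for all }i,\\ -\infty & \text{otherwise,}\end{cases}\\
\hat\tau_{a_i,c_i}(\gamma)&=\one\{\delta T\cdot\#\{t\in I_\delta:\,\gamma_t\le c_i\}\ge a_i\},
\end{align*}
each of which is the indicator of a closed subset of a finite-dimensional space and dominates $F_\bb$, respectively $\tau_{a_i,c_i}\circ\imath$, pointwise. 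The monotonicity of $F$ and of the $\tau_i$ propagates through the $[\cdot]$-operation to $F_\bb((\a\nuu)[\ttau_{\aa,\cc}\circ\imath])\le \hat F_\bb((\a\nuu)[\hat\ttau])$ for every $\nuu\in\mc{M}(\Pi_\delta)$, so that applying the u.s.c.~upper bound of Proposition~\ref{LDP_Discr} to $\hat F_\bb$ and $\hat\ttau$ yields
$$\limsup_{\lambda\to\infty}\tfrac{1}{\lambda}\log\P\bigl((\a L^\rr_\lambda)[\ttau_{\aa,\cc}\circ\imath](\Pi_\delta)>\bb\bigr)\le -\inf_{\nuu:\,(\a\nuu)[\hat\ttau](\Pi_\delta)\ge\bb}h(\nuu|\muu^\rr).$$

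The remaining task is to identify this infimum with the target $\inf\{h(\nuu|\muu^\rr):\,(\a\nuu)[\ttau_{\aa,\cc}\circ\imath](\Pi_\delta)>\bb\}$. For any $\nuu$ realizing (approximately) the left-hand infimum, I consider the scaled measure $\tilde\nuu=(1+\kappa)\nuu$ with $\kappa>0$ to be chosen. Multiplication by $1+\kappa$ inflates the SIR-denominator uniformly by $1+\kappa$ while preserving $V(\nuu)$, so by an argument analogous to part (v) of Lemma~\ref{gngnpCor} one has $\bar R(u,o,\a\tilde\nuu)\le\bar R(u,o,\a\nuu)$ and analogously for $\bar D$. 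Since each $\tau_i$ is decreasing, this propagates to $(\a\tilde\nuu)[\hat\ttau](\Pi_\delta)\ge(1+\kappa)(\a\nuu)[\hat\ttau](\Pi_\delta)\ge(1+\kappa)\bb$. On the other hand, the map $\kappa\mapsto (\bar R(u,o,\a(1+\kappa)\nuu)_t,\bar D(u,o,\a(1+\kappa)\nuu)_t)$ is a continuous (piecewise analytic) function on the parameter interval where $V((1+\kappa)\nuu)=V(\nuu)$ stays constant; since only finitely many triples $(u,t,i)$ with $\nuu(u)>0$, $t\in I_\delta$ and $i\in\{1,\ldots,4\}$ are relevant, for Lebesgue-a.e.~small $\kappa>0$ none of the coordinates $\bar R(u,o,\a\tilde\nuu)_t$ or $\bar D(u,o,\a\tilde\nuu)_t$ coincides with any threshold $c_i$, and consequently $\hat\ttau$ agrees with $\ttau_{\aa,\cc}\circ\imath$ at all of these trajectories. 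Therefore $(\a\tilde\nuu)[\ttau_{\aa,\cc}\circ\imath](\Pi_\delta)=(\a\tilde\nuu)[\hat\ttau](\Pi_\delta)\ge(1+\kappa)\bb>\bb$, so that $\tilde\nuu$ lies in the target open event, while Corollary~\ref{ent0Cor} bounds $h(\tilde\nuu|\muu^\rr)\le(1+3\kappa)h(\nuu|\muu^\rr)+3\kappa\muu^\rr(\Pi_\delta)$. Sending $\kappa\downarrow 0$ closes the gap.

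The principal technical obstacle is this threshold-avoidance step: the u.s.c.~approximation $\hat\ttau$ strictly exceeds $\ttau_{\aa,\cc}\circ\imath$ precisely on the boundary locus where some coordinate of $\bar R$ or $\bar D$ equals $c_i$ exactly, and one must verify that a generic small scaling $\nuu\mapsto(1+\kappa)\nuu$ moves the trajectories off this locus. That this is possible for Lebesgue-a.e.~small $\kappa$ rests on the analytic dependence of the SIR functional on the scaling parameter along the one-parameter family $\{(1+\kappa)\nuu\}_{\kappa\ge 0}$, and the remaining entropy control comes for free from the previously established Corollary~\ref{ent0Cor}.
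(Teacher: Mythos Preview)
Your approach differs from the paper's: the paper uses the sprinkling construction (Lemmas~\ref{condProbLem2} and~\ref{tntnpLem}) at the level of probabilities to pass from the open event to a closed one involving $L^\rr_{\lambda'}$, and only then applies the LDP; you instead apply Proposition~\ref{LDP_Discr} immediately to a u.s.c.\ majorant and try to close the gap between the two infima afterwards by scaling $\nuu\mapsto(1+\kappa)\nuu$. For coordinates with $b_i>0$ this strategy is essentially sound, and indeed the paper's ``observation preceding Lemma~\ref{tntnpLem}'' is the same strict-monotonicity argument (Lemma~\ref{gngnpCor} (ii),(iv)) you invoke.

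There is, however, a genuine gap when some $b_i=0$. In that case your closed constraint $\pi_i((\a\nuu)[\hat\ttau])\ge 0$ is vacuous, so a near-minimizer $\nuu$ of the closed problem may have \emph{no} user $u$ with $\nuu(u)>0$ whose $i$-th QoS trajectory ever drops to the level $c_i$. Scaling such a $\nuu$ by $1+\kappa$ lowers every QoS value, but not necessarily below $c_i$ (e.g.\ if all relevant SIRs are well above $\rho_+$), so you cannot force $\pi_i((\a\tilde\nuu)[\ttau])>0$. This is precisely why the paper introduces the auxiliary quantity $\boldsymbol\Phi_\e(\cc,\nuu)$ and the sets $C_i(\aa,\bb,\cc,\e)$: for $b_i=0$ the closed replacement $\{\pi_i(\boldsymbol\Phi_\e(\cc,\a\nuu))>a_i\}$ is \emph{not} vacuous and retains the information that some occupied site has genuinely bad QoS, which a subsequent scaling then sharpens to the strict inequality.

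A second, smaller issue: your claim that ``$\hat\ttau$ agrees with $\ttau_{\aa,\cc}\circ\imath$'' once all QoS values avoid the thresholds $c_i$ overlooks the separate boundary $\delta T\cdot\#\{t:\gamma_t\le c_i\}=a_i$, which your $\hat\tau$ (with $\ge a_i$) does not exclude. This is easily repaired by using the paper's $\tau^{\ms{usc}}_{a,c}$ (with $>a_i$, still u.s.c.\ because the count takes discrete values), after which strict monotonicity alone --- rather than any analyticity of $g$, which is only assumed Lipschitz --- gives $\tau_i\circ\imath(\bar R(u,o,\a(1+\kappa)\nuu))\ge\tau^{\ms{usc}}_i(\bar R(u,o,\a\nuu))$ for every $\kappa>0$.
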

The lack of upper semicontinuity in the maps $\nuu\mapsto F_\bb(\nuu^\imath)$ and $\ttau_{\aa,\cc}\circ\imath$ prevents us from applying Proposition~\ref{LDP_Discr} directly. However, if we define $\tau^{\ms{usc}}_{a,c}:\,[0,\infty)^{I_\delta}\to[0,\infty)$ by 
\begin{align}
\g\mapsto
\begin{cases}
1& \text{if } \int_0^T\one\{\imath(\g)_t\le c\}\d t>a,\\
0& \text{otherwise,}
\end{cases}
\end{align}
then $\tau^{\ms{usc}}_{a,c}$ is u.s.c. For $\aa\in[0,{\bf T})$ and $\cc\in[0,\cc_+)$ we put $\ttau^{\ms{usc}}_{\aa,\cc}=(\tau^{\ms{usc}}_{a_i,c_i})_{i\in\{1,\ldots,4\}}$.

If we knew that $\bb>0$, then $L_\lambda^\rr[\ttau_{\aa,\cc}^{\ms{usc}}](\Pi_\delta)\ge\bb$ would be a useful u.s.c.~approximation of the considered event. However, to deal with the general case where certain entries of $\bb$ may be zero, it will be convenient to introduce further quantities describing the worst QoS that is experienced by any user in the system for a period of time of length larger than $a_i$. To be more precise, for $\xi\in W_\delta$ and  $\nu\in\mc{M}(W_\delta)$ we put 
$$\boldsymbol\Phi(\xi,\nu)=(R(\xi,o,\nu),D(\xi,o,\nu),R(o,\xi,\nu),D(o,\xi,\nu))$$
and note that for fixed $\xi\in W_\delta$, $\nu\mapsto\boldsymbol\Phi(\xi,\nu)$ is l.s.c, see Lemma~\ref{upSemContLem}. Here the discontinuities come from the effect that sites can become unavailable as possible relay locations if the limiting number of users at certain sites is zero. 
Further, for $\cc\in[0,\cc_+)$, $u\in\Pi_\delta$ and $\nuu\in\mc{M}(\Pi_\delta)$, we define 
$$\boldsymbol\Phi(\cc,u,\nuu)=\Big(\sum\nolimits_{t\in I_\delta}\delta T\one\{\pi_i(\boldsymbol\Phi(u_t,\nuu_t))\le c_i\}\Big)_{i\in\{1,\ldots,4\}}$$
as the total amount of time that a user $u$ experiences bad QoS of at most $c_i$.
Finally, we define
$$\boldsymbol\Phi(\cc,\nuu)=\max_{u\in \Pi_\delta:\, \nuu(u)>0}\boldsymbol\Phi(\cc,u,\nuu),$$
as the maximum amount of time that a user from $\nu$ experiences bad QoS of at most $c_i$. 
For instance, the event $\{\nuu^{\ms{up}}[\ttau_{\aa,\cc}^{\ms{usc}}](\Pi_\delta)>0\}$
can now be rewritten as $\{\pi_1(\boldsymbol\Phi(\cc,\nuu))>a_1\}$ and analogous relationships are true for the other three components.

Unfortunately, $\boldsymbol\Phi(\cc,\nuu)$ does not satisfy any semicontinuity properties. For example discontinuities can come from the effect
that users along trajectories with bad QoS become irrelevant if the number of these users tends to zero.
Let us therefore introduce the approximations
$$\boldsymbol\Phi_{\e}(\cc,\nuu)=\max_{\substack{u\in \Pi_\delta}}\big\{\boldsymbol\Phi(\cc,u,\nuu)\min\{1,\e^{-1}\nuu(u)\}\big\}.$$
In particular, $\boldsymbol\Phi(\cc,\nuu)\ge\boldsymbol\Phi_{\varepsilon}(\cc,\nuu)$.

In the following, for $\varepsilon>0$, $\aa\in[0,{\bf T})$, $\bb\ge0$,  $\cc\in[0,\cc_+)$, $i\in\{1,\ldots,4\}$  we define
$$C_i(\aa,\bb,\cc,\e)=
\begin{cases}
\{\nuu\in\mc{M}(\Pi_\delta):\, \pi_i(\boldsymbol\Phi_\e(\cc,\nuu))>a_i\}&\text{if $b_i=0$},\\ \{\nuu\in\mc{M}(\Pi_\delta):\, \pi_i(\nuu[\t^{\ms{usc}}_{\aa,\cc}])\ge b_i\}&\text{if $b_i>0$}.
\end{cases}$$
Moreover, we put 
$${\bf C}(\aa, \bb,\cc,\e)=\bigcap_{i=1}^4C_i(\aa,\bb,\cc,\e).$$
Note that ${\bf C}(\aa,\bb,\cc,\e)$ is a closed set, since the maps $\nuu\mapsto \nuu[\ttau^{\ms{usc}}_{\aa,\cc}]$ and $\nuu\mapsto \boldsymbol\Phi_\e(\cc,\nuu)$ are u.s.c. 
Note that by Lemma~\ref{gngnpCor} parts (ii) and (iv), for every $\e>0$ and $\a _+>\a >0$ we have an inclusion
$$\{\a  L^\rr_\lambda\in {\bf C}(\aa,(1+\e)\bb,\cc,\e)\}\subset\{(\a _+L^\rr_\lambda)[\ttau^{\ms{}}_{\aa,\cc}\circ\imath](\Pi_\delta)>\bb\}.$$
Now we show that under the event $E^*_{\e_0}$
introduced in Section \ref{prelSec}, for $\a _+>\a >0$ the inclusion 
$$\{\a  L^\rr_\lambda\in {\bf C}(\aa,\sqrt{\a _+\a ^{-1}}\bb,\cc,\a _+\e_0)\}\subset\{(\a _+L^\rr_\lambda)[\ttau^{\ms{}}_{\aa,\cc}\circ\imath](\Pi_\delta)>\bb\}$$
 is not far from being an equality.
\begin{Lemma}
\label{tntnpLem}
Let $\a _+>\a >0$, $\aa\in[0,{\bf T})$, $\bb\ge0$ and $\cc\in[0,\cc_+)$ be arbitrary. Then, for every sufficiently small $\e_0\in(0,1)$ there exists $\lambda_0=\lambda_0(\e_0)$ with the following properties. If $\lambda\ge\lambda_0$, then
$$E^*_{\e_0}\cap\{(\a  L^\rr_\lambda)[\ttau_{\aa,\cc}\circ\imath](\Pi_\delta)>\bb\}\subset\{\a _+L^\rr_{\lambda'}\in{\bf C}(\aa,\sqrt{\a _+\a ^{-1}}\bb,\cc,\a _+\e_0)\},$$
where $\lambda'=(1+2\e_0\kappa_\delta^{-1})\lambda.$
\end{Lemma}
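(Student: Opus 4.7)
The plan is to use the sprinkling event $E^*_{\e_0}$ to transport the frustration statement from $\a L^\rr_\lambda$ to $\a_+ L^\rr_{\lambda'}$, absorbing the small distortions into the factor $\sqrt{\a_+\a^{-1}}$ in the threshold and into the truncation parameter $\a_+\e_0$. First, for every sufficiently small $\e_0$ (so that in particular $\a(1+\e_1)\le\a_+$, where $\e_1=2\e_0\k_\de^{-1}$), I would extract three pointwise consequences of $E^*_{\e_0}$: (a) $V(\a L^\rr_\lambda)=V(\a_+L^\rr_{\lambda'})$, directly from the definition of $E^*_{\e_0}$; (b) every $u\in\Pi_\de$ with $L^\rr_{\lambda'}(u)>0$ in fact satisfies $L^\rr_{\lambda'}(u)>\e_0$, since otherwise $u\in Q\subset V(L^\rr_\lambda)=V(L^\rr_{\lambda'})$ would force $L^\rr_{\lambda'}(u)=0$; and (c) the pointwise bound $\a L^\rr_\lambda\le\a(1+\e_1)L^\rr_{\lambda'}\le\a_+L^\rr_{\lambda'}$, coming from $X^\lambda_\de\subset X^{\lambda'}_\de$. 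Parts~(i) and~(iii) of Lemma~\ref{gngnpCor} then yield, for all $\xi,\eta\in W_\de$ and $t\in I_\de$,
\[
D(\xi,\eta,\a_+L^\rr_{\lambda',t})\le D(\xi,\eta,\a L^\rr_{\lambda,t}),\qquad R(\xi,\eta,\a_+L^\rr_{\lambda',t})\le R(\xi,\eta,\a L^\rr_{\lambda,t}),
\]
uniformly in all four communication modes indexed by $\ttau_{\aa,\cc}$.

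Next I would treat the four coordinates of the threshold vector separately, according to whether $b_i=0$ or $b_i>0$. In the case $b_i=0$, the hypothesis $(\a L^\rr_\lambda)[\ttau_{\aa,\cc}\circ\imath](\Pi_\de)>\bb$ produces some $u$ with $L^\rr_\lambda(u)>0$ whose QoS trajectory lies strictly below $c_i$ under $\a L^\rr_\lambda$ for a total time exceeding $a_i$. The QoS monotonicity above promotes this to non-strict sub-$c_i$ QoS at $u$ under $\a_+L^\rr_{\lambda'}$ over the same total time, while (b) forces the truncation factor $\min\{1,(\a_+\e_0)^{-1}\a_+L^\rr_{\lambda'}(u)\}$ to equal one. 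Hence $\pi_i(\boldsymbol\Phi_{\a_+\e_0}(\cc,\a_+L^\rr_{\lambda'}))>a_i$, placing $\a_+L^\rr_{\lambda'}$ in $C_i(\aa,\sqrt{\a_+\a^{-1}}\bb,\cc,\a_+\e_0)$. In the case $b_i>0$, I would combine the elementary bound $\tau_{a_i,c_i}\le\tau^{\ms{usc}}_{a_i,c_i}$ (since $<c_i$ implies $\le c_i$), the QoS monotonicity, and the inequality $\a L^\rr_\lambda(u)\le(1+\e_1)(\a/\a_+)\,\a_+L^\rr_{\lambda'}(u)$ to chain
\[
b_i<\pi_i\bigl((\a L^\rr_\lambda)[\ttau_{\aa,\cc}\circ\imath]\bigr)\le (1+\e_1)\,\frac{\a}{\a_+}\,\pi_i\bigl((\a_+L^\rr_{\lambda'})[\ttau^{\ms{usc}}_{\aa,\cc}]\bigr),
\]
and then further shrink $\e_0$ so that $(1+\e_1)(\a/\a_+)\le\sqrt{\a/\a_+}$, which is possible because $\a<\a_+$; this gives $\pi_i((\a_+L^\rr_{\lambda'})[\ttau^{\ms{usc}}_{\aa,\cc}])\ge\sqrt{\a_+\a^{-1}}\,b_i$, as needed.

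The hard part is essentially administrative rather than conceptual: one must pick $\e_0$ small enough to meet simultaneously the three smallness requirements $\a(1+\e_1)\le\a_+$ (for the QoS comparison to apply), $L^\rr_{\lambda'}(u)>\e_0$ on occupied sites (so that the truncation factor is trivial), and $(1+\e_1)(\a/\a_+)\le\sqrt{\a/\a_+}$ (for the $b_i>0$ bound), and then take $\lambda_0$ large accordingly. The third constraint is what forces the particular softening factor $\sqrt{\a_+\a^{-1}}$ in the conclusion: it is precisely the geometric mean between $1$ and $\a_+/\a$ that leaves room to absorb the multiplicative perturbation $(1+\e_1)$ while still improving on the naive bound $\a_+/\a$.
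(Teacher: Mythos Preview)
Your proposal is correct and follows essentially the same approach as the paper's proof: both use the monotonicity from parts~(i) and~(iii) of Lemma~\ref{gngnpCor} (enabled by $V(L^\rr_\lambda)=V(L^\rr_{\lambda'})$ and the pointwise bound $\a L^\rr_\lambda\le\a_+L^\rr_{\lambda'}$ for small $\e_0$), split into the cases $b_i=0$ and $b_i>0$, and handle the former via the fact that occupied sites under $L^\rr_{\lambda'}$ carry mass at least $\e_0$ so the truncation factor in $\boldsymbol\Phi_{\a_+\e_0}$ is trivial. Your chain for $b_i>0$ is identical to the paper's bound $\pi_i((\a_+L^\rr_{\lambda'})[\ttau^{\ms{usc}}_{\aa,\cc}])\ge(\a_+\lambda)/(\a\lambda')\,b_i\ge\sqrt{\a_+\a^{-1}}\,b_i$, just rewritten with $\lambda'/\lambda=1+\e_1$; you are somewhat more explicit than the paper about the passage $\tau_{a_i,c_i}\le\tau^{\ms{usc}}_{a_i,c_i}$ and about enumerating the smallness requirements on $\e_0$.
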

\begin{proof}
First, recall that the event $E^*_{\e_0}$ guarantees that by passing from $\lambda$ to $\lambda'$ users can only be added along occupied trajectories. Hence, under the event $E^*_{\e_0}$ parts (i) and (iii) of Lemma~\ref{gngnpCor} give that
$$\bar D(u,v,\a _+L^\rr_{\lambda'})\le\bar D(u,v,\a  L_\lambda^\rr)\qquad\text{and}\qquad\bar R(u,v,\a _+L^\rr_{\lambda'})\le\bar R(u,v,\a  L_\lambda^\rr)$$
for all $u,v\in\Pi_\delta$ provided that $\e_0$ is sufficiently small. 
Therefore, under the event $E^*_{\e_0}$, $\pi_i((\a  L^\rr_\lambda)[\ttau_{\aa,\cc}\circ\imath](\Pi_\delta))>b_i$ implies that 
$$\pi_i((\a _+L^\rr_{\lambda'})[\ttau^{\ms{usc}}_{\aa,\cc}](\Pi_\delta))\ge\frac{\a _+\lambda}{\a \lambda'}b_i\ge \sqrt{\a _+\a ^{-1}}b_i.$$ 
In particular, $\a _+L^\rr_{\lambda'}\in C_i(\aa,\sqrt{\a _+\a ^{-1}}\bb,\cc,\a _+\e_0)$ if $b_i>0$.

For the case $b_i=0$, let $u\in \Pi_{\delta}$ with $L^\rr_\lambda(u)>0$ be arbitrary.
Since the event $E^*_{\e_0}$ occurs, we have $L_{\lambda'}^\rr(u)\ge\e_0$ and therefore
$$\min\{1,(\a _+\e_0)^{-1}\a _+L^\rr_{\lambda'}(u)\}=1.$$
Moreover, by parts (i) and (iii) of Lemma~\ref{gngnpCor}, we have that $\boldsymbol\Phi(\cc,\a _+L^\rr_{\lambda'})\ge\boldsymbol\Phi(\cc,\a L^\rr_{\lambda})$. Therefore,
$$\pi_i(\boldsymbol\Phi_{\a _+\varepsilon_0}(\cc,\a _+L^\rr_{\lambda'}))=\pi_i(\boldsymbol\Phi(\cc,\a _+L^\rr_{\lambda'}))\ge\pi_i(\boldsymbol\Phi(\cc,\a L^\rr_{\lambda}))>a_i,$$
i.e., $\a _+L^\rr_{\lambda'}\in C_i(\aa,\sqrt{\a _+\a ^{-1}}\bb,\cc,\a _+\e_0)$.
\end{proof}

Now, we can conclude the proof of Proposition~\ref{translLowThm}.
\begin{proof}[Proof of Proposition~\ref{translLowThm}]
First, by Lemmas~\ref{condProbLem2} and~\ref{tntnpLem},
\begin{align*}
\P((\a  L_\lambda^\rr)[\ttau_{\aa,\cc}](\Pi_\delta)>\bb)&\le\exp(\sqrt{\varepsilon_0}\lambda)\P(E^*_{\e_0}\cap\{\a _+L^\rr_{\lambda'}\in {\bf C}(\aa,\sqrt{\a _+\a ^{-1}}\bb,\cc,\a_+{\e_0})\})\\
&\le\exp(\sqrt{\varepsilon_0}\lambda)\P(\a _+L^\rr_{\lambda'}\in {\bf C}(\aa,\sqrt{\a _+\a ^{-1}}\bb,\cc,\a _+{\e_0})).
\end{align*}
In particular, combining the large deviation principle for $L^\rr_\lambda$ with the observation preceding Lemma~\ref{tntnpLem} yields
\begin{align*}
\limsup_{\lambda\to\infty}\tfrac1\lambda\log\P((\a  L^\rr_\lambda)[\ttau_{\aa,\cc}](\Pi_\delta)>\bb)&\le\sqrt\e_0-(1+2\e_0\kappa_{\delta}^{-1})\inf_{\substack{\nuu\in\mc{M}(\Pi_\delta)\\ (\a _+\nuu)\in{\bf C}(\aa,\sqrt{\a _+\a ^{-1}}\bb,\cc,\a _+{\e_0})}}h(\nuu|\muu^\rr)\\
&\le\sqrt\e_0-(1+2\e_0\kappa_{\delta}^{-1})\inf_{\substack{\nuu\in\mc{M}(\Pi_\delta)\\ (\a _+^2\a ^{-1}\nuu)[\ttau_{\aa,\cc}\circ\imath](\Pi_\delta)>\bb}}h(\nuu|\muu^\rr).
\end{align*}
Hence, after sending $\e_0$ to zero,
\begin{align*}
\limsup_{\lambda\to\infty}\tfrac1\lambda\log\P(L^\rr_\lambda[\ttau_{\aa,\cc}\circ\imath](\Pi_\delta)>\bb)\le-\inf_{\substack{\nuu\in\mc{M}(\Pi_\delta)\\ (\a _+^2\a ^{-1}\nuu)[\ttau_{\aa,\cc}\circ\imath](\Pi_\delta)>\bb}}h(\nuu|\muu^\rr).
\end{align*}
Furthermore, by Corollary~\ref{ent0Cor},
\begin{align*}
-\inf_{\substack{\nuu\in\mc{M}(\Pi_\delta)\\ (\a \nuu)[\ttau_{\aa,\cc}\circ\imath](\Pi_\delta)>\bb}}h(\a ^2\a _+^{-2}\nuu|\muu^\rr)&\le-(1-3\e')\inf_{\substack{\nuu\in\mc{M}(\Pi_\delta)\\ (\a \nuu)[\ttau_{\aa,\cc}\circ\imath](\Pi_\delta)>\bb}}h(\nuu|\muu^\rr)+3\e'\muu^\rr(\Pi_\delta),
\end{align*}
where $\e'>0$ is chosen such that $1-\e'=\a ^2\a _+^{-2}$.
Sending $\a _+$ to $\a $ completes the proof.
\end{proof}

Next, we can conclude the proof of Corollary~\ref{Cor1}.
\begin{proof}[Proof of Corollary~\ref{Cor1}]
Let $\e\in(0,1)$ be arbitrary. First, recall that Proposition~\ref{translUpThm} yields
$$\P(L_\lambda[\ttau_{\aa,\cc}](\LL)>\bb)\le\P(((1+\e)L^\rr_\lambda)[\ttau_{\aa,\cc}\circ\imath](\Pi_\delta)>\bb)$$
for all sufficiently small $\delta\in\mathbb{B}$. Hence, Proposition~\ref{translLowThm} implies that
$$\limsup_{\lambda\to\infty}\tfrac1\lambda\log\P(L_\lambda[\ttau_{\aa,\cc}](\LL)>\bb)\le -\inf_{\substack{\nuu\in\mc{M}(\Pi_\delta)\\ ((1+\e)\nuu)[\ttau_{\aa,\cc}\circ\imath](\Pi_\delta)>\bb}}h(\nuu|\muu^\rr).$$
Now, Lemma~\ref{rateFunLim} gives that
$$\lim_{\e\to0}\liminf_{\delta\to0}\inf_{\substack{\nuu\in\mc{M}(\Pi_\delta)\\ ((1+\e)\nuu)[\ttau_{\aa,\cc}\circ\imath](\Pi_\delta)>\bb}} h(\nuu|\muu^\rr)\ge\inf_{\substack{\nuu\in\mc{M}(\LL)\\ \nuu[\ttau_{\aa,\cc}](\LL)>\bb}}h(\nuu|\muu),$$
as required.
\end{proof}

Finally, we prove Corollary~\ref{Cor2}. 
\begin{proof}[Proof of Corollary~\ref{Cor2}]
First, by Lemma~\ref{gngnpCor} parts (ii) and (iv) and Proposition~\ref{translUpThm},  
$$\pi_i(((1+\tfrac\e2)\muu^\rr)[\ttau_{\aa,\cc}^{\ms{usc}}](\Pi_\delta))\le\pi_i(((1+\tfrac34\e)\muu^\rr)[\ttau_{\aa,\cc}\circ\imath](\Pi_\delta))\le b_i$$ 
holds for all sufficiently small $\delta\in\B$. Moreover, again by Proposition~\ref{translUpThm}, it suffices to show that 
\begin{align}
\label{cor2Eq0}
\limsup_{\lambda\to\infty}\tfrac1\lambda\log\P(((1+\tfrac\e3)L^\rr_\lambda)[\ttau_{\aa,\cc}^{\ms{usc}}](\Pi_\delta)>\bb)<0
\end{align}
holds for all sufficiently small $\delta\in\B$. Next, Proposition~\ref{translLowThm} reduces~\eqref{cor2Eq0} to the verification of 
$$\inf_{\substack{\nuu\in\mc{M}(\Pi_\delta)\\ ((1+\tfrac\e3)\nuu)[\ttau_{\aa,\cc}^{\ms{usc}}](\Pi_\delta)>\bb}}h(\nuu|\muu^\rr)>0.$$
In order to derive a contradiction, we assume that there exist $\nuu_k\in\mc{M}(\Pi_\delta)$ such that $((1+\tfrac\e3)\nuu_k)[\ttau_{\aa,\cc}^{\ms{usc}}](\Pi_\delta)>\bb$ and $\lim_{k\to\infty}h(\nuu_k|\muu^\rr)=0$.   
In particular, after passing to a subsequence, the lower semicontinuity of $h(\cdot|\muu^\rr)$ implies that the measures $\nuu_k$ converge weakly to $\muu^\rr$. Hence, the upper semicontinuity of the function $\nuu\mapsto\nuu[\ttau_{\aa,\cc}^{\ms{usc}}]$ implies that $((1+\tfrac\e3)\muu^\rr)[\ttau_{\aa,\cc}^{\ms{usc}}](\Pi_\delta)\ge \bb$.
If $b_i>0$, then this together with parts (i) and (iii) of Lemma~\ref{gngnpCor} implies that $\pi_i(((1+\tfrac\e2)\muu^\rr)[\ttau_{\aa,\cc}^{\ms{usc}}](\Pi_\delta))> b_i$. But this is a contradiction to our assumption $\pi_i(((1+\tfrac\e2)\muu^\rr)[\ttau_{\aa,\cc}^{\ms{usc}}](\Pi_\delta))\le b_i$. On the other hand, if $b_i=0$, then we apply the above argument with the u.s.c.~function $\nuu\mapsto\Phi_{\kappa_\delta}(\cc,(1+\tfrac\e3)\nuu)$. More precisely, since $\Phi_{\k_\delta}(\cc,\cdot)$ takes values in a discrete set, and since $\Phi_{\k_\delta}(\cc,(1+\tfrac\e3)\nuu_k)>a_i$, we conclude that also $\Phi_{\k_\delta}(\cc,(1+\tfrac\e3)\muu^\delta)>a_i$. In particular,
\begin{align*}
 \pi_i(\boldsymbol\Phi(\cc,(1+\tfrac\e2)\muu^\rr))\ge\pi_i(\boldsymbol\Phi(\cc,(1+\tfrac\e3)\muu^\rr))=\pi_i(\boldsymbol\Phi_{\kappa_\delta}(\cc,(1+\tfrac\e3)\muu^\rr))>a_i,
\end{align*}
where we used parts (i) and (iii) of Lemma~\ref{gngnpCor} in the first inequality. Hence we obtain a contradiction to the assumption $\pi_i(\boldsymbol\Phi((1+\tfrac\e2)\muu^\rr))\le a_i$.
\end{proof}

\section{Simulation results}
\label{simSec}
In this section we provide some simulation results complementing the large-deviation analysis developed above. We restrict ourselves to a setting without mobility, where the state space of the point processes is $W$ rather then $\LL$. Already in this static situation a number of surprising effects, for example symmetry breaking, can be observed. 
In this section, we consider the specific frustration event 
$$E_{\lambda,\cc,\bb}=\{L_{\lambda}[\ttau_{0,\cc}](W)>\bb\},$$
also considered in the Corollaries \ref{Cor1} and \ref{Cor2}, where the proportion of users with a bad QoS is unexpectedly high. Recall that in Theorem~\ref{LDP} four different types of communication are considered, direct uplink communication, direct downlink communication, relayed uplink communication and relayed downlink communication. As we will see in the numerical analysis, for the different types of communication, different effects can lead to a configuration being an element of $E_{\lambda,\cc,\bb}$. Since we only consider a static scenario, we write $\ttau_{\cc}=\ttau_{0,\cc}$ in the following.

Asymptotically, these configurations are characterized by the minimizers of the rate function presented in Corollary \ref{Cor1}. 
For a network operator it is interesting to identify the reasons or bottlenecks behind bad connection quality, so that specific action may be taken to reduce these effects. For a large number of users, i.e.~for large parameter $\lambda$, the minimizers can be used to obtain qualitative information about the behavior of the system in such cases. More specifically, the set of minimizers represents the typical user distributions in the frustration event.

Most prominently we can observe a certain breaking of rotational symmetry in all cases except for the direct uplink communication where the interference is only measured at the base station at the origin. 
This symmetry breaking is to be understood in the following sense. As is true in general, the set of minimizers of the rate function must exhibit the same symmetries as the underlying system. In particular, if the a priori density $\mu$ is rotationally invariant, this must be true for the set of minimizers representing the typical user distribution in the frustration event. Symmetry breaking here means the existence of at least one element in the set of minimizers which is rotationally non-symmetric.

\subsection{Direct uplink communication}
We assume that the a priori measure is given by the restriction of the Lebesgue measure $\mathcal{H}_2$ on the disk of radius $r$ centered at the origin. That is, 
$$\mu(\cdot)=\mathcal{H}_2(\cdot\cap B_r(o)).$$
First, let us denote the minimum direct-communication QoS $c_0$ of users distributed according to a Poisson point process with intensity measure $\lambda\mu$ in the high-density limit when $\lambda$ tends to infinity. 
This minimum quality of service is attained at the boundary of the disk and can be computed as 
\begin{align*}
c_0=\frac{\ell(r)}{\int_{B_r(o)}\ell(|x|)\d x}.
\end{align*}

Now, we consider the frustration event 
$$E_{\lambda,c,b}^{\ms{up-dir}}=\{L_{\lambda}^{\ms{up-dir}}[\tau_c](B_r(o))>b\},$$
which describes the existence of at least $\lambda\, b$ users in $B_r(o)$ that experience a connection quality that is less than $c$. Here the empirical measure of frustrated users for the case of direct communication is given by
$$L_{\lambda}^{\ms{up-dir}}[\tau_c]=\frac1\lambda\sum_{X_j\in X^{\lambda}}\one\{\SIR(X_j,o,L_\lambda)< c\}\delta_{X_j}.$$ 
According to Corollary~\ref{Cor1} the probability for the event $E_{\lambda,c,b}^{\ms{up-dir}}$ is exponentially decaying at a rate $\lambda J^{\ms{up-dir}}(c,b)$ where
$$J^{\ms{up-dir}}(c,b)=\inf_{\nu:\, \nu^{\ms{up-dir}}[\tau_c](B_r(o))>b}h(\nu|\mu).$$ 
Here we used the definition $\nu^{\ms{up-dir}}[\tau_c](B_r(o))=\int_{B_r(o)}\one\{\SIR(x,o,\nu)< c\}\nu(\d x)$. 
Now we show that all minimizers preserve the rotational symmetry in the direct uplink scenario.

\begin{Proposition}
\label{uniqMinProp}
Let $\mu$ be a rotation-invariant intensity measure on $B_r(o)$ that has a strictly positive density with respect to the Lebesgue measure on $B_r(o)$.
In the direct-communication case, all minimizers in $J^{\ms{up-dir}}$ are rotationally invariant.
\end{Proposition}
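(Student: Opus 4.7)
\textbf{Proof plan for Proposition~\ref{uniqMinProp}.} My strategy is a symmetrization argument based on the strict convexity of relative entropy. Suppose $\nu^{\ast}\in\mc{M}(B_r(o))$ is a minimizer of $J^{\ms{up-dir}}(c,b)$. I aim to show that if $\nu^{\ast}$ is not rotation-invariant, then averaging it over the group $SO(2)$ produces a feasible measure with strictly smaller entropy, contradicting minimality.

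Write $\nu_g=\nu^{\ast}\circ g^{-1}$ for $g\in SO(2)$ and let $\nu^{\ms{sym}}=\int_{SO(2)}\nu_g\,\d g$ denote the Haar average. The key structural feature of the direct-uplink case is that $\SIR(x,o,\nu)=\ell(|x|)/\nu(\ell(|\cdot|))$, whose denominator is a rotation-invariant scalar functional of $\nu$, so that
$$\nu_g(\ell(|\cdot|))=\nu^{\ast}(\ell(|\cdot|))=\nu^{\ms{sym}}(\ell(|\cdot|)).$$
Consequently the bad set
$$F(\nu):=\{x\in B_r(o):\SIR(x,o,\nu)<c\}=\{x\in B_r(o):\ell(|x|)<c\,\nu(\ell(|\cdot|))\}$$
is radially symmetric and satisfies $F(\nu^{\ms{sym}})=F(\nu_g)=F(\nu^{\ast})$. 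Since $F(\nu^{\ast})$ is itself rotation-invariant, $\nu_g(F(\nu^{\ast}))=\nu^{\ast}(F(\nu^{\ast}))$ for every $g$, and hence
$$\nu^{\ms{sym},\ms{up-dir}}[\tau_c](B_r(o))=\nu^{\ms{sym}}(F(\nu^{\ast}))=\nu^{\ast}(F(\nu^{\ast}))=\nu^{\ast,\ms{up-dir}}[\tau_c](B_r(o))>b,$$
so $\nu^{\ms{sym}}$ is feasible.

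The second step is strict convexity of the entropy. Any minimizer $\nu^{\ast}$ is absolutely continuous with respect to $\mu$, with some density $f$; the rotation-invariance of $\mu$ then gives that $\nu_g$ has density $f\circ g^{-1}$ and $\nu^{\ms{sym}}$ has density $f^{\ms{sym}}(x)=\int_{SO(2)}f(g^{-1}x)\,\d g$. Writing $h(\nu|\mu)=\int(f\log f-f+1)\,\d\mu$ and applying Jensen's inequality pointwise for the strictly convex function $u\mapsto u\log u-u+1$, followed by Fubini and the rotation-invariance of $\mu$, yields
$$h(\nu^{\ms{sym}}|\mu)\le\int_{SO(2)}h(\nu_g|\mu)\,\d g=h(\nu^{\ast}|\mu),$$
with strict inequality unless $f$ is almost everywhere rotation-invariant. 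Since $\mu$ has strictly positive density on $B_r(o)$, this forces $\nu^{\ast}$ itself to be rotation-invariant.

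The main subtlety is the first step: verifying that the constraint $\nu^{\ms{up-dir}}[\tau_c](B_r(o))>b$ is preserved under rotational averaging, even though this functional is nonlinear in $\nu$ through the self-referential definition of $F(\nu)$. The argument goes through only because with the base station at the origin in the direct-uplink case, the interference reduces to a rotation-invariant scalar, making $F(\nu)$ a radially symmetric annulus. This invariance fails for relayed communication and for downlink, consistent with the symmetry-breaking reported numerically in those cases.
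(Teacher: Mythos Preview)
Your argument is correct and is essentially the same as the paper's: both replace a putative minimizer by its rotational symmetrization, observe that the direct-uplink constraint depends on $\nu$ only through the rotation-invariant scalar $\nu(\ell(|\cdot|))$ so that feasibility is preserved, and then use Jensen's inequality for $u\mapsto u\log u$ to obtain a strict entropy drop unless the density was already radial. The only cosmetic difference is that you phrase the symmetrization as a Haar average over $SO(2)$ and apply Jensen pointwise followed by Fubini, whereas the paper writes the circle average explicitly and invokes the coarea formula to disintegrate the entropy integral over radii before applying Jensen on each sphere; these are the same computation in different notation.
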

\begin{proof}
Let $f(r)$ denote the radial density of $\mu$ with respect to the restriction of $\mathcal{H}_2$ on $B_r(o)$. Further, let $\nu\in \mathcal{M}(B_r(o))$ be absolutely continuous with respect to $\mu$, where the density will be denoted by $g$. Then, we define a new measure  $\nu'\in\mathcal{M}(B_r(o))$ whose density $g'$ w.r.t.~$\mu$ is given by 
$$g'(x)=g'(|x|)=\frac{1}{2\pi|x|}\int_{\partial B_{|x|}(o)}g(y)\mathcal{H}_1(\d y)$$
where $\mathcal{H}_1$ denotes the one-dimensional Hausdorff measure. 
Note that 
\begin{align}\label{CondMin}
\nu^{\ms{up-dir}}[\tau_c](B_r(o))=(\nu')^{\ms{up-dir}}[\tau_c](B_r(o)).
\end{align}
We claim that if $\nu$ is not rotation invariant, i.e., 
$\mathcal{H}_2(\{g\neq g'\})>0$, then
\begin{align}\label{CondEnt}
\int_{B_r(o)}g(x)f(|x|)\log g(x)\d x>\int_{B_r(o)}g'(x)f(|x|)\log g'(x)\d x.
\end{align}
But this, together with \eqref{CondMin}, would imply that $\nu$ can not be a minimizer of  $J^{\ms{up-dir}}$. 
In order to show \eqref{CondEnt}, let 
$$\mathcal{R}=\{0<s\le r:\; \text{ there exists }x\in \partial B_s(o) \text{ with }g(x)\ne g'(x)\}$$
 be the set of radii such that $g$ is not constant on $\partial B_s(o)$. Note that also 
$$\mathcal{H}_2(\{x\in B_r(o):\, |x|\in\mathcal{R}\})\ge \mathcal{H}_2(\{g\ne g'\})>0.$$
Then, by the coarea formula, which allows the disintegration into radii and angles (see~\cite{coarea}), it suffices to show that 
$$\frac{1}{2\pi s}\int_{\partial B_s(o)}g(x)\log g(x)\mathcal{H}_1(\d x)>g'(s)\log g'(s),$$
for all $s\in\mathcal{R}$, where $\mc{H}_1$ denotes the $1$-dimensional Hausdorff measure.
This is equivalent to 
$$\frac{1}{2g'(s)\pi s}\int_{\partial B_s(o)}g(x)\log g(x)\mathcal{H}_1(\d x)>\log g'(s),$$
for all $s\in\mathcal{R}$. Now, using the convexity of the function $-\log$, an application of Jensen's inequality shows that 
\begin{align*}
-\frac{1}{2g'(s)\pi s}\int_{\partial B_s(o)}g(x)\log \frac{1}{g(x)}\mathcal{H}_1(\d x)\ge\log g'(s),
\end{align*}
where equality occurs if and only if $g(x)$ is almost surely constant on $\partial B_s(o)$. 
\end{proof}

Next, we provide an approximate description of the minimizers in the direct uplink communication case. 
First note that the decay of the path-loss function implies that it is entropically more efficient to increase the interference at the origin by placing more users close to the base station. Second, if the interference at the origin is held fixed, then the SIR decays with the distance of the user to the base station. Hence, users with bad QoS will be located at the boundary rather than the center of the cell. The idea for the approximation is the following. We fix a radius $r\ge\r\ge0$ and compute the minimizer of the relative entropy under the constraint that a given SIR-threshold $c$ is met precisely at radius $\r$. In particular, this implies that in the region $\{x\in B_r(o):\, |x|> \r\}$ every user is disconnected. In order to achieve the desired proportion of disconnected users $b$ we use a flat profile in the outer annulus, since it is entropically favorable. The optimization has to be performed now over the radius $\r$ to balance the entropic costs of creating interference at the origin and increasing the number of disconnected users in the outer annulus.

Let $\mu$ be again the two-dimensional Lebesgue measure restricted to $B_r(o)$. Using variational calculus, as presented for example in \cite{gelfand}, we derive an expression for minimizers of $$\inf_{\nu:\, \SIR((\r,0),o,\nu)=c}h(\nu|\mu)$$ 
where $r\ge\r\ge0$ is a prescribed radius. The constraint under the infimum forces $\nu$ to have precisely $\ell(\r)/c$ interference at the origin. As we have seen before, we can assume that a minimizer $\nu$ has a radial symmetric density $f$ w.r.t.~$\mu$. Since $\nu$ is an extremal point of $h$ under the constraint 
\begin{align*}
\int_0^r2\pi sf(s)\ell(s)\d s=\frac{\ell(\r)}{c}
\end{align*}
using \cite[Section 12, Theorem 1]{gelfand}, there exists a constant $\a_\r$ such that the minimizing density has the form
$$f_\r(s)=\exp(\a_\r\ell(s)).$$
Using this density in the region $\{x\in B_r(o):\, |x|\le\r\}$ creates entropic costs of the form 
\begin{align*}
\gamma_{\text{int}}(\r)&=h(\nu{\mathord{\upharpoonright}_{B_\r(o)}}|\mu{\mathord{\upharpoonright}_{B_\r(o)}})=2\pi\int_0^\r sf_\r(s)[\log f_\r(s)-1]\d s+\r^2\pi\cr
&= 2\pi\int_0^\r se^{\a_\r\ell(s)}[\a_\r \ell(s)-1]\d s+\r^2\pi.
\end{align*}
In order to have $b$ users in the outer annulus using a flat profile, the density must be $b/(\pi(r^2-\r^2))$. The entropic costs of using this density in the outer annulus are given by 
\begin{align*}
\gamma_{\text{out}}(\r)&=h(\nu{\mathord{\upharpoonright}_{B_r(o)\setminus B_\r(o)}}|\mu{\mathord{\upharpoonright}_{B_r(o)\setminus B_\r(o)}})\cr
&=2\pi\int_\r^r\frac{sb}{\pi(r^2-\r^2)}[\log\frac{b}{\pi(r^2-\r^2)}-1]\d s+\pi(r^2-\r^2)\cr
&=b\log\frac{b}{\pi(r^2-\r^2)}-b+\pi(r^2-\r^2).
\end{align*}
Hence, 
we need to numerically compute $\a_\r$ from the equation $\int_0^r2\pi s e^{\a_\r\ell(s)}\ell(s)\d s=\frac{\ell(\r)}{c}$ and then optimize
\begin{align*}
\r\mapsto 2\pi\int_0^\r se^{\a_\r\ell(s)}[\a_\r\ell(s)-1]\d r-b\log[\pi(r^2-\r^2)].
\end{align*}
For the minimizing value $\r_{\text{min}}$ this leads to an approximate minimizing density of the form 
$$f(s)=e^{\a_{\r_{\text{min}}}\ell(s)}\one_{s\le \r_{\text{min}} }+\frac{b}{\pi(r^2-\r_{\text{min}}^2)}\one_{r\ge s> \r_{\text{min}}}.$$
In Figure~\ref{Radii}, we compare this density with the density observed by numerical simulations in a specific parameter set. This is the content of the next paragraph.
\begin{figure}[!htpb]
\centering
\includegraphics[width=0.4\textwidth]{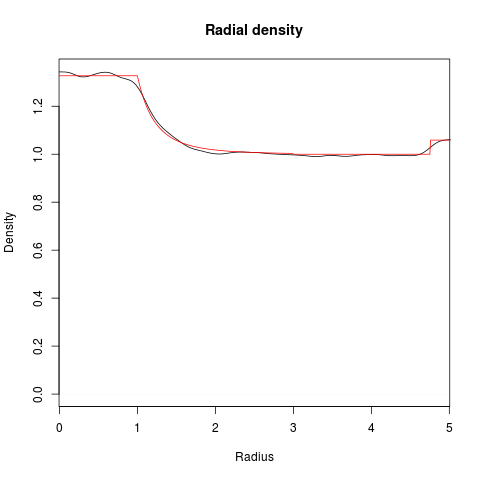}
\caption{Plot of the spatial intensity as a function of the radius in black. The corresponding approximation result is shown in red.}
\label{Radii}
\end{figure}

\medskip
In order to compare the asymptotic results, as $\lambda$ tends to infinity, with situations with finitely many users we present here some numerical simulations. Let us fix $r=5$, $\ell(s)=\min\{1,s^{-4}\}$, $b=0.1$, $\lambda=50$ and $c=c_0$
and consider the event $E_{50,c_0,0.1}^{\ms{up-dir}}$. In Figure~\ref{DirectSim} we present two realizations of the process where the left one is a typical configuration of users and the right one is a configuration which is an element of $E_{50,c_0,0.1}^{\ms{up-dir}}$. In accordance with Proposition \ref{uniqMinProp}, in the rare configuration, no breaking of the rotational symmetry can be observed. Under further inspection, a slightly higher intensity of users close to the origin can be detected. This higher intensity has the effect to create more interference around the origin leading to a \textit{screening} effect. In such a situation it is more likely for users to be unable to communicate with the base station. 

\begin{figure}[!htpb]
\centering
\includegraphics[width=0.4\textwidth]{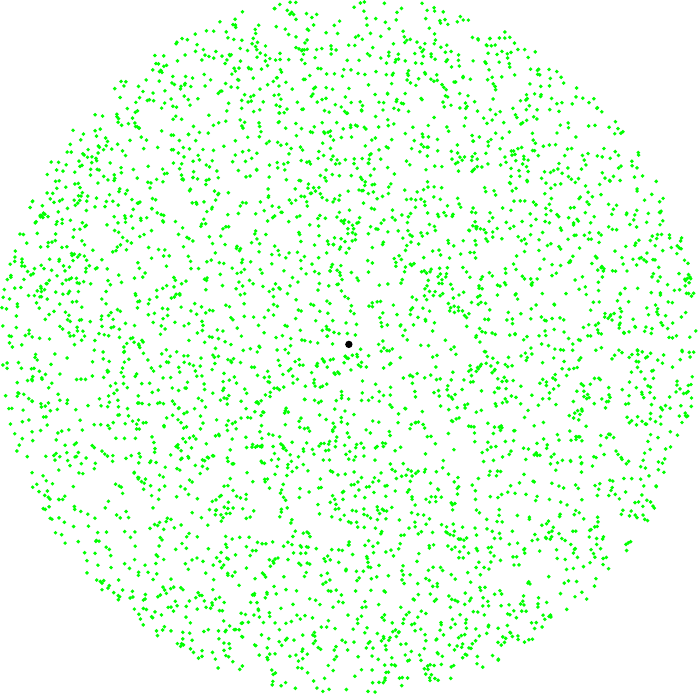}
\hspace{1.2cm}
\includegraphics[width=0.4\textwidth]{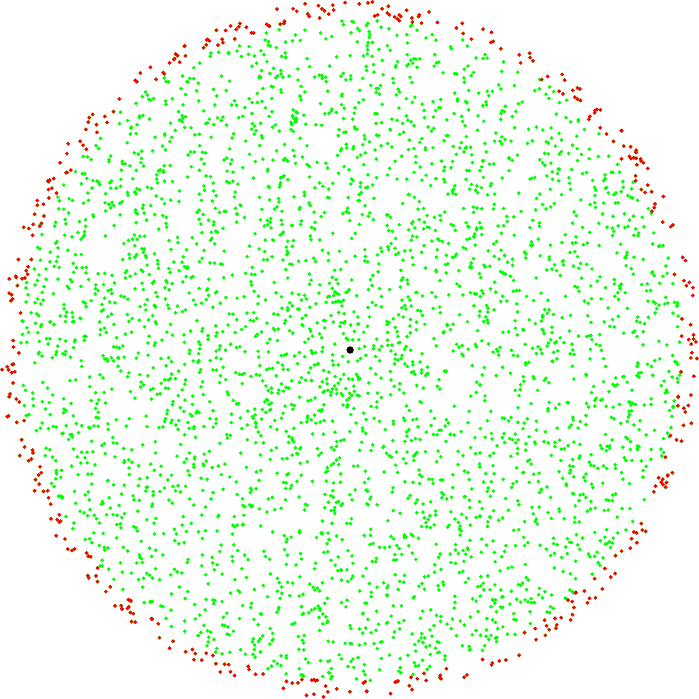}
\caption{Realizations of the direct communication network model. The typical realization on the left side shows no points unable to communicate with the base station at the origin. In the rare realization on the right side, the red users are unable to communicate with the base station.}\label{DirectSim}  
\end{figure}

Besides screening there is another possibility for the process to create an unexpectedly large number of users with bad connection quality. That is, to increase the number of users close to the boundary of the disk, since they become \textit{isolated} more easily. 

We have performed $N=10^8$ simulation runs. Using these simulations we obtained an estimate probability of the event $E_{50,c_0,0.1}^{\ms{up-dir}}$ given by approximately $1.8\times10^{-6}$. 
The black line in Figure~\ref{Radii} shows the radial intensity after performing a kernel-density estimate. In particular, we see that the intensity is substantially larger close to the origin, accounting for the screening effect, and the intensity is larger close to the cell boundary, accounting for isolation.

We want to mention that the plot of the density function in Figure~\ref{Radii} depends to a certain extent on the parameters of the kernel-density estimates. In particular, since there can not be any users below zero and above $5$, the kernel-density estimate tends to obscure the actual observations very close to the boundaries. In the plot we compensate for these effects by first mirroring users in our observations at the boundaries and then applying the kernel-density estimates. 
Another practical issue we want to address is the following. For the radial density plot, every observed user at radius $s$ must be given a weight proportional to $1/s$. This leads to a certain instability very close to the origin.

\subsection{Direct downlink communication}
Also in this case, we want to analyze configurations in the rare event, where an unexpected proportion of users experiences a bad QoS. More precisely we look at the event 
$$E_{\lambda,c,b}^{\ms{do-dir}}=\{L_{\lambda}^{\ms{do-dir}}[\tau_c](B_r(o))>b\},$$
which describes the existence of at least $\lambda\, b$ users in $B_r(o)$ that experience a connection quality that is less than $c$. 
In the direct downlink case, under a flat a priori intensity $\mu$, the expected minimum QoS is not necessarily attained at the boundary of the disk $B_r(o)$, but close to the boundary. This is due to the fact, that for users at the boundary, although the numerator in the SIR is minimum, there are fewer users in the vicinity, so that the expected interference for users away from the boundary is higher. Hence, the denominator of the SIR is not minimal for users at the boundary and the two competing effects balance each other at some radius $0<s_0<r$.

Set $r=5$ and $\ell(s)=\min\{1,s^{-4}\}$. The expected minimum QoS is close to the expected QoS $c_0$ at the boundary, which is much easier to compute
\begin{align*}
c_0&=\frac{5^{-4}}{\int_{B_5(o)} \min\{1,|x-5e_1|^{-4}\} \d x}
\approx 5.5\times 10^{-4}.
\end{align*}
In $E_{\lambda,c,b}^{\ms{do-dir}}$ we set $c=0.91c_0$ to compensate for the non-minimality of $c_0$ and for the fact, that in our simulations we have to work with a finite intensity $\lambda$, but $c_0$ is a limiting quantity as $\lambda$ tends to infinity. In particular for large enough $\lambda$ and $b>0$, $E_{\lambda,c,b}^{\ms{do-dir}}$ is a rare event.

In Figure~\ref{DirectDoSim} we present two realizations of the process where the left one is a typical configuration of users and the right one is a configuration which is an element of $E_{100,c,0.02}^{\ms{do-dir}}$. In the atypical configuration, the group of users with bad QoS forms a single localized cluster at the cell boundary. 
Here bad QoS is a consequence of a mutual screening effect due to too much interference at the user locations.
This indicates a breaking of rotational invariance in the set of minimizers. Heuristically multiple minimizers, ordered by the angle of the cluster of users with bad QoS, can be constructed in the following way. Let $X$ be a realization and $\s(X)$ the angle of the cluster centroid. Further let $f_{\a}:\R^2\to\R^2$ be the rotation by the angle $\a$, then $f_{-\s(X)+\a}(X)$ normalizes the angle of the cluster centroid of $X$ to $\a$. Now let $\nu$ be a minimizer, then the rotational invariance of the objective function implies that also $\nu_\a=\nu(f^{-1}_{-\s(\cdot)+\a}(\cdot))$ is a minimizer for all angles $\a$. But since $\nu_\a(\s)=\a$, we have constructed multiple minimizers indexed by $\a\in(0,2\pi]$.
\begin{figure}[!htpb]
\centering
\includegraphics[width=0.4\textwidth]{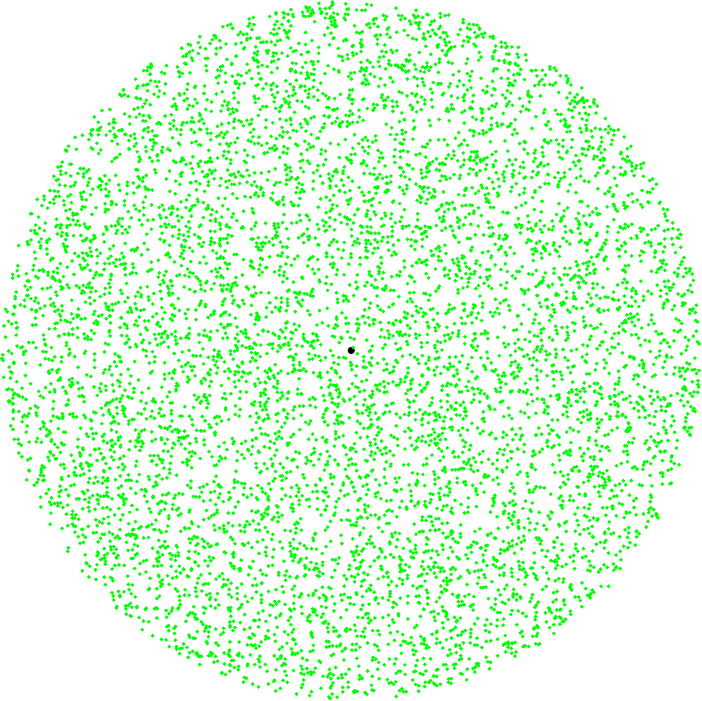}
\hspace{1.2cm}
\includegraphics[width=0.4\textwidth]{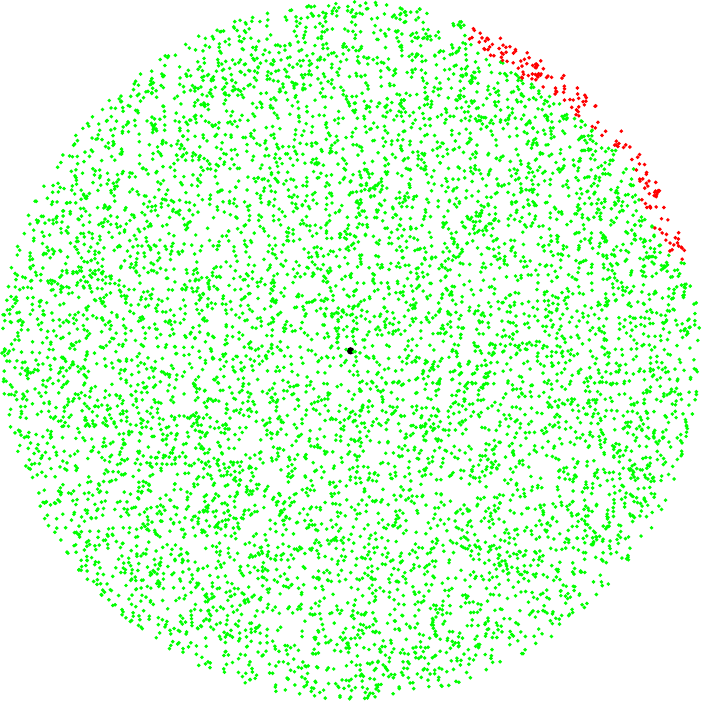}
\caption{Realizations of the direct downlink communication network model. All users and the base station contribute to the interference. Only green users can be reached directly from the base station.}
\label{DirectDoSim} 
\end{figure}

\subsection{Relayed communication}
As illustrated by our simulations discussed above, in direct-communication networks the event of observing a large number of frustrated users who cannot attain a desired QoS is often caused by a large number of users close to the cell boundary. 
Indeed, the path-loss of those users is so pronounced that even a small increase of the interference at the target can lead to the event of not achieving the desired QoS. In case of relayed communication there can be similar effects observed as in direct communication, but new phenomena also arise. Still the effect are similar in case of relayed up- and relayed downlink, so we will focus only on the uplink case.
As in the case of direct communication, a way to create configurations in the frustration event
$$E_{\lambda,c,b}^{\ms{up}}=\{L_{\lambda}[\tau_c](B_r(o))>b\},$$
where the parameters $c$ and $b$ are chosen appropriately, is to screen the origin or to increase isolated users far away from the origin. 

In contrast to the direct-communication case, in relayed communications, bad QoS can be a consequence of the absence of relays. This can be seen in the following setup using simulations. Assume the a priori density $\mu$ to be rotational invariant with high density in a small circle around the origin and close to the boundary of the disk. Additionally, assume that $\mu$ is zero everywhere else except for a small strip at approximately half the radius of the disk, here it is positive but low. Users close to the boundary are too far away from the base station to establish direct communication but can connect via users in the strip serving as relays. The left image in Figure~\ref{SymBreak} shows a typical realization according to $\lambda\mu$. In order to see a configuration in $E_{\lambda,c,b}^{\ms{up}}$ it is entropically not very costly to avoid placing users in the strip, and even cheaper to do that in a small section of the strip only. Such a realization is shown on the right side of Figure~\ref{SymBreak}.

\begin{figure}[!htpb]
               \centering
\includegraphics[width=0.4\textwidth]{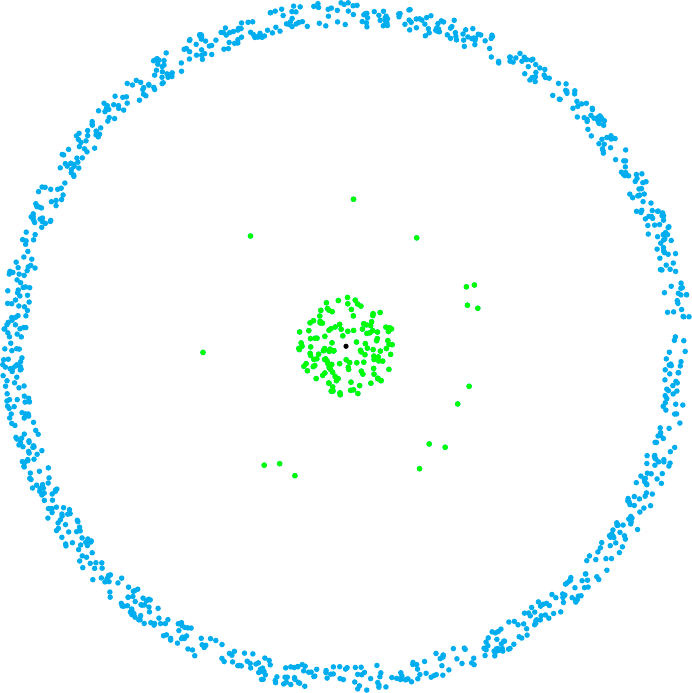}
\hspace{1.2cm}
\includegraphics[width=0.4\textwidth]{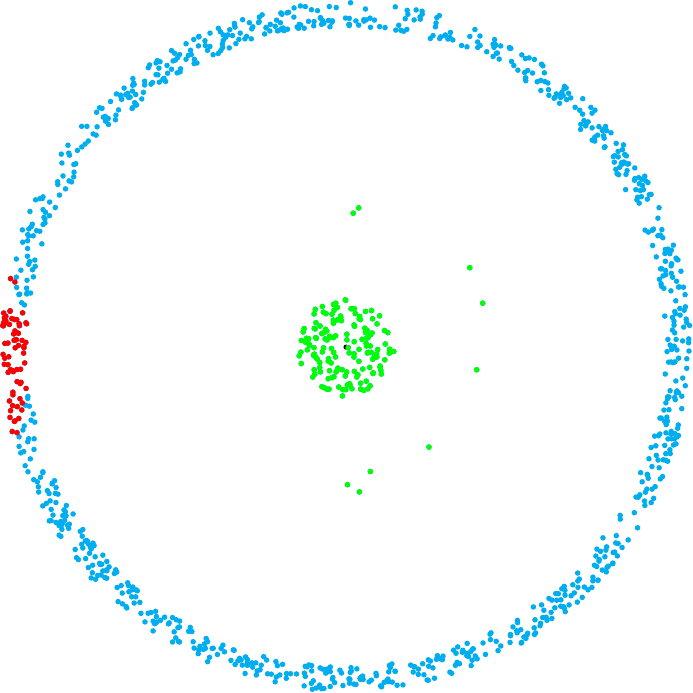}
\caption{Realizations of the relayed communication network model for an inhomogeneous intensity $\mu$. The typical realization on the left side shows green points, able to communicate with the base station at the origin and blue points, able to communicate with the base station using green points as relays. 
In the rare realization on the right side, the asymmetrically distributed red users are unable to communicate with the base station due to missing relays in the strip.}
\label{SymBreak}
\end{figure}

\section*{Acknowledgments}
This research was supported by the Leibniz program Probabilistic Methods for Mobile Ad-Hoc Networks. The authors thank W.~K\"onig for interesting discussions and comments.

\bibliography{../wias}
\bibliographystyle{abbrv}

\end{document}